\providecommand\@dotsep{5}
\def\listtodoname{List of Todos}
\def\listoftodos{\@starttoc{tdo}\listtodoname}
\numberwithin{equation}{section}
\newcommand{\R}{\mathbb{R}}
\DeclareMathOperator{\X}{\mathbb{H}}
\DeclareMathOperator{\e}{\varepsilon}
\newcommand{\N}{\mathcal{N}}
\DeclareMathOperator{\J}{\mathcal{J}}
\newtheorem{theorem}{Theorem}[section]
\newtheorem{corollary}{Corollary}
\newtheorem{lemma}[theorem]{Lemma}
\newtheorem{proposition}{Proposition}
\theoremstyle{definition}
\newtheorem{definition}[theorem]{Definition}
\newtheorem{remark}{Remark}
\title[Critical fractional Schr\"odinger systems]{Concentration phenomena for critical fractional Schr\"odinger systems} 
\author[V. Ambrosio]{Vincenzo Ambrosio}
\address{
Vincenzo Ambrosio\hfill\break\indent 
Dipartimento di Scienze Pure e Applicate (DiSPeA),\hfill\break\indent
Universit\`a degli Studi di Urbino `Carlo Bo'\hfill\break\indent
Piazza della Repubblica, 13\hfill\break\indent
61029 Urbino (Pesaro e Urbino, Italy)}
\email{vincenzo.ambrosio@uniurb.it}
\subjclass{Primary: 35A15, 35J50; Secondary: 35R11, 58E05.}
\keywords{Fractional Schr\"odinger systems, variational methods, critical exponent.}
\begin{document}
\maketitle


\begin{abstract}
In this paper we study the existence, multiplicity and concentration behavior of solutions for the following critical fractional Schr\"odinger system
\begin{equation*}
\left\{
\begin{array}{ll}
\e^{2s} (-\Delta)^{s}u+V(x)u=Q_{u}(u, v)+\frac{1}{2^{*}_{s}}K_{u}(u, v) &\mbox{ in } \R^{N}\\
\e^{2s} (-\Delta)^{s}u+W(x)v=Q_{v}(u, v)+\frac{1}{2^{*}_{s}}K_{v}(u, v) &\mbox{ in } \R^{N} \\
u, v>0 &\mbox{ in } \R^{N},
\end{array}
\right.
\end{equation*}
where $\e>0$ is a parameter, $s\in (0, 1)$, $N>2s$,  $(-\Delta)^{s}$ is the fractional Laplacian operator, $V:\R^{N}\rightarrow \R$ and $W:\R^{N}\rightarrow \R$ are positive H\"older continuous potentials, $Q$ and $K$ are homogeneous $C^{2}$-functions having subcritical and critical growth respectively.\\
We relate the number of solutions with the topology of the set where the potentials $V$ and $W$ attain their minimum values. The proofs rely on the Ljusternik-Schnirelmann theory and variational methods.
\end{abstract}

\section{Introduction}
\noindent
During the last years there has been a renewed and increasing interest in the study of nonlocal diffusion problems, in particular to the ones driven by the fractional Laplace operator, not only for a pure academic interest, but also for the several applications in different fields, such as, among the others, the thin obstacle problem, optimization, finance, phase transitions, anomalous diffusion, crystal dislocation, conservation laws, ultra-relativistic limits of quantum mechanics and water waves. For an elementary introduction to this topic we refer the interested reader  to \cite{DPV, MBRS}.\\
In this paper we deal with the following class of nonlinear fractional Schr\"odinger systems 
\begin{equation}\label{P}
\left\{
\begin{array}{ll}
\e^{2s} (-\Delta)^{s}u+V(x)u=Q_{u}(u, v)+\frac{1}{2^{*}_{s}}K_{u}(u, v) &\mbox{ in } \R^{N}\\
\e^{2s} (-\Delta)^{s}u+W(x)v=Q_{v}(u, v)+\frac{1}{2^{*}_{s}}K_{v}(u, v) &\mbox{ in } \R^{N} \\
u, v>0 &\mbox{ in } \R^{N},
\end{array}
\right.
\end{equation}
where $\e>0$ is a parameter, $s\in (0, 1)$, $N>2s$, $2^{*}_{s}=\frac{2N}{N-2s}$ is the fractional critical Sobolev exponent, $Q$ and $K$ are homogeneous $C^{2}$-functions having subcritical and critical growth respectively.\\
The fractional Laplacian operator $(-\Delta)^{s}$ may be defined for any $u: \R^{N}\rightarrow \R$ belonging to the Schwartz space $\mathcal{S}(\R^{N})$ of rapidly decaying functions, by setting 
$$
(-\Delta)^{s}u(x)=-\frac{1}{2}C(N, s)\int_{\R^{N}} \frac{u(x+y)+u(x-y)-2u(x)}{|y|^{N+2s}} \,dy \quad \forall x\in \R^{N},
$$
where 
$$
C(N, s)=\left(\int_{\R^{N}} \frac{1-\cos x_{1}}{|x|^{N+2s}} \,dx\right)^{-1};
$$
see \cite{BucurV, DPV, Silvestre, Stein} for more details. 
From a probabilistic point of view, the fractional Laplacian may be viewed as the infinitesimal generator of L\'evy stable diffusion processes  \cite{App}.\\
Problem \eqref{P} arises in the study of the solitary wave solutions $\psi_{1}(t, x)=e^{-\frac{\imath c_{1} t}{\e}} u(x)$ and $\psi_{2}(t, x)=e^{-\frac{\imath c_{2} t}{\e}} v(x)$  of time-dependent coupled fractional nonlinear Schr\"odinger equations
\begin{equation}\label{TDSS}
\left\{
\begin{array}{ll}
\imath \frac{\partial \psi_{1}}{\partial t}= \e^{2s}(-\Delta)^{s}\psi_{1}+\tilde{V}(x)\psi_{1}-f(\psi_{1}, \psi_{2})  &x\in\R^{N}, t>0\\
\imath \frac{\partial \psi_{2}}{\partial t}= \e^{2s}(-\Delta)^{s}\psi_{2}+\tilde{W}(x)\psi_{1}-g(\psi_{1}, \psi_{2}) & x\in\R^{N}, t>0
\end{array}
\right.
\end{equation}
where $\tilde{V}$ and $\tilde{W}$ are external potentials, $f$ and $g$ are suitable nonlinearities, and $\e$ is a sufficiently small parameter which corresponds to the Planck constant.\\
We recall that the time-dependent fractional Schr\"odinger equation
\begin{equation*}
\imath \frac{\partial \psi}{\partial t}= \e^{2s}(-\Delta)^{s}\psi+\tilde{V}(x)\psi-f(\psi) \mbox{ for } x\in\R^{N}, t>0
\end{equation*}
was introduced by Laskin \cite{Laskin1, Laskin2} and it is a fundamental equation of fractional quantum mechanics in the study of particles on stochastic fields modeled by L\'evy processes. For more physical background we refer to \cite{A, DDPDPV, DDPW, FQT, Secchi}.
 
When $u=v$ and $K(u, u)=|u|^{2^{*}_{s}}$, the system \eqref{P} reduces to a fractional critical Schr\"odinger equation of the type
\begin{align}\label{CFSE}
 \e^{2s}(-\Delta)^{s} u+V(x)u=f(x, u)+|u|^{2^{*}_{s}-2}u \mbox{ in } \R^{N},
\end{align}
which is currently actively studied by many authors.\\
Shang and Zhang \cite{SZ} proved the existence of a nonnegative ground state solution to \eqref{CFSE} with $f(x, u)=\lambda f(u)$
and they investigated the relation between the number of solutions and the topology of the set where $V$ attains its minimum, for $\lambda$ sufficiently large and $\e$ small enough. 
Zhang et al. \cite{XZR} studied, via the principle of concentration compactness in the fractional Sobolev space and minimax arguments, the existence of nontrivial radially symmetric solutions to \eqref{CFSE} with $\e=1$, when $V(x)$ is radially symmetric, and $f(x, u)=k(x)f(u)$, where $k$ is a bounded radially symmetric function and $f$ has a subcritical growth.
Teng \cite{teng} obtained the existence of a nontrivial ground state for a nonlinear fractional Schr\"odinger-Poisson system with critical Sobolev exponent, by using the method of Nehari manifold, the monotonic trick and a global compactness Lemma. 
By applying variational methods and Ljusternik-Schnirelmann theory, He and Zou \cite{HZ} proved existence and multiplicity of solutions to \eqref{CFSE} under a local condition on the potential $V$.\\
Dipierro et al. \cite{DMPV} used the Lyapunov-Schmidt reduction to obtain some bifurcation results for \eqref{CFSE} with $f(x, u)=\e h(x) u^{q}$ with $q\in (0, 2^{*}_{s}-1)$ and $h$ is a continuous and compactly supported function.
Fiscella and Pucci \cite{FP} dealt with Kirchhoff  type equations, driven by the $p$-fractional Laplace operator, involving critical Hardy-Sobolev nonlinearities and nonnegative potentials.\\
Motivated by the above papers, in this work we focus our attention on  the multiplicity and the concentration of solutions to the critical fractional system \eqref{P}. 
We recall that a solution $(u_{\e}, v_{\e})$ to \eqref{P} concentrates at some point $x_{0}\in \R^{N}$ as $\e\rightarrow 0$ provided 
\begin{align*}
\forall \delta>0 \quad \exists \e_{0}, R>0 \, : \,  |(u_{\e}(x), v_{\e}(x))|\leq \delta, \quad \forall \, |x-x_{0}|\geq \e R, \, \e\in (0, \e_{0}). 
\end{align*}
By using suitable variational methods, we will relate the number of solutions to \eqref{P} with the topology of the set where the potentials $V$ and $W$ attain their minimum values. 
In order to achieve our aim, along the paper we will suppose that the potentials $V:\R^{N}\rightarrow \R$ and $W:\R^{N}\rightarrow \R$ are H\"older continuous functions, and there exist $\Lambda\subset \R^{N}$, $x_{0}\in \R^{N}$ and $\rho_{0}>0$ such that:
\begin{compactenum}[$(H1)$]
\item $V(x), W(x)\geq \rho_{0}$ for any $x\in \partial \Lambda$;
\item $V(x_{0}), W(x_{0})< \rho_{0}$;
\item $V(x)\geq V(x_{0})>0$, $W(x)\geq W(x_{0})>0$ for any $x\in \R^{N}$.
\end{compactenum}
Now, we state the assumptions on the functions $Q(u, v)$ and $K(u, v)$.
We assume that $Q\in C^{2}(\R^{2}_{+}, \R)$, where $\R^{2}_{+}=[0, \infty)\times [0, \infty)$, verifies the following conditions:
\begin{compactenum}[$(Q1)$]
\item there exists $p\in (2, 2^{*}_{s})$ such that $Q(t u, tv)=t^{p}Q(u, v)$ for all $t>0$, $(u, v)\in \R^{2}_{+}$;
\item there exists $C>0$ such that $|Q_{u}(u, v)|+|Q_{v}(u, v)|\leq C(u^{p-1}+v^{p-1})$ for all $(u, v)\in \R^{2}_{+}$;
\item $Q_{u}(0, 1)=0=Q_{v}(1, 0)$;
\item $Q_{u}(1, 0)=0=Q_{v}(0, 1)$;
\item $Q(u, v)>0$ for any $u, v>0$;
\item $Q_{u}(u, v), Q_{v}(u, v)\geq 0$ for all $(u, v)\in \R^{2}_{+}$.
\end{compactenum}
Regarding the function $K\in C^{2}(\R^{2}_{+}, \R)$, we make the following hypotheses:
\begin{compactenum}[$(K1)$]
\item  $K(t u, tv)=t^{2^{*}_{s}}K(u, v)$ for all $t>0$, $(u, v)\in \R^{2}_{+}$;
\item the $1$-homogeneous function $G: \R^{2}_{+}\rightarrow \R$ given by $G(u^{2^{*}_{s}}, v^{2^{*}_{s}})=K(u, v)$ is concave;
\item there exists $c>0$ such that $|K_{u}(u, v)|+|K_{v}(u, v)|\leq c(u^{2^{*}_{s}-1}+v^{2^{*}_{s}-1})$ for all $(u, v)\in \R^{2}_{+}$;
\item $K_{u}(0, 1)=0=K_{v}(1, 0)$;
\item $K_{u}(1, 0)=0=K_{v}(0, 1)$;
\item $K(u, v)>0$ for any $u, v>0$;
\item $K_{u}(u, v), K_{v}(u, v)\geq 0$ for all $(u, v)\in \R^{2}_{+}$.
\end{compactenum}
Since we are interested in positive solutions $(u, v)$ of \eqref{P}, that is $u, v>0$ in $\R^{N}$, we extend the functions $Q$ and $K$ to the whole $\R^{2}$ by setting $Q(u, v)=K(u, v)=0$ if $u\leq 0$ or $v\leq 0$. 
\begin{remark}
If $F(u, v)\in C^{2}$ is a $q$-homogeneous function, then we have the following identities:
\begin{equation}\label{2.1}
qF(u, v)=uF_{u}(u, v)+vF_{v}(u, v) \mbox{ for any } (u, v)\in \R^{2},
\end{equation}
and
\begin{equation}\label{2.11}
q(q-1)F(u, v)=u^{2}F_{uu}(u, v)+2uvF_{uv}(u, v)+v^{2}F_{vv}(u, v) \mbox{ for any } (u, v)\in \R^{2}.
\end{equation}
\end{remark}
\noindent
Now, we give some examples of functions $Q$ and $K$ satisfying our assumptions. \\
Let $q\geq 1$ and 
$$
\mathcal{P}_{q}(u, v)=\sum_{\alpha_{i}+\beta_{i}=q} a_{i} u^{\alpha_{i}} v^{\beta_{i}}
$$
where $i\in \{1, \dots, k\}$, $\alpha_{i}, \beta_{i}\geq 1$ and $a_{i}\in \R$. The following functions and their possible combinations, with appropriate choice of the coefficients $a_{i}$, verify our assumptions  on $Q$
$$
Q_{1}(u, v)=\mathcal{P}_{p}(u, v),  \quad Q_{2}(u, v))=\sqrt[r]{\mathcal{P}_{\ell}(u, v)} \quad \mbox{ and } \quad Q_{3}(u, v)=\frac{\mathcal{P}_{\ell_{1}}(u, v)}{\mathcal{P}_{\ell_{2}}(u, v)},
$$
with $r= \ell p$ and $\ell_{1}-\ell_{2}=p$. As a model for $K$, we can take $K(u, v)=\mathcal{P}_{2^{*}_{s}}(u, v)$.

We would like to note that when $s=1$,  the above hypotheses on $Q$ and $K$ have been used in \cite{AFF2} (see also \cite{DMFS}) to study the concentration phenomena of solutions for the following elliptic system
\begin{equation}\label{ECS}
\left\{
\begin{array}{ll}
 -\e^{2}\Delta u+V(x)u=Q_{u}(u, v)+\frac{1}{2^{*}_{s}}K_{u}(u, v) &\mbox{ in } \R^{N}\\
 -\e^{2}\Delta v+W(x)v=Q_{v}(u, v)+\frac{1}{2^{*}_{s}}K_{v}(u, v) &\mbox{ in } \R^{N} \\
u, v>0 &\mbox{ in } \R^{N};
\end{array}
\right.
\end{equation}
further interesting results for elliptic systems can be found in \cite{AS, AY, BS, FF, Hichem, WA}.\\
In this paper, we extend the existence and multiplicity results obtained in \cite{AFF2} to the nonlocal framework.\\
It is worth observing that nowadays there are several papers dealing with fractional systems set in bounded domain and in the whole $\R^{N}$ \cite{ ASy, A5, choi, DPP, FMPS, glz, lm, TVZ, ww}, but, to our knowledge, no results on the multiplicity and concentration of solutions for critical fractional Schr\"odinger systems have been established. The purpose of this work is to give a first result in this direction.\\ 
Before stating our main result, we introduce some useful notations. 
Let $\xi\in \R^{N}$ fixed, and we consider the following autonomous system
\begin{equation}\label{P0}
\left\{
\begin{array}{ll}
 (-\Delta)^{s}u+V(\xi)u=Q_{u}(u, v)+\frac{1}{2^{*}_{s}}K_{u}(u, v)  &\mbox{ in } \R^{N}\\
 (-\Delta)^{s}u+W(\xi)v=Q_{v}(u, v)+\frac{1}{2^{*}_{s}}K_{v}(u, v) &\mbox{ in } \R^{N} \\
u, v>0 &\mbox{ in } \R^{N}.
\end{array}
\right.
\end{equation}
We set $\X_{0}=H^{s}(\R^{N})\times H^{s}(\R^{N})$ endowed with the following norm
$$
\|(u, v)\|^{2}_{\xi}=\int_{\R^{N}} |(-\Delta)^{\frac{s}{2}} u|^{2}+ |(-\Delta)^{\frac{s}{2}} v|^{2} dx+\int_{\R^{N}} (V(\xi) u^{2}+W(\xi) v^{2}) dx. 
$$
Let $\J_{\xi}: \X_{0}\rightarrow \R$ be the Euler-Lagrange functional associated to the above problem, i.e. 
$$
\J_{\xi}(u, v)=\frac{1}{2}\|(u, v)\|^{2}_{\xi}-\int_{\R^{N}} Q(u, v) \,dx-\frac{1}{2^{*}_{s}} \int_{\R^{N}} K(u, v)  \, dx. 
$$
From the assumptions $(H3)$, $(Q1)$ and $(Q2)$, it is easy to see that $\J_{\xi}$ possesses a mountain pass geometry, so we can consider the mountain pass value
$$
C(\xi)=\inf_{\gamma\in \Gamma} \max_{t\in [0, 1]} \J_{\xi}(\gamma(t))
$$
where 
$$
\Gamma=\{\gamma\in C([0, 1], \X_{0}): \gamma(0)=0, \J_{\xi}(\gamma(1))\leq 0\}.
$$
Moreover, $C(\xi)$ can be also characterized as
$$
C(\xi)=\inf_{(u, v)\in \mathcal{N}_{\xi}} \J_{\xi}(u, v),
$$
where $\mathcal{N}_{\xi}$ is the Nehari manifold associated of $\J_{\xi}$. 
As proved in Section $3$, for any fixed $\xi\in \R^{N}$, $C(\xi)$ is achieved, so that
$$
M:=\left\{x\in \R^{N}: C(x)=\inf_{\xi\in \R^{N}} C(\xi)\right\}\neq \emptyset.
$$
Arguing as in \cite{A5}, we can prove that $\xi \mapsto C(\xi)$ is a continuous function and
$$
C^{*}=C(x_{0})=\inf_{\xi\in \Lambda} C(\xi)<\min_{\xi\in \partial \Lambda} C(\xi).
$$
We recall that if $Y$ is a given closed set of a topological space $X$, we denote by $cat_{X}(Y)$ the Ljusternik-Schnirelmann category of $Y$ in $X$, that is the least number of closed and contractible sets in $X$ which cover $Y$.
\smallskip

\begin{theorem}\label{thm1}
Assume that $(H1)$-$(H3)$ hold, and $Q$ and $K$ verify $(Q1)$-$(Q7)$and $(K1)$-$(K7)$ respectively.
In addition, we make the following technical assumption on $Q$:
\begin{compactenum}[$(Q7)$]
\item $Q(u, v)\geq \lambda u^{\tilde{\alpha}} v^{\tilde{\beta}}$ for any $(u, v)\in \R_{+}^{2}$ with $1<\tilde{\alpha}, \tilde{\beta}<2^{*}_{s}$, $\tilde{\alpha}+\tilde{\beta}=q_{1}\in (2, 2^{*}_{s})$, and $\lambda$ verifying 
\begin{itemize}
\item $\lambda>0$ if either $N\geq 4s$, or $2s<N<4s$ and $2^{*}_{s}-2<q_{1}<2^{*}_{s}$;
\item $\lambda$ is sufficiently large if  $2s<N<4s$ and $2<q_{1}\leq 2^{*}_{s}-2$.
\end{itemize}
\end{compactenum}
Then, for any $\delta>0$ satisfying
$$
M_{\delta}=\{x\in \R^{N}: dist(x, M)<\delta\}\subset \Lambda,
$$ 
there exists $\e_{\delta}>0$ such that for any $\e\in (0, \e_{\delta})$, the system \eqref{P} admits at least $cat_{M_{\delta}}(M)$ positive solutions.
Moreover, if $(u_{\e}, v_{\e})$ is a solution to \eqref{P} and $P_{\e}$ and $Q_{\e}$ are maximum points of $u_{\e}$ and $v_{\e}$ respectively, then $C(P_{\e}), C(Q_{\e})\rightarrow C(x_{0})$ as $\e\rightarrow 0$, and we have the following estimates 
\begin{align}\label{DEuv}
u_{\e}(x)\leq \frac{C \e^{N+2s}}{\e^{N+2s}+|x-P_{\e}|^{N+2s}} \,\, \mbox{ and } \,\, v_{\e}(x)\leq \frac{C \e^{N+2s}}{\e^{N+2s}+|x-Q_{\e}|^{N+2s}}. 
\end{align}
\end{theorem}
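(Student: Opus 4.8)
The plan is to combine the penalization method of del Pino--Felmer with the topological (Ljusternik--Schnirelmann) argument of Benci--Cerami, adapted to the nonlocal critical setting. After the rescaling $x\mapsto \e x$, system \eqref{P} becomes
\begin{equation*}
\left\{
\begin{array}{ll}
(-\Delta)^{s}u+V(\e x)u=Q_{u}(u, v)+\frac{1}{2^{*}_{s}}K_{u}(u, v) &\mbox{ in } \R^{N}\\
(-\Delta)^{s}v+W(\e x)v=Q_{v}(u, v)+\frac{1}{2^{*}_{s}}K_{v}(u, v) &\mbox{ in } \R^{N},
\end{array}
\right.
\end{equation*}
and, in the spirit of del Pino--Felmer, one modifies $Q$ and $K$ outside $\{x\in\R^{N}:\e x\in\Lambda\}$ so as to obtain a penalized energy $\J_{\e}$ that has a mountain pass geometry, admits its Nehari manifold $\mathcal{N}_{\e}$ as a natural constraint, and satisfies the Palais--Smale condition at every level strictly below the threshold at which the critical term $K$ destroys compactness. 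Proving the availability of this compactness in the relevant energy range is the core of the argument, and is exactly where hypothesis $(Q7)$ is used.

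Next I would build on the analysis of the autonomous problem \eqref{P0} from Section~3: for each $\xi\in\R^{N}$, $C(\xi)$ is attained on $\mathcal{N}_{\xi}$ by a positive ground state, $\xi\mapsto C(\xi)$ is continuous, and $C^{*}=C(x_{0})=\inf_{\Lambda}C<\min_{\partial\Lambda}C$. Transplanting a ground state of \eqref{P0} at a point of $M$, suitably truncated near the origin and rescaled, one obtains test paths showing that the minimax level of $\J_{\e}$ is $C^{*}+o(1)$ as $\e\to 0$. Then, through fractional Brezis--Nirenberg type estimates based on the Talenti-type extremals of the Sobolev inequality for $(-\Delta)^{s/2}$, on a Sobolev-type inequality for the critical term $K$ (which rests on $(K1)$--$(K3)$ and the concavity $(K2)$), and on the lower bound $Q(u,v)\ge\lambda u^{\tilde\alpha}v^{\tilde\beta}$ of $(Q7)$ with $\lambda$ as prescribed, one checks that $C^{*}$ lies strictly below the first noncompactness level; this is where the conditions linking $N$, the homogeneity $q_{1}$ and the size of $\lambda$ become indispensable. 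I expect the verification of this strict inequality to be the main obstacle.

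Once compactness is secured, the topological count proceeds along standard lines. One constructs a map $\Phi_{\e}\colon M\to\mathcal{N}_{\e}$ by transplanting and cutting off the autonomous ground states inside the rescaled set, arranged so that $\J_{\e}(\Phi_{\e}(y))\to C^{*}$ uniformly in $y\in M$; and a barycenter-type map $\beta_{\e}$ defined on a suitable sublevel $\mathcal{N}_{\e}^{\,C^{*}+h}$ of $\mathcal{N}_{\e}$, valued in $M_{\delta}$, such that $\beta_{\e}\circ\Phi_{\e}$ is homotopic to the inclusion $M\hookrightarrow M_{\delta}$; a concentration--compactness argument guarantees that $\mathcal{N}_{\e}^{\,C^{*}+h}\neq\emptyset$ and that $\beta_{\e}$ is well defined for $\e$ small. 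Applying the Ljusternik--Schnirelmann category theory on the $C^{1}$ manifold $\mathcal{N}_{\e}$ then yields at least $cat_{M_{\delta}}(M)$ critical points of $\J_{\e}$ in $\mathcal{N}_{\e}^{\,C^{*}+h}$, hence at least $cat_{M_{\delta}}(M)$ nonnegative solutions of the penalized system.

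Finally I would return to \eqref{P}. A Moser iteration adapted to $(-\Delta)^{s}$ gives a uniform $L^{\infty}$ bound and, combined with the decay at infinity of the rescaled solutions, shows that for $\e$ small every penalized solution stays below the penalization threshold, hence solves the original system; the strong maximum principle for $(-\Delta)^{s}$ then gives $u_{\e},v_{\e}>0$. Writing $P_{\e},Q_{\e}$ for the maximum points of $u_{\e},v_{\e}$, one proves $P_{\e},Q_{\e}\in\Lambda$ and that $u_{\e}(\e\cdot+P_{\e})$ and $v_{\e}(\e\cdot+Q_{\e})$ converge in $H^{s}(\R^{N})$ (up to a subsequence) to a ground state of \eqref{P0} at a point of $M$; since $\xi\mapsto C(\xi)$ is continuous and $\inf_{\R^{N}}C=C(x_{0})$, this forces $C(P_{\e}),C(Q_{\e})\to C(x_{0})$. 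The pointwise estimate \eqref{DEuv} follows by comparison: for the operator $\e^{2s}(-\Delta)^{s}+c_{0}$ outside a large ball one constructs a supersolution equal to a constant multiple of $x\mapsto \e^{N+2s}\bigl(\e^{N+2s}+|x-P_{\e}|^{N+2s}\bigr)^{-1}$, using that $(-\Delta)^{s}$ applied to the profile $(1+|x|^{2})^{-(N+2s)/2}$ decays like $|x|^{-(N+2s)}$, and then invokes the maximum principle together with the uniform decay already established.
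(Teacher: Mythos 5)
Your outline follows essentially the same route as the paper: rescaling, del Pino--Felmer penalization, Palais--Smale below the critical threshold $\frac{s}{N}\widetilde{S}_{K}^{N/2s}$ via concentration--compactness, the Benci--Cerami maps $\Phi_{\e}$ and $\beta_{\e}$ with Ljusternik--Schnirelmann theory, and finally Moser iteration plus a Bessel-kernel/comparison argument for the $L^{\infty}$ bound, positivity, and the power-type decay \eqref{DEuv}. The only small imprecision is the suggestion at the end of your first paragraph that $(Q7)$ enters the proof of the Palais--Smale condition itself; as you correctly state afterwards, it is used only to verify that the mountain pass level of the autonomous problem (hence $C^{*}$) lies strictly below the noncompactness threshold.
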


\noindent
We note that Theorem \ref{thm1} represents the nonlocal counterpart of Theorem $1.1$ proved  in \cite{AFF2}.\\
One of the main difficulties of the analysis of the problem \eqref{P} is due to the nonlocal character of the fractional Laplacian. To circumvent this hitch, Caffarelli and Silvestre \cite{CS} proved 
that one can localize the operator $(-\Delta)^{s}$ by considering it as the Dirichlet to Neumann operator associated to the $s$-harmonic extension in the halfspace $\R^{N+1}_{+}$, paying the price to add a new variable.
Anyway, in this work, we prefer to analyze the problem directly in $H^{s}(\R^{N})\times H^{s}(\R^{N})$, in order to adapt in our context some ideas developed in local setting (i.e. $s=1$) in \cite{Alves, AFF, AFF2, DMFS}.\\
The proof of Theorem \ref{thm1} is variational and it is based on the approach developed in \cite{A5} to study subcritical fractional systems.
Since we do not have any information about the behavior of the potential $V$ at the infinity, we are not able to show that the functional associated to \eqref{P} satisfies any compactness condition.
For this reason, as in \cite{Alves, DF}, we introduce a suitable penalization function modifying the nonlinearity $Q(u, v)+\frac{1}{2^{*}_{s}}K(u, v)$ outside $\Lambda$. Then, we can prove that the associated modified functional satisfies the Palais-Smale condition at every level $c<\frac{s}{N}\widetilde{S}_{K}$, where the number
\begin{equation*}
\widetilde{S}_{K}=\inf_{(u, v)\in \X_{0}\setminus\{(0, 0)\}} \frac{\int_{\R^{N}} |(-\Delta)^{\frac{s}{2}} u|^{2}+|(-\Delta)^{\frac{s}{2}} v|^{2} dx}{\left(\int_{\R^{N}} K(u^{+}, v^{+}) dx\right)^{\frac{2}{2^{*}_{s}}}}
\end{equation*}
is strongly related to the best constant $S_{*}$ of the Sobolev embedding $H^{s}(\R^{N})$ into $L^{2^{*}_{s}}(\R^{N})$. As observed in  \cite{ASy}, $\widetilde{S}_{K}$ plays a fundamental role when we have to study critical systems like \eqref{P}.
Clearly, due to the presence of the critical Sobolev exponent, the calculations needed to get compactness for the critical modified functional are more complicated than the ones performed in the subcritical case.
After that, by using the technique introduced  by Benci and Cerami \cite{BC} and Ljusternik-Schnirelmann theory, we obtain multiple solutions of the critical modified problem. 
It remains to prove that the solutions $(u_{\e}, v_{\e})$ of the critical modified problem are indeed solutions to \eqref{P} for $\e>0$  sufficiently small. Unfortunately, the methods used in \cite{Alves, AFF, AFF2} do not work in our context due to the nonlocality of $(-\Delta)^{s}$. To overcome this difficulty, we take care of the arguments employed in \cite{AM, A3, HZ} to study scalar fractional Schr\"odinger equations.
More precisely, by using some properties of the Bessel kernel \cite{FQT}, we can see that the sum $u_{\e}+v_{\e}$ of the solutions of \eqref{P} has a power-type decay at infinity, and this will be fundamental  to achieve our aim.\\
We would like to note that  Theorem \ref{thm1} complements the results obtained in \cite{AM, A3, A5, HZ}, in the sense that we are considering the multiplicity for fractional critical systems. 
\smallskip

\noindent
The structure of the paper is the following. In Section $2$ we recall some useful facts about the fractional Sobolev spaces.  
In Section $3$ we deal with the fractional autonomous systems associated to \eqref{P}. In Section $4$
we introduce the modified problem and we provide some fundamental compactness results. In Section $5$ we prove that the modified problem admits multiple solutions.
In Section $6$ we give the proof of Theorem \ref{thm1}.

\section{Fractional Sobolev spaces}
\noindent
In this section we offer a rather sketchy review of the fractional Sobolev spaces and some useful results which will be used later. For more details, we refer to \cite{DPV, MBRS}.

Fixed $s\in (0,1)$, we denote by $\dot{H}^{s}(\R^{N})$ the set of functions $u\in L^{2^{*}_{s}}(\R^{N})$ such that  $\int_{\R^{N}} |(-\Delta)^{\frac{s}{2}} u|^{2} \,dx<\infty$, where $2^{*}_{s}=\frac{2N}{N-2s}$ is the fractional critical Sobolev exponent.
We define the fractional Sobolev space
$$
H^{s}(\R^{N})= \left\{u\in L^{2}(\R^{N}) : \int_{\R^{N}} |(-\Delta)^{\frac{s}{2}} u|^{2} \,dx<\infty \right \}
$$
endowed with the natural norm 
$$
\|u\|_{H^{s}(\R^{N})} = \sqrt{\int_{\R^{N}} |(-\Delta)^{\frac{s}{2}} u|^{2} dx + \int_{\R^{N}} |u|^{2} dx}.
$$

\noindent
We recall the following embeddings of the fractional Sobolev spaces into Lebesgue spaces.
\begin{theorem}\cite{DPV}\label{Sembedding}
Let $s\in (0,1)$ and $N>2s$. Then there exists a sharp constant $S_{*}=S(N, s)>0$
such that for any $u\in H^{s}(\R^{N})$
\begin{equation}\label{FSI}
\left(\int_{\R^{N}} |u|^{2^{*}_{s}} dx\right)^{\frac{2}{2^{*}_{s}}}  \leq S_{*} \int_{\R^{N}} |(-\Delta)^{\frac{s}{2}} u|^{2} dx . 
\end{equation}
Moreover $H^{s}(\R^{N})$ is continuously embedded in $L^{q}(\R^{N})$ for any $q\in [2, 2^{*}_{s}]$ and compactly in $L^{q}_{loc}(\R^{N})$ for any $q\in [2, 2^{*}_{s})$. 
\end{theorem}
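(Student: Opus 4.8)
The plan is to deduce \eqref{FSI} from the Hardy--Littlewood--Sobolev (HLS) inequality, obtain the continuous embeddings by interpolation, and establish the local compactness through the Fr\'echet--Kolmogorov criterion.

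\emph{Sobolev inequality.} I would work on the Fourier side, using that for $u\in H^{s}(\R^{N})$ one has $\int_{\R^{N}}|(-\Delta)^{\frac{s}{2}}u|^{2}\,dx=\int_{\R^{N}}|\xi|^{2s}|\widehat{u}(\xi)|^{2}\,d\xi$. Writing $g:=(-\Delta)^{\frac{s}{2}}u\in L^{2}(\R^{N})$, we recover $u=I_{s}g$, where $I_{s}$ is the Riesz potential of order $s$, i.e.\ convolution against a constant multiple of $|x|^{-(N-s)}$, whose Fourier multiplier is $|\xi|^{-s}$. The HLS inequality then yields $\|I_{s}g\|_{L^{q}(\R^{N})}\leq C(N,s)\,\|g\|_{L^{2}(\R^{N})}$ precisely for the exponent $q$ with $\frac{1}{q}=\frac{1}{2}-\frac{s}{N}$, that is $q=2^{*}_{s}$; this already gives \eqref{FSI} with some constant. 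To identify the \emph{sharp} constant $S_{*}$ one passes to radially symmetric decreasing rearrangements and analyzes the equality cases in HLS in the spirit of Lieb's work; since only the existence of $S_{*}$ is needed in the sequel, I would record this step and refer to \cite{DPV}. An equivalent self-contained route is to run the same argument starting from the Gagliardo seminorm $\iint_{\R^{N}\times\R^{N}}|u(x)-u(y)|^{2}|x-y|^{-N-2s}\,dx\,dy$, which is comparable to $\int_{\R^{N}}|(-\Delta)^{\frac{s}{2}}u|^{2}\,dx$.

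\emph{Continuous embeddings.} The case $q=2$ is immediate from the definition of $\|\cdot\|_{H^{s}(\R^{N})}$, and the case $q=2^{*}_{s}$ follows from \eqref{FSI} together with $\int_{\R^{N}}|(-\Delta)^{\frac{s}{2}}u|^{2}\,dx\leq\|u\|_{H^{s}(\R^{N})}^{2}$. For intermediate $q\in(2,2^{*}_{s})$, write $\frac{1}{q}=\frac{\theta}{2}+\frac{1-\theta}{2^{*}_{s}}$ with $\theta\in(0,1)$ and apply the interpolation inequality $\|u\|_{L^{q}(\R^{N})}\leq\|u\|_{L^{2}(\R^{N})}^{\theta}\,\|u\|_{L^{2^{*}_{s}}(\R^{N})}^{1-\theta}$, bounding each factor by a multiple of $\|u\|_{H^{s}(\R^{N})}$.

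\emph{Local compactness.} Fix a bounded open set $\Omega\subset\R^{N}$, a sequence $(u_{n})$ bounded in $H^{s}(\R^{N})$, and $q\in[2,2^{*}_{s})$; the goal is a subsequence converging in $L^{q}(\Omega)$. By the continuous embedding, $(u_{n})$ is bounded in $L^{q}(\Omega)$. The decisive point is uniform equicontinuity of translates: from the Fourier identity and the elementary bound $|e^{\imath h\cdot\xi}-1|^{2}\leq 2^{2-2s}|h|^{2s}|\xi|^{2s}$ one gets $\|u(\cdot+h)-u\|_{L^{2}(\R^{N})}\leq C|h|^{s}\,\|(-\Delta)^{\frac{s}{2}}u\|_{L^{2}(\R^{N})}$; interpolating this with the uniform $L^{2^{*}_{s}}$ bound exactly as in the previous paragraph gives $\|u_{n}(\cdot+h)-u_{n}\|_{L^{q}(\R^{N})}\leq C|h|^{s\theta}\to 0$ as $h\to 0$, uniformly in $n$. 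Since $\Omega$ is bounded, no separate tail estimate is required, and the Fr\'echet--Kolmogorov (Riesz) compactness theorem in $L^{q}(\Omega)$ produces a convergent subsequence; a diagonal argument over an exhaustion of $\R^{N}$ by balls upgrades this to convergence in $L^{q}_{loc}(\R^{N})$, which is the asserted compact embedding.

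\emph{Main obstacle.} Producing \eqref{FSI} with \emph{some} constant is routine once HLS is in hand; the genuinely delicate point is the sharpness of $S_{*}$, which needs the rearrangement and equality-analysis machinery behind HLS and is precisely what is borrowed from \cite{DPV}. On the compactness side the only real work is the uniform-in-$n$ smallness of translates in $L^{q}$: one must retain the gain coming from the $s$-th order smoothness when passing from $L^{2}$ to $L^{q}$, and interpolating against the critical Sobolev bound is exactly what makes this succeed for every $q<2^{*}_{s}$ and break down at $q=2^{*}_{s}$.
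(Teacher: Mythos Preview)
Your proposal is correct and follows a standard route: Hardy--Littlewood--Sobolev for \eqref{FSI}, interpolation for the intermediate continuous embeddings, and a Fr\'echet--Kolmogorov argument (with the translation estimate coming from the Fourier side) for local compactness. Note, however, that the paper does not supply a proof of this theorem at all: it is stated as a quoted result from \cite{DPV}, so there is no ``paper's own proof'' to compare against. Your sketch is a perfectly acceptable way to fill in what \cite{DPV} provides.
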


\noindent
The following lemma is a version of the well-known concentration-compactness principle:
\begin{lemma}\cite{Secchi}\label{lionslemma}
Let $N>2s$. If $(u_{n})$ is a bounded sequence in $H^{s}(\R^{N})$ and if
$$
\lim_{n \rightarrow \infty} \sup_{y\in \R^{N}} \int_{B_{R}(y)} |u_{n}|^{2} dx=0
$$
where $R>0$,
then $u_{n}\rightarrow 0$ in $L^{t}(\R^{N})$ for all $t\in (2, 2^{*}_{s})$.
\end{lemma}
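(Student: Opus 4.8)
The plan is to deduce the statement, via a covering argument, from the single case of the exponent
$q_{s}:=\frac{4(2^{*}_{s}-1)}{2^{*}_{s}}$,
which lies strictly inside the interval $(2,2^{*}_{s})$ since $2^{*}_{s}>2$, and then to recover all $t\in(2,2^{*}_{s})$ by a last interpolation. Throughout, set $M:=\sup_{n}\|u_{n}\|_{H^{s}(\R^{N})}<\infty$ and $\delta_{n}:=\sup_{y\in\R^{N}}\int_{B_{R}(y)}|u_{n}|^{2}\,dx$, so that $\delta_{n}\to 0$ by hypothesis.

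First I would fix a countable covering $\{B_{R}(y_{i})\}_{i}$ of $\R^{N}$ with the finite--overlap property: there is $\kappa=\kappa(N)$ such that every point of $\R^{N}$ belongs to at most $\kappa$ of these balls (e.g. take the centres in a sufficiently fine lattice). On each $B_{R}(y_{i})$, H\"older's inequality yields the interpolation estimate $\|u_{n}\|_{L^{q_{s}}(B_{R}(y_{i}))}\le\|u_{n}\|_{L^{2}(B_{R}(y_{i}))}^{1-\theta}\,\|u_{n}\|_{L^{2^{*}_{s}}(B_{R}(y_{i}))}^{\theta}$ with $\theta\in(0,1)$ fixed by $\frac{1}{q_{s}}=\frac{1-\theta}{2}+\frac{\theta}{2^{*}_{s}}$; the choice of $q_{s}$ is exactly the one for which $\theta q_{s}=2$, so that $(1-\theta)q_{s}=q_{s}-2$. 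Coupling this with the fractional Sobolev embedding $H^{s}(B_{R}(y_{i}))\hookrightarrow L^{2^{*}_{s}}(B_{R}(y_{i}))$ on the bounded Lipschitz domain $B_{R}(y_{i})$ (see \cite{DPV}), whose constant is independent of $i$ by translation invariance, and raising to the power $q_{s}$, I obtain
\begin{equation*}
\int_{B_{R}(y_{i})}|u_{n}|^{q_{s}}\,dx\leq C\Big(\int_{B_{R}(y_{i})}|u_{n}|^{2}\,dx\Big)^{\frac{q_{s}-2}{2}}\|u_{n}\|_{H^{s}(B_{R}(y_{i}))}^{2},
\end{equation*}
where $\|u_{n}\|_{H^{s}(B_{R}(y_{i}))}^{2}:=\int_{B_{R}(y_{i})}|u_{n}|^{2}\,dx+\iint_{B_{R}(y_{i})\times B_{R}(y_{i})}\frac{|u_{n}(x)-u_{n}(z)|^{2}}{|x-z|^{N+2s}}\,dx\,dz$ and $C=C(N,s,R)$.

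Next I would sum over $i$. Since the balls cover $\R^{N}$, $\int_{\R^{N}}|u_{n}|^{q_{s}}\,dx\le\sum_{i}\int_{B_{R}(y_{i})}|u_{n}|^{q_{s}}\,dx$, and bounding $\big(\int_{B_{R}(y_{i})}|u_{n}|^{2}\,dx\big)^{(q_{s}-2)/2}\le\delta_{n}^{(q_{s}-2)/2}$ uniformly in $i$ (note $q_{s}>2$), this leaves $\int_{\R^{N}}|u_{n}|^{q_{s}}\,dx\le C\,\delta_{n}^{(q_{s}-2)/2}\sum_{i}\|u_{n}\|_{H^{s}(B_{R}(y_{i}))}^{2}$. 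The crucial step---and, compared with the local case $s=1$, the only genuinely delicate one, since the nonlocal seminorm does not split into pieces---is the bound $\sum_{i}\|u_{n}\|_{H^{s}(B_{R}(y_{i}))}^{2}\le C(N,s)\,\kappa\,\|u_{n}\|_{H^{s}(\R^{N})}^{2}$: for the $L^{2}$--parts it is immediate from finite overlap, while for the Gagliardo seminorms, since for fixed $(x,z)$ the number of indices $i$ with $x,z\in B_{R}(y_{i})$ is at most $\kappa$, Tonelli's theorem gives $\sum_{i}\iint_{B_{R}(y_{i})\times B_{R}(y_{i})}\frac{|u_{n}(x)-u_{n}(z)|^{2}}{|x-z|^{N+2s}}\,dx\,dz\le\kappa\iint_{\R^{N}\times\R^{N}}\frac{|u_{n}(x)-u_{n}(z)|^{2}}{|x-z|^{N+2s}}\,dx\,dz$, which is a constant multiple of $\int_{\R^{N}}|(-\Delta)^{\frac{s}{2}}u_{n}|^{2}\,dx$. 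Consequently $\int_{\R^{N}}|u_{n}|^{q_{s}}\,dx\le C\,M^{2}\,\delta_{n}^{(q_{s}-2)/2}\to 0$, i.e. $u_{n}\to 0$ in $L^{q_{s}}(\R^{N})$.

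Finally, for an arbitrary $t\in(2,2^{*}_{s})$ I would interpolate once more, globally. If $t\in(2,q_{s}]$, writing $\frac{1}{t}=\frac{1-\eta}{2}+\frac{\eta}{q_{s}}$ with $\eta\in(0,1]$, H\"older gives $\|u_{n}\|_{L^{t}(\R^{N})}\le\|u_{n}\|_{L^{2}(\R^{N})}^{1-\eta}\|u_{n}\|_{L^{q_{s}}(\R^{N})}^{\eta}\le M^{1-\eta}\|u_{n}\|_{L^{q_{s}}(\R^{N})}^{\eta}\to 0$; if $t\in[q_{s},2^{*}_{s})$, writing $\frac{1}{t}=\frac{1-\eta}{q_{s}}+\frac{\eta}{2^{*}_{s}}$ with $\eta\in[0,1)$, H\"older together with \eqref{FSI} gives $\|u_{n}\|_{L^{t}(\R^{N})}\le\|u_{n}\|_{L^{q_{s}}(\R^{N})}^{1-\eta}\|u_{n}\|_{L^{2^{*}_{s}}(\R^{N})}^{\eta}\le C\,\|u_{n}\|_{L^{q_{s}}(\R^{N})}^{1-\eta}\to 0$. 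Hence $u_{n}\to 0$ in $L^{t}(\R^{N})$ for every $t\in(2,2^{*}_{s})$, as claimed. The heart of the proof is thus the nonlocal finite--overlap estimate of the third step; the rest is standard interpolation and covering bookkeeping.
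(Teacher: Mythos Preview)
The paper does not supply its own proof of this lemma; it is quoted from \cite{Secchi} and used as a black box. Your argument is correct and is precisely the standard proof of the Lions vanishing lemma adapted to the fractional setting: interpolate on each ball of a bounded-overlap cover, invoke the Sobolev embedding $H^{s}(B_{R})\hookrightarrow L^{2^{*}_{s}}(B_{R})$ with a translation-invariant constant, sum the local Gagliardo pieces using that each pair $(x,z)$ contributes to at most $\kappa$ terms, and finish by global interpolation. The only point worth a small remark is that your specific choice of $q_{s}$ is a convenience (so that $\theta q_{s}=2$ and the $H^{s}$-norm appears with exponent exactly $2$ after raising to the $q_{s}$-th power), not a necessity; any fixed $t\in(2,2^{*}_{s})$ works directly with $\theta t<2^{*}_{s}$ and a further H\"older step, as in the original Lions argument. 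Either way, since the paper offers no proof of its own, there is nothing further to compare.
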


\noindent
Now, we define the quantity
$$
\widetilde{S}_{K}=\inf\left\{ \int_{\R^{N}} |(-\Delta)^{\frac{s}{2}}u|^{2}+|(-\Delta)^{\frac{s}{2}}v|^{2}\, dx: u, v\in H^{s}(\R^{N}),\,\int_{\R^{N}} K(u^{+}, v^{+})\, dx=1\right\}.
$$

\noindent
In the next Lemma, we prove an interesting relation between $S_{*}$ and $\widetilde{S}_{K}$.
\begin{lemma}\label{FSthm}
The following identity holds
\begin{equation}
\tilde{S}_{K}=S_{*} \frac{u_{0}^{2}+v^{2}_{0}}{[K(u_{0}, v_{0})]^{\frac{2}{2^{*}_{s}}}},
\end{equation}
where $(u_{0}, v_{0})\in \R^{2}$ is such that
$$
K(u_{0}, v_{0})^{\frac{2}{2^{*}_{s}}}=\max\{K(u, v)^{\frac{2}{2^{*}_{s}}}: |u|^{2}+|v|^{2}=1\}.
$$
We observe that the maximum point $(u_{0}, v_{0})$ exists in view of the fact that $K(u, v)^{\frac{2}{2^{*}_{s}}}$ is continuous and $\{(u, v)\in \R^{2}: |u|^{2}+|v|^{2}=1\}$ is compact.
\end{lemma}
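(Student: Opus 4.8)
The plan is to establish the two matching inequalities $\widetilde{S}_{K}\le\Sigma$ and $\widetilde{S}_{K}\ge\Sigma$, where $\Sigma$ denotes the right-hand side of the asserted identity, using only the sharp scalar inequality \eqref{FSI}, the $2^{*}_{s}$-homogeneity $(K1)$, the concavity $(K2)$, and an elementary optimization over $\R^{2}_{+}$. I shall use throughout that $u_{0},v_{0}>0$ and $K(u_{0},v_{0})>0$: since $K$ is extended by zero outside $\R^{2}_{+}$ and $K(u,v)>0$ for $u,v>0$ by $(K6)$, the maximizer of $K(\cdot,\cdot)^{2/2^{*}_{s}}$ on the unit circle lies in the open first quadrant, so $\Sigma$ is well defined.

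For the upper bound I would test the quotient defining $\widetilde{S}_{K}$ with functions of the product form $(u,v)=(u_{0}\varphi,v_{0}\varphi)$, where $\varphi\ge0$ ranges over a minimizing sequence for the scalar ratio $\int_{\R^{N}}|(-\Delta)^{s/2}\varphi|^{2}\,dx\,/\,\|\varphi\|_{L^{2^{*}_{s}}(\R^{N})}^{2}$ (equivalently, a fractional Aubin--Talenti extremal may be used). By $(K1)$ and $\varphi\ge0$ one has the pointwise identity $K(u_{0}\varphi,v_{0}\varphi)=K(u_{0},v_{0})\,\varphi^{2^{*}_{s}}$, so after rescaling $\varphi$ to meet the normalization $\int_{\R^{N}}K((u_{0}\varphi)^{+},(v_{0}\varphi)^{+})\,dx=1$, the value of the quotient becomes $(u_{0}^{2}+v_{0}^{2})\,K(u_{0},v_{0})^{-2/2^{*}_{s}}$ multiplied by the scalar ratio above; letting $\varphi$ run along the minimizing sequence and invoking \eqref{FSI} gives $\widetilde{S}_{K}\le\Sigma$.

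For the lower bound, fix $u,v\in H^{s}(\R^{N})$ with $\int_{\R^{N}}K(u^{+},v^{+})\,dx=1$. Replacing $(u,v)$ by $(u^{+},v^{+})$ leaves the constraint unchanged and does not increase $\int_{\R^{N}}|(-\Delta)^{s/2}u|^{2}+|(-\Delta)^{s/2}v|^{2}\,dx$, since $|u^{+}(x)-u^{+}(y)|\le|u(x)-u(y)|$ makes the Gagliardo seminorm non-increasing under taking positive parts; so I may assume $u,v\ge0$. Applying \eqref{FSI} to each component reduces the problem to bounding $a^{2}+b^{2}$ from below, with $a=\|u\|_{L^{2^{*}_{s}}(\R^{N})}$ and $b=\|v\|_{L^{2^{*}_{s}}(\R^{N})}$. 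This is the place where $(K2)$ is decisive: writing $K(u,v)=G(u^{2^{*}_{s}},v^{2^{*}_{s}})$, setting $f=u^{2^{*}_{s}}$, $g=v^{2^{*}_{s}}\in L^{1}(\R^{N})$ (as $u,v\in L^{2^{*}_{s}}$) and $\psi(t)=G(t,1-t)$ (concave on $[0,1]$), the $1$-homogeneity of $G$ gives $G(f,g)=(f+g)\,\psi\!\big(f/(f+g)\big)$ on $\{f+g>0\}$, and Jensen's inequality for the probability measure $(f+g)\,dx\,/\int_{\R^{N}}(f+g)\,dx$ yields $1=\int_{\R^{N}}G(f,g)\,dx\le G\big(\int_{\R^{N}}f,\int_{\R^{N}}g\big)=G(a^{2^{*}_{s}},b^{2^{*}_{s}})=K(a,b)$. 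Finally, because $(u_{0},v_{0})$ maximizes $K(\cdot,\cdot)^{2/2^{*}_{s}}$ on the unit circle, homogeneity gives $(p^{2}+q^{2})/K(p,q)^{2/2^{*}_{s}}\ge(u_{0}^{2}+v_{0}^{2})/K(u_{0},v_{0})^{2/2^{*}_{s}}$ for every $(p,q)\in\R^{2}_{+}$ with $K(p,q)>0$; applying this with $(p,q)=(a,b)$ and using $K(a,b)\ge1$ yields $a^{2}+b^{2}\ge(u_{0}^{2}+v_{0}^{2})/K(u_{0},v_{0})^{2/2^{*}_{s}}$, and feeding this back through \eqref{FSI} gives $\widetilde{S}_{K}\ge\Sigma$.

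The two estimates coincide, which proves the lemma. The step I expect to be the main obstacle is the Jensen/concavity step: one has to reduce the two-variable concave function $G$ to a genuine one-variable concave function, discard the null set $\{u=v=0\}$ correctly, and --- crucially --- use the $1$-homogeneity of $G$, not merely its concavity, in order for Jensen to be applicable on the infinite-measure space $\R^{N}$. It is also worth recording that every inequality used above is an equality exactly when $u$ and $v$ are proportional and each is an extremal of \eqref{FSI}, which matches the test functions of the upper bound and, if one wishes, shows in addition that the infimum $\widetilde{S}_{K}$ is attained.
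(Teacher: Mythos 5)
Your proof is correct and follows essentially the same route as the paper: the upper bound by testing with $(u_{0}\varphi, v_{0}\varphi)$ along a Sobolev-minimizing sequence, and the lower bound by applying the Sobolev inequality componentwise together with the H\"older-type inequality $\int_{\R^{N}}K(u,v)\,dx\le K(\|u\|_{L^{2^{*}_{s}}},\|v\|_{L^{2^{*}_{s}}})$ and the extremality of $(u_{0},v_{0})$. The only difference is that you prove that H\"older-type inequality yourself via Jensen applied to $\psi(t)=G(t,1-t)$ (and you justify the reduction to $u,v\ge 0$), whereas the paper simply cites it from Proposition 4 of de Morais Filho--Souto.
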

\begin{proof}
Let $\{w_{n}\}\subset H^{s}(\R^{N})$ be a minimizing sequence for $S_{*}$, and we consider the sequence $\{(u_{0}w_{n}, v_{0}w_{n})\}$.
Then, by using the definition of $\tilde{S}_{K}$ and $(u_{0}, v_{0})$, and $(K1)$, we can see that
\begin{align*}
\tilde{S}_{K}&\leq \frac{\int_{\R^{N}} |(-\Delta)^{\frac{s}{2}} (u_{0} w_{n})|^{2}+|(-\Delta)^{\frac{s}{2}} (v_{0} w_{n})|^{2} \, dx}{(\int_{\R^{N}} K(u_{0} w^{+}_{n}, v_{0} w_{n}^{+})\, dx)^{\frac{2}{2^{*}_{s}}}} \\
&=\frac{u_{0}^{2}+v^{2}_{0}}{[K(u_{0}, v_{0})]^{\frac{2}{2^{*}_{s}}}} \frac{\int_{\R^{N}} |(-\Delta)^{\frac{s}{2}} w_{n}|^{2} \, dx}{(\int_{\R^{N}} |w_{n}|^{2^{*}_{s}}\, dx)^{\frac{2}{2^{*}_{s}}}}.
\end{align*}
Taking the limit as $n\rightarrow \infty$, we get 
\begin{equation}\label{FS1}
\tilde{S}_{K}\leq S_{*} \frac{u_{0}^{2}+v^{2}_{0}}{[K(u_{0}, v_{0})]^{\frac{2}{2^{*}_{s}}}}.
\end{equation}
Now, let $\{(u_{n}, v_{n})\}$ be a minimizing sequence for $\tilde{S}_{K}$. 
Recalling the definitions of $S_{*}$ and $(u_{0}, v_{0})$, and by using $(K1)$-$(K2)$, we have
\begin{align}\label{franci}
\frac{\int_{\R^{N}} |(-\Delta)^{\frac{s}{2}} u_{n}|^{2}+|(-\Delta)^{\frac{s}{2}} v_{n}|^{2} \, dx}{(\int_{\R^{N}} K(u^{+}_{n}, v_{n}^{+})\, dx)^{\frac{2}{2^{*}_{s}}}}&\geq S_{*} \frac{\|u_{n}\|^{2}_{L^{2^{*}_{s}}(\R^{N})}+\|v_{n}\|^{2}_{L^{2^{*}_{s}}(\R^{N})}}{K(\|u_{n}\|_{L^{2^{*}_{s}}(\R^{N})}, \|v_{n}\|_{L^{2^{*}_{s}}(\R^{N})})^{\frac{2}{2^{*}_{s}}}}\nonumber\\
&\geq S_{*} \frac{u_{0}^{2}+v^{2}_{0}}{[K(u_{0}, v_{0})]^{\frac{2}{2^{*}_{s}}}},
\end{align}  
where in the first inequality we used the following property for homogeneous function (see Proposition $4$ in \cite{DMFS}):
if $F$ a is $q$-homogeneous continuous function, with $q\geq 1$, and
the $1$-homogeneous function $G$ defined by
$$
G(u^{q}, v^{q})=F(u, v) \mbox{ for all } u, v\geq 0
$$
is concave,
then it holds the following H\"older type inequality 
\begin{equation*}
\int_{\R^{N}} F(u, v) \, dx\leq F(\|u\|_{L^{q}(\R^{N})}, \|v\|_{L^{q}(\R^{N})}) \mbox{ for all } u, v\in L^{q}(\R^{N}): u, v\geq 0.
\end{equation*}
Thus, by passing to the limit in \eqref{franci} as $n\rightarrow \infty$ we deduce that
\begin{equation}\label{FS2}
\tilde{S}_{K}\geq S_{*} \frac{u_{0}^{2}+v^{2}_{0}}{[K(u_{0}, v_{0})]^{\frac{2}{2^{*}_{s}}}}.
\end{equation}
Putting together \eqref{FS1} and \eqref{FS2} we have the desired result.
\end{proof}

\section{Autonomous critical system}
In this section we deal with the existence of solutions of the autonomous system associated to \eqref{P}.
Fixed $\xi\in \R^{N}$, we consider the following critical autonomous system
\begin{equation}\label{P0}
\left\{
\begin{array}{ll}
 (-\Delta)^{s}u+V(\xi)u=Q_{u}(u, v)+\frac{1}{2^{*}_{s}}K_{u}(u, v)  &\mbox{ in } \R^{N}\\
 (-\Delta)^{s}u+W(\xi)v=Q_{v}(u, v)+\frac{1}{2^{*}_{s}}K_{v}(u, v) &\mbox{ in } \R^{N} \\
u, v>0 &\mbox{ in } \R^{N}.
\end{array}
\right.
\end{equation}
Let $\X_{0}=H^{s}(\R^{N})\times H^{s}(\R^{N})$ endowed with the following norm
$$
\|(u, v)\|^{2}_{\xi}=\int_{\R^{N}}  |(-\Delta)^{\frac{s}{2}}u|^{2}+ |(-\Delta)^{\frac{s}{2}}v|^{2}\, dx+\int_{\R^{N}} (V(\xi) u^{2}+W(\xi) v^{2}) dx. 
$$
We introduce the energy functional $\J_{\xi}: \X_{0}\rightarrow \R$ associated to \eqref{P0}, that is 
$$
\J_{\xi}(u, v)=\frac{1}{2}\|(u, v)\|^{2}_{\xi}-\int_{\R^{N}} Q(u, v)+\frac{1}{2^{*}_{s}} K(u, v)\, dx. 
$$
From the growth assumptions on $Q$ and $K$, it is clear that $\J_{\xi}$ is well defined and $\J_{\xi}\in C^{1}(\X_{0}, \R)$.
Let us denote by $m_{\xi}$ the ground state level of $\J_{\xi}$, that is
$$
m_{\xi}=\inf_{(u, v)\in \X_{0}\setminus \{0\}} \max_{t\geq 0} \J_{\xi}(tu, tv)>0.
$$

\noindent
We begin proving the following lemma which will be useful to prove that the weak limit of Palais-Smale sequences of $\J_{\xi}$ are nontrivial.
\begin{lemma}\label{lem4.2}
Let $\{(u_{n}, v_{n})\}\subset \X_{0}$ be a Palais-Smale sequence for $\J_{\xi}$ at the level $d<\frac{s}{N}\widetilde{S}_{K}^{\frac{N}{2s}}$. Then we have either
\begin{compactenum}[(i)]
\item $\|(u_{n}, v_{n})\|_{\xi}\rightarrow 0$, or
\item there exist a sequence $\{y_{n}\}\subset \R^{N}$ and $R, \gamma>0$ such that 
$$
\liminf_{n\rightarrow \infty}\int_{B_{R}(y_{n})} (|u_{n}|^{2}+|v_{n}|^{2}) dx\geq \gamma.
$$
\end{compactenum}
\end{lemma}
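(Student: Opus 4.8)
The plan is to argue by dichotomy on the concentration function
$$\Theta:=\lim_{n\to\infty}\sup_{y\in\R^{N}}\int_{B_{R}(y)}(|u_{n}|^{2}+|v_{n}|^{2})\,dx$$
(after passing to a subsequence so the limit exists), for a fixed $R>0$. If $\Theta>0$ then, up to a subsequence, we can find points $y_{n}\in\R^{N}$ realizing the supremum up to a factor, which gives exactly alternative $(ii)$ with $\gamma=\Theta/2$. So the whole content is to show that if $\Theta=0$ then alternative $(i)$ holds, i.e. $\|(u_{n},v_{n})\|_{\xi}\to 0$. Throughout I will use that a Palais--Smale sequence at level $d$ is bounded in $\X_{0}$: this follows in the standard way from $\J_{\xi}(u_{n},v_{n})-\frac{1}{p}\langle\J_{\xi}'(u_{n},v_{n}),(u_{n},v_{n})\rangle$ together with $(Q1)$, $(K1)$ and $p<2^{*}_{s}$, since the $Q$-terms come with coefficient $(\frac12-\frac1p)>0$ and the $K$-terms with coefficient $(\frac{1}{2^{*}_{s}}-\frac1p)\cdot\frac{1}{2^{*}_{s}}<0$ but controlled once one moves the $K$-term to the energy side — in fact the cleanest bound uses $\frac12-\frac{1}{2^{*}_{s}}=\frac{s}{N}>0$ on the critical term and $\frac12-\frac1p>0$ on the subcritical one. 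Hence $\|(u_{n},v_{n})\|_{\xi}\le C$.

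Assume now $\Theta=0$. Applying Lemma~\ref{lionslemma} to the bounded sequences $\{u_{n}\}$ and $\{v_{n}\}$ separately gives $u_{n}\to 0$ and $v_{n}\to 0$ strongly in $L^{t}(\R^{N})$ for every $t\in(2,2^{*}_{s})$. By $(Q2)$ the integrand $|Q_{u}(u_{n},v_{n})u_{n}|+|Q_{v}(u_{n},v_{n})v_{n}|$ is bounded by $C(|u_{n}|^{p}+|v_{n}|^{p})$ with $p\in(2,2^{*}_{s})$, so
$$\int_{\R^{N}}\big(Q_{u}(u_{n},v_{n})u_{n}+Q_{v}(u_{n},v_{n})v_{n}\big)\,dx\to 0,$$
and likewise $\int_{\R^{N}}Q(u_{n},v_{n})\,dx\to 0$ by the Euler identity \eqref{2.1}. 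Next, from $\langle\J_{\xi}'(u_{n},v_{n}),(u_{n},v_{n})\rangle=o(1)$ together with the vanishing of the $Q$-terms, and using \eqref{2.1} for $K$ (so $\int K_{u}u+K_{v}v\,dx=2^{*}_{s}\int K\,dx$), we get
$$\|(u_{n},v_{n})\|_{\xi}^{2}=\int_{\R^{N}}K(u_{n}^{+},v_{n}^{+})\,dx+o(1).$$
Set $\ell:=\lim_{n}\|(u_{n},v_{n})\|_{\xi}^{2}\ge 0$ (along a subsequence). Then also $\int_{\R^{N}}K(u_{n}^{+},v_{n}^{+})\,dx\to\ell$. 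By the definition of $\widetilde S_{K}$ — note that the Gagliardo seminorm of $(u_{n},v_{n})$ is $\le\|(u_{n},v_{n})\|_{\xi}^{2}$ — we have
$$\int_{\R^{N}}|(-\Delta)^{\frac s2}u_{n}|^{2}+|(-\Delta)^{\frac s2}v_{n}|^{2}\,dx\ \ge\ \widetilde S_{K}\Big(\int_{\R^{N}}K(u_{n}^{+},v_{n}^{+})\,dx\Big)^{\frac{2}{2^{*}_{s}}},$$
and passing to the limit, since the left side is $\le\ell+o(1)$, gives $\ell\ge\widetilde S_{K}\,\ell^{2/2^{*}_{s}}$. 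Hence either $\ell=0$, which is alternative $(i)$, or $\ell\ge\widetilde S_{K}^{N/2s}$.

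It remains to rule out the case $\ell\ge\widetilde S_{K}^{N/2s}$, and this is where the level restriction $d<\frac{s}{N}\widetilde S_{K}^{N/2s}$ enters. Using $\J_{\xi}(u_{n},v_{n})\to d$ together with $\int Q(u_{n},v_{n})\,dx\to 0$ and the relation $\|(u_{n},v_{n})\|_{\xi}^{2}=\int K(u_{n}^{+},v_{n}^{+})\,dx+o(1)$, I compute
$$d=\lim_{n}\Big(\tfrac12\|(u_{n},v_{n})\|_{\xi}^{2}-\int_{\R^{N}}Q(u_{n},v_{n})\,dx-\tfrac{1}{2^{*}_{s}}\int_{\R^{N}}K(u_{n}^{+},v_{n}^{+})\,dx\Big)=\Big(\tfrac12-\tfrac{1}{2^{*}_{s}}\Big)\ell=\tfrac{s}{N}\,\ell.$$
If $\ell\ge\widetilde S_{K}^{N/2s}$ this forces $d\ge\frac{s}{N}\widetilde S_{K}^{N/2s}$, contradicting the hypothesis on $d$. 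Therefore $\ell=0$, i.e. $\|(u_{n},v_{n})\|_{\xi}\to 0$, which is alternative $(i)$, and the dichotomy is proved. The main obstacle, and the only place requiring genuine care, is the bookkeeping that makes the $Q$-terms disappear cleanly in all three expressions (the energy, the derivative pairing, and the $K$-normalization) using only $(Q1)$–$(Q2)$ and the compactness coming from $\Theta=0$; once that is in place the critical term is handled exactly by the Sobolev-type inequality for $\widetilde S_{K}$ from Lemma~\ref{FSthm} and the elementary inequality $\ell\ge\widetilde S_{K}\ell^{2/2^{*}_{s}}$.
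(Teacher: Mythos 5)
Your proposal is correct and follows essentially the same route as the paper's proof: Lions-type vanishing (Lemma \ref{lionslemma}) kills the subcritical $Q$-terms, the Palais--Smale condition gives $\|(u_{n},v_{n})\|_{\xi}^{2}\to L$ and $\int K\to L$ with $d=\frac{s}{N}L$, and the definition of $\widetilde S_{K}$ yields $L\ge \widetilde S_{K}L^{2/2^{*}_{s}}$, so the level restriction forces $L=0$. The only cosmetic difference is that you also spell out the boundedness of the sequence and the dichotomy on the concentration function, which the paper takes for granted.
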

\begin{proof}
Suppose that $(ii)$ does not hold. Then, for any $R>0$, we have
$$
\lim_{n\rightarrow \infty} \sup_{y\in \R^{N}}\int_{B_{R}(y)} |u_{n}|^{2} dx=0=\lim_{n\rightarrow \infty} \sup_{y\in \R^{N}}\int_{B_{R}(y)} |v_{n}|^{2} dx.
$$
In view of Lemma \ref{lionslemma}, we can see that 
$$
u_{n}, v_{n}\rightarrow 0 \mbox{ in } L^{r}(\R^{N}) \quad \forall r\in (2, 2^{*}_{s}).
$$
This together with $(Q2)$, imply that 
$$
\int_{\R^{N}} Q(u_{n}, v_{n}) dx \rightarrow 0.
$$
Since $\{(u_{n}, v_{n})\}$ is bounded, we have $\langle \J'_{\xi}(u_{n}, v_{n}),(u_{n}, v_{n})\rangle\rightarrow 0$, so there exists $L\geq 0$ such that
\begin{equation}\label{4.1}
\|(u_{n}, v_{n})\|^{2}_{\xi}\rightarrow L \mbox{ and }  \int_{\R^{N}} K(u_{n}, v_{n}) \,dx\rightarrow L.
\end{equation}
Recalling that $\J_{\xi}(u_{n}, v_{n})\rightarrow d$, we can use \eqref{4.1} to deduce that $d=\frac{Ls}{N}$. 

On the other hand, from the definition of $\widetilde{S}_{K}$, we know that
$$
\|(u_{n}, v_{n})\|^{2}_{\xi}\geq \widetilde{S}_{K} \left(\int_{\R^{N}} K(u_{n}, v_{n})\,dx \right)^{\frac{2}{2^{*}_{s}}}
$$
so, by passing to the limit as $n\rightarrow \infty$ in the above relation, we can deduce that $L\geq \widetilde{S}_{K} L^{\frac{2}{2^{*}_{s}}}$. \\
If $L>0$, we get $Nd=sL\geq s \widetilde{S}_{K}^{\frac{N}{2s}}$, which gives a contradiction. Hence $L=0$ and $(i)$ holds.

\end{proof}

\noindent
Now, we are ready to demonstrate that the above critical autonomous system admits a nontrivial solution.
\begin{theorem}\label{prop4.3}
The problem \eqref{P0} has a weak solution.
\end{theorem}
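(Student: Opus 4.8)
The plan is to realize the solution as a mountain pass critical point of $\J_{\xi}$, recovering compactness from Lemma \ref{lem4.2} once we know the mountain pass level lies strictly below the threshold $\frac{s}{N}\widetilde{S}_{K}^{N/2s}$.

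As already observed in the introduction, $(H3)$, $(Q1)$, $(Q2)$ together with $(K1)$, $(K3)$ (and $(Q5)$, $(K6)$, which make the nonlinearities positive in the interior of $\R^{2}_{+}$) imply that $\J_{\xi}$ has the mountain pass geometry: $\J_{\xi}(0,0)=0$, there are $\rho,\alpha>0$ with $\J_{\xi}\ge\alpha$ on $\{\|(u,v)\|_{\xi}=\rho\}$, and $\J_{\xi}(te_{1},te_{2})\to-\infty$ as $t\to+\infty$ for a suitable pair $(e_{1},e_{2})$. Hence, by the mountain pass theorem without the Palais--Smale condition (deformation lemma/Ekeland), there is a sequence $\{(u_{n},v_{n})\}\subset\X_{0}$ with $\J_{\xi}(u_{n},v_{n})\to m_{\xi}$ and $\J_{\xi}'(u_{n},v_{n})\to 0$, and $m_{\xi}$ coincides with the Nehari-type level recalled above.

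The heart of the argument is the strict estimate $0<m_{\xi}<\frac{s}{N}\,\widetilde{S}_{K}^{\frac{N}{2s}}$. I would prove it by testing $\J_{\xi}$ along the ray $t\mapsto\J_{\xi}(tu_{0}\eta U_{\e},\,tv_{0}\eta U_{\e})$, where $(u_{0},v_{0})$ is the maximizer of $K(\cdot,\cdot)^{2/2^{*}_{s}}$ on the unit circle provided by Lemma \ref{FSthm}, $U_{\e}(x)=\e^{-(N-2s)/2}U(x/\e)$ is the Aubin--Talenti extremal for $S_{*}$, and $\eta$ is a fixed cut-off. Computing $\max_{t\ge 0}$ and invoking the classical asymptotics of $\int|(-\Delta)^{s/2}(\eta U_{\e})|^{2}$, $\int(\eta U_{\e})^{2^{*}_{s}}$ and $\int(\eta U_{\e})^{2}$ (the last of order $\e^{2s}$, $\e^{2s}|\log\e|$ or $\e^{N-2s}$ according to $N>4s$, $N=4s$ or $2s<N<4s$), together with the lower bound $\int Q(tu_{0}\eta U_{\e},tv_{0}\eta U_{\e})\,dx\ge\lambda u_{0}^{\tilde\alpha}v_{0}^{\tilde\beta}t^{q_{1}}\int(\eta U_{\e})^{q_{1}}\,dx$ from $(Q7)$, one checks that for $\e$ small the gain carried by the $Q$-term strictly beats the loss $\frac12 V(\xi)\int(\eta U_{\e})^{2}$; this is exactly where the dichotomy on $N$ and $q_{1}$ in $(Q7)$ — and, in the remaining range, the largeness of $\lambda$ — enters. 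I expect this level estimate to be the main technical obstacle.

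Finally, $(Q1)$, $(K1)$ and $p>2$ give, through $\J_{\xi}(u_{n},v_{n})-\frac1p\langle\J_{\xi}'(u_{n},v_{n}),(u_{n},v_{n})\rangle$ (where the $Q$-term cancels by the Euler identity \eqref{2.1}), that $\{(u_{n},v_{n})\}$ is bounded in $\X_{0}$. Since $m_{\xi}>0$, alternative $(i)$ in Lemma \ref{lem4.2} is impossible, so $(ii)$ holds; translating by $y_{n}$ and using the translation invariance of \eqref{P0}, the shifted sequence is still a bounded Palais--Smale sequence at level $m_{\xi}$, and, up to a subsequence, it converges weakly in $\X_{0}$ to some $(u,v)\neq(0,0)$, the non-vanishing $L^{2}$-mass surviving in the limit by the compact embedding $H^{s}_{loc}\hookrightarrow L^{2}_{loc}$ of Theorem \ref{Sembedding}. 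Passing to the limit in $\langle\J_{\xi}'(\cdot),\varphi\rangle\to0$ via a.e. convergence, the growth bounds $(Q2)$, $(K3)$, and a Brezis--Lieb argument for the critical term yields $\J_{\xi}'(u,v)=0$. Testing the two equations with $u^{-}$, $v^{-}$ and recalling that $Q,K$ vanish where a component is $\le 0$ gives $u,v\ge 0$; if $v\equiv 0$ then by $(Q1)$, $(Q4)$, $(K1)$, $(K5)$ we have $Q_{u}(u,0)=u^{p-1}Q_{u}(1,0)=0$ and $K_{u}(u,0)=u^{2^{*}_{s}-1}K_{u}(1,0)=0$, so $(-\Delta)^{s}u+V(\xi)u=0$ and $u\equiv 0$, a contradiction; symmetrically $u\not\equiv 0$ via $(Q3)$, $(K4)$. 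The strong maximum principle for $(-\Delta)^{s}$ then gives $u,v>0$ in $\R^{N}$, so $(u,v)$ is a weak solution of \eqref{P0}.
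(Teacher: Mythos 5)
Your proposal is correct and follows essentially the same route as the paper: the Brezis--Nirenberg-type estimate $m_{\xi}<\frac{s}{N}\widetilde{S}_{K}^{N/2s}$ via truncated Aubin--Talenti bubbles weighted by the maximizer of Lemma \ref{FSthm} and hypothesis $(Q7)$, then a mountain pass Palais--Smale sequence, exclusion of vanishing by Lemma \ref{lem4.2}, translation, and identification of a nontrivial nonnegative critical point. The only (harmless) variations are your use of the homogeneity of $Q_{u},K_{u}$ with $(Q4)$, $(K5)$ to rule out a vanishing component, where the paper uses the Euler identity \eqref{2.1}, and your appeal to Brezis--Lieb for the critical term where the paper passes to the limit directly.
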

\begin{proof}
Firstly, we show that 
\begin{equation}\label{FFc}
m_{\xi}<\frac{s}{N} \widetilde{S}_{K}^{\frac{N}{2s}}.
\end{equation}
By using the definition of $m_{\xi}$, it is enough to prove that there exists $(u, v)\in \X_{0}$ such that 
$$
\max_{t\geq 0} \J_{\xi}(tu, tv)<\frac{s}{N} \widetilde{S}_{K}^{\frac{N}{2s}}.
$$
Fix $\eta \in C^{\infty}_{0}(\R^{N})$ a cut-off function such that $0\leq \eta \leq 1$, $\eta=1$ on $B_{r}$ and $\eta=0$ on $\R^{N}\setminus B_{2r}$.

For $\varepsilon>0$, let us define $v_{\varepsilon}(x)=\eta(x)z_{\varepsilon}(x)$, where
$$
z_{\varepsilon}(x)=\frac{\kappa \varepsilon^{\frac{N-2s}{2}}}{(\varepsilon^{2}+|x|^{2})^{\frac{N-2s}{2}}}
$$
is a solution to
$$
(-\Delta)^{s}u=S_{*}|u|^{2^{*}_{s}-2}u \mbox{ in } \R^{N}
$$
and $\kappa$ is a suitable positive constant depending only on $N$ and $s$.\\
Now, we set
$$
u_{\varepsilon}=\frac{z_{\varepsilon}}{\left(\int_{\R^{N}} |z_{\varepsilon}|^{2^{*}_{s}} dx \right)^{\frac{1}{2^{*}_{s}}}}.
$$
Let us recall the following fundamental estimates for $u_{\e}$ (see Proposition $21$ and $22$ in \cite{SV}):
\begin{align}\label{BN1}
\int_{\R^{N}} |(-\Delta)^{\frac{s}{2}} u_{\e}|^{2} dx\leq S_{*}+O(\varepsilon^{N-2s}),
\end{align}
\begin{equation}\label{BN2}
\int_{\R^{N}} |u_{\varepsilon}|^{2} dx=
\left\{
\begin{array}{ll}
O(\varepsilon^{2s})  &\mbox{ if } N>4s \\
O(\varepsilon^{2s} |\log(\varepsilon)|) &\mbox{ if } N=4s \\
O(\varepsilon^{N-2s}) &\mbox{ if } N<4s
\end{array},
\right.
\end{equation}
and
\begin{equation}\label{BN3}
\int_{\R^{N}} |u_{\varepsilon}|^{q} dx=
\left\{
\begin{array}{ll}
O(\varepsilon^{\frac{2N-(N-2s)q}{2}})  &\mbox{ if } q>\frac{N}{N-2s} \\
O(|\log(\varepsilon)|\varepsilon^{\frac{N}{2}}) &\mbox{ if } q=\frac{N}{N-2s} \\
O(\varepsilon^{\frac{(N-2s)q}{2}}) &\mbox{ if } q<\frac{N}{N-2s}.
\end{array}
\right.
\end{equation}
By using Lemma \ref{FSthm}, we know that there exist $A, B\in \R$ such that $\widetilde{S}_{K}$ is attained by $(A u_{\e}, B u_{\e})$ and
\begin{equation}\label{MIO}
\widetilde{S}_{K}=S_{*} \frac{(A^{2}+B^{2})}{(\int_{\R^{N}} K(Az_{\e}, Bz_{\e}) \,dx)^{\frac{2}{2^{*}_{s}}}}.
\end{equation}
From $(Q7)$, we can see that
\begin{align*}
&\J_{\xi}(t A u_{\e}, t  B u_{\e})\\
&\leq \left[\frac{t^{2}}{2} (A^{2}+B^{2}) D_{\e}-\frac{t^{2^{*}_{s}}}{2^{*}_{s}} \int_{\R^{N}} K(A z_{\e}, B z_{\e})\, dx\right]- \lambda t^{q_{1}} A^{q_{1}} B^{q_{1}} \int_{\R^{N}} |u_{\e}|^{q_{1}} dx =: h_{\e}(t), 
\end{align*}
where 
$$
D_{\e}=\int_{\R^{N}} |(-\Delta)^{\frac{s}{2}} u_{\e}|^{2} dx+\int_{\R^{N}} \max\{V(\xi), W(\xi)\} u_{\e}^{2} dx.
$$
Let us denote by $t_{\e}>0$ the maximum point of $h_{\e}(t)$. Since $h'_{\e}(t_{\e})=0$, we have
$$
\bar{t}_{\e}=\left(\frac{D_{\e} (A^2+B^2)}{(\int_{\R^{N}} K(Az_{\e}, Bz_{\e})\, dx)^{\frac{2}{2^{*}_{s}}}}\right)^{\frac{N-2s}{4s}}\geq t_{\e}>0.
$$
Observing that $t\mapsto t^{2} D_{\e} \frac{(A^{2}+B^{2})}{2}-t^{2^{*}_{s}} \int_{\R^{N}} K(Az_{\e}, Bz_{\e}) \,dx$ is increasing in $(0, \bar{t}_{\e})$, we can deduce that
$$
\J_{\xi}(t A u_{\e}, t  B u_{\e})\leq \frac{s}{N}\left(\frac{D_{\e} (A^2+B^2)}{(\int_{\R^{N}}K(Az_{\e}, Bz_{\e}) \,dx)^{\frac{2}{2^{*}_{s}}}}\right)^{\frac{N}{2s}} -\lambda t^{q_{1}} A^{q_{1}} B^{q_{1}} \int_{\R^{N}} |u_{\e}|^{q_{1}} dx.
$$
Now, using \eqref{BN1} and recalling that $(a+b)^{r}\leq a^{r}+r(a+b)^{r-1} b$ for any $a, b>0$ and $r\geq 1$, we can see that
$$
D_{\e}^{N/2s}\leq S_{*}^{N/2s}+O(\e^{N-2s})+ C_{1} \int_{\R^{N}} |u_{\e}|^{2} dx,
$$
On the other hand, the fact that $h'_{\e}(t_{\e})=0$ and the mountain pass geometry of $\J_{\e}$ imply that 
$$
t_{\e}\geq \sigma \mbox{ for any } \e>0, 
$$
for some constant $\sigma>0$ independent of $\e$.\\ 
Then, by using \eqref{MIO}, we have
\begin{align*}
\J_{\xi}(t A u_{\e}, t  B u_{\e})\leq \frac{s}{N} \widetilde{S}_{K}^{\frac{N}{2s}}+O(\e^{N-2s})+ C_{2} \int_{\R^{N}} |u_{\e}|^{2} dx -\lambda C_{3}\int_{\R^{N}} |u_{\e}|^{q_{1}} dx,
\end{align*}
where $C_{2}, C_{3}>0$ are independent of $\e$ and $\lambda$.\\
Now, we consider the following cases: \\
If $N>4s$, then $q_{1}>\frac{N}{N-2s}$. Then, by using (\ref{BN2}) and (\ref{BN3}), we have 
\begin{align*}
\sup_{t\geq 0} h_{\e}(t)&\leq \frac{s}{N} \widetilde{S}_{K}^{\frac{N}{2s}}+O(\varepsilon^{N-2s})+O(\varepsilon^{2s})-\lambda O(\varepsilon^{\frac{2N-(N-2s)q_{1}}{2}}).
\end{align*}
Since $\frac{2N-(N-2s)q_{1}}{2}<2s<N-2s$, we get the conclusion for $\varepsilon$ small enough. \\
When $N=4s$, then $q_{1}\in (2, 4)$ and in particular $q_{1}>\frac{N}{N-2s}=2$, so from (\ref{BN2}) and (\ref{BN3}) we infer 
\begin{align*}
\sup_{t\geq 0} h_{\e}(t)&\leq\frac{s}{N} \widetilde{S}_{K}^{\frac{N}{2s}}+O(\varepsilon^{2s})+O(\varepsilon^{2s}|\log(\varepsilon)|)-\lambda O(\varepsilon^{4s-sq_{1}}).
\end{align*}
Since
$$
\lim_{\varepsilon\rightarrow 0} \frac{\varepsilon^{4s-sq_{1}}}{\varepsilon^{2s}(1+|\log(\varepsilon)|)}=\infty,
$$
we again get the assert for $\varepsilon$ small enough. \\
If $2s<N<4s$ and $q_{1}\in (\frac{4s}{N-2s}, 2^{*}_{s})$, then  $q_{1}>\frac{N}{N-2s}$. Hence 
\begin{align*}
\sup_{t\geq 0} h_{\e}(t)&\leq \frac{s}{N} \widetilde{S}_{K}^{\frac{N}{2s}}+O(\varepsilon^{N-2s})+O(\varepsilon^{N-2s})-\lambda O(\varepsilon^{\frac{2N-(N-2s)q_{1}}{2}})
\end{align*}
and we obtain the conclusion for $\e$ sufficiently small since $\frac{2N-(N-2s)q_{1}}{2}<N-2s$. \\
If $2s<N<4s$ and $q_{1}\in (2, \frac{4s}{N-2s}]$, we argue as before and by using (\ref{BN3}) we get
\begin{equation*}
\sup_{t\geq 0} h_{\e}(t)\leq 
\left\{
\begin{array}{ll}
\frac{s}{N} \widetilde{S}_{K}^{\frac{N}{2s}}+O(\varepsilon^{N-2s})-\lambda O(\varepsilon^{\frac{2N-(N-2s)q_{1}}{2}})  &\mbox{ if } q_{1}>\frac{N}{N-2s} \\
\frac{s}{N} \widetilde{S}_{K}^{\frac{N}{2s}}+O(\varepsilon^{N-2s})-\lambda O(|\log(\varepsilon)|\varepsilon^{\frac{N}{2}}) &\mbox{ if } q_{1}=\frac{N}{N-2s} \\
\frac{s}{N} \widetilde{S}_{K}^{\frac{N}{2s}}+O(\varepsilon^{N-2s})-\lambda O(\varepsilon^{\frac{(N-2s)q_{1}}{2}}) &\mbox{ if } q_{1}<\frac{N}{N-2s}. 
\end{array}
\right.
\end{equation*}
Then, there exists $\lambda_{0}>0$ large enough such that for any $\lambda\geq \lambda_{0}$  and $\e>0$ small it holds
\begin{align*}
\sup_{t\geq 0} h_{\e}(t)< \frac{s}{N} \widetilde{S}_{K}^{\frac{N}{2s}}.
\end{align*}
Taking into account the above estimates, we can infer that for any $\e>0$ sufficiently small 
\begin{align*}
\max_{t\geq 0} \J_{\xi}(t A u_{\e}, t  B u_{\e})\leq \max_{t\geq 0} h_{\e}(t)=h_{\e}(t_{\e})<\frac{s}{N} \widetilde{S}_{K}^{\frac{N}{2s}},
\end{align*}
that is \eqref{FFc} holds.

Now, we observe that $\J_{\xi}$ has a mountain pass geometry. Indeed $\J_{\xi}(0, 0)=0$.
From the assumptions $(Q2)$, $(K3)$ and \eqref{2.1}, and by applying Theorem \ref{Sembedding}, we can see that
$$
\J_{\xi}(u, v)\geq \frac{1}{2}\|(u, v)\|^{2}_{\xi}-C_{1}\|(u, v)\|^{p}_{\xi}-C_{2}\|(u, v)\|^{2^{*}_{s}}_{\xi}
$$ 
so we can find $\alpha, \beta>0$ such that $\J_{\xi}(u, v)\geq \beta$ if $\|(u, v)\|_{\xi}=\alpha$.
Finally, if we take $(\phi_{1}, \phi_{2})\in \X_{0}$ such that $K(\phi_{1}, \phi_{2})>0$, we can use $(Q1)$ and $(K1)$ to see that
\begin{align*}
\J_{\xi}(t u, t v)\leq \frac{t^{2}}{2} \|(\phi_{1}, \phi_{2})\|^{2}_{\xi}-t^{p} \int_{\R^{N}} Q(\phi_{1}, \phi_{2}) dx -\frac{t^{2^{*}_{s}}}{2^{*}_{s}} \int_{\R^{N}} K(\phi_{1}, \phi_{2}) dx\rightarrow -\infty 
\end{align*}
as $t\rightarrow \infty$.
Then, we can use Theorem $1.15$ in \cite{W}, and in view of \eqref{FFc}, we can find  a sequence $\{(u_{n}, v_{n})\}\subset \X_{0}$ such that 
$$
\J_{\xi}(u_{n}, v_{n})\rightarrow m_{\xi}<\frac{s}{N} \widetilde{S}_{K}^{\frac{N}{2s}} \mbox{ and }  \J'_{\xi}(u_{n}, v_{n})\rightarrow 0.
$$
Clearly, $(u_{n}, v_{n})$ is bounded in $\X_{0}$, so we may assume that $(u_{n}, v_{n})\rightharpoonup (u, v)$ in $\X_{0}$ and $u_{n}\rightarrow u$, $v_{n}\rightarrow v$ in $L^{q}_{loc}(\R^{N})$ for any $q\in [2, 2^{*}_{s})$ (by Theorem \ref{Sembedding}). By using $(Q2)$ and $(K3)$, we deduce that
$$
\int_{\R^{N}} (Q_{u}(u_{n}, v_{n}) \phi+Q_{v}(u_{n}, v_{n}) \psi) \, dx\rightarrow   \int_{\R^{N}} (Q_{u}(u, v) \phi+Q_{v}(u, v) \psi) \, dx
$$
and
$$
\int_{\R^{N}} (K_{u}(u_{n}, v_{n}) \phi+K_{v}(u_{n}, v_{n}) \psi) \, dx\rightarrow \int_{\R^{N}}  (K_{u}(u, v) \phi+K_{v}(u, v) \psi) \, dx 
$$
for any $\phi, \psi\in C^{\infty}_{0}(\R^{N})$.
Therefore, we deduce that $\langle \J'_{\xi}(u, v), (\phi, \psi)\rangle=0$ for any $(\phi, \psi)\in \X_{0}$, that is $(u, v)$ is a critical point of $\J_{\xi}$.
Now, we distinguish two cases.
\begin{compactenum}[$(a)$]
\item $u=v=0$;
\item $u\neq 0$, $u\geq 0$ and $v\neq 0$, $v\geq 0$.
\end{compactenum}
Assume that $(a)$ holds. By Lemma \ref{lem4.2}, we have the following two alternatives:
\begin{compactenum}[$(i)$]
\item $\|(u_{n}, v_{n})\|_{\xi}\rightarrow 0$, or
\item there exist a sequence $\{y_{n}\}\subset \R^{N}$ and $R, \gamma>0$ such that 
$$
\liminf_{n\rightarrow \infty}\int_{B_{R}(y_{n})} (|u_{n}|^{2}+|v_{n}|^{2}) dx\geq \gamma.
$$
\end{compactenum}
Since $\J_{\xi}(u_{n}, v_{n})\rightarrow m_{\xi}>0$, it is clear that $(i)$ cannot occur. Therefore,
there exist a sequence $(y_{n})\subset \R^{N}$ and $R, \gamma>0$ such that 
\begin{equation}\label{c}
\liminf_{n\rightarrow \infty}\int_{B_{R}(y_{n})} (|u_{n}|^{2}+|v_{n}|^{2}) dx\geq \gamma.
\end{equation}
Set $(\tilde{u}_{n}(x), \tilde{v}_{n}(x))=(u_{n}(x+y_{n}), v_{n}(x+y_{n}))$.
Since $\{(\tilde{u}_{n}, \tilde{v}_{n})\}$ is bounded, we may assume that $(\tilde{u}_{n}, \tilde{v}_{n})\rightharpoonup (\tilde{u}, \tilde{v})$ in $\X_{0}$. Taking into account $\J_{\xi}(\tilde{u}_{n}, \tilde{v}_{n})=\J_{\xi}(u_{n}, v_{n})$ and $\|\J_{\xi}'(\tilde{u}_{n}, \tilde{v}_{n})\|=o_{n}(1)$, we deduce that $(\tilde{u}_{n}, \tilde{v}_{n})$ is a Palais-Smale of $\J_{\xi}$ at the level $m_{\xi}$ and $(\tilde{u}, \tilde{v})$ is a critical point of $\J_{\xi}$.
From \eqref{c}, we deduce that 
\begin{equation}\label{d}
\liminf_{n\rightarrow \infty}\int_{B_{R}(0)} (|\tilde{u}_{n}|^{2}+|\tilde{v}_{n}|^{2}) dx\geq \gamma.
\end{equation}
Then, by $(Q4)$, we can see that $\tilde{u}$ and $\tilde{v}$ are not zero. 
Indeed, if $u=0$ and $v\neq 0$, then follows by  $\langle \J'_{\xi}(\tilde{u}, \tilde{v}), (\tilde{u}, \tilde{v})\rangle=0$ and \eqref{2.1} that
$$
\|v\|^{2}_{\xi}=\int_{\R^{N}} Q_{v}(0, v)v+\frac{1}{2^{*}_{s}} K_{v}(0, v)v\, dx=\int_{\R^{N}} pQ(0, v)+K(0, v)\, dx=0
$$
which gives a contradiction.\\
Now, we prove that $\J_{\xi}(\tilde{u}, \tilde{v})=m_{\xi}$. In order to achieve our aim, we will show that $(\tilde{u}_{n}, \tilde{v}_{n})\rightarrow (\tilde{u}, \tilde{v})$ in $\X_{0}$ as $n\rightarrow \infty$. Fix $\theta\in (2, p)$.
Let us observe that, from the lower semicontinuity continuity of the $\X_{0}$-norm, we get
\begin{equation}\label{ortega}
\|(\tilde{u}, \tilde{v})\|_{\xi}\leq \liminf_{n\rightarrow \infty} \|(\tilde{u}_{n}, \tilde{v}_{n})\|_{\xi}.
\end{equation}
Then, if we do not have the equality in \eqref{ortega}, by Fatou Lemma and \eqref{2.1}, we can see that 
\begin{align*}
m_{\xi}&\leq \J_{\xi}(\tilde{u}, \tilde{v}) \\
&=\J_{\xi}(\tilde{u}, \tilde{v})-\frac{1}{\theta} \langle \J_{\xi}'(\tilde{u}, \tilde{v}), (\tilde{u}, \tilde{v})\rangle \\
&=\left(\frac{1}{2}-\frac{1}{\theta}\right) \|(\tilde{u}, \tilde{v})\|^{2}_{\xi}+
\int_{\R^{N}} \left(\frac{p}{\theta}-1\right) Q(\tilde{u}, \tilde{v}) \, dx+\int_{\R^{N}} \left(\frac{1}{\theta}-\frac{1}{2^{*}_{s}}\right) K(\tilde{u}, \tilde{v}) \, dx \\
&< \liminf_{n\rightarrow \infty} \Bigl[ \left(\frac{1}{2}-\frac{1}{\theta}\right) \|(\tilde{u}_{n}, \tilde{v}_{n})\|^{2}_{\xi}+
\int_{\R^{N}} \left(\frac{p}{\theta}-1\right) Q(\tilde{u}_{n}, \tilde{v}_{n}) \, dx\\
&\quad +\int_{\R^{N}} \left(\frac{1}{\theta}-\frac{1}{2^{*}_{s}}\right) K(\tilde{u}_{n}, \tilde{v}_{n}) \, dx   \Bigr] \\
&=\liminf_{n\rightarrow \infty} \left[\J_{\xi}(\tilde{u}_{n}, \tilde{v}_{n})-\frac{1}{\theta} \langle \J'(\tilde{u}_{n}, \tilde{v}_{n}), (\tilde{u}_{n}, \tilde{v}_{n})\rangle \right] \\
&=m_{\xi}
\end{align*} 
which gives a contradiction. As a consequence, $\|(\tilde{u}_{n}, \tilde{v}_{n})\|_{\xi} \rightarrow \|(\tilde{u}, \tilde{v})\|_{\xi}$, which implies $(\tilde{u}_{n}, \tilde{v}_{n})\rightarrow (\tilde{u}, \tilde{v})$ in $\X_{0}$. Therefore, $\J_{\xi}(\tilde{u}, \tilde{v})=m_{\xi}$ and this ends the proof.

\end{proof}

\begin{remark}\label{remark}
We note that $\tilde{u}$ and $\tilde{v}$ are continuous and positive in $\R^{N}$.\\
Indeed, by using $\langle \J'_{\xi}(u, v),(u^{-}, v^{-})\rangle=0$, and recalling that $(x-y)(x^{-}-y^{-})\leq -|x^{-}-y^{-}|^{2}$ for any $x, y\in \R$, where $x^{-}=\max\{-x, 0\}$, we can see that
\begin{align*}
0&=\langle \J'_{\xi}(u, v),(u^{-}, v^{-})\rangle\\
&=\iint_{\R^{2N}} \left[\frac{(u(x)-u(y))(u^{-}(x)-u^{-}(y))}{|x-y|^{N+2s}}+ \frac{(v(x)-v(y))(v^{-}(x)-v^{-}(y))}{|x-y|^{N+2s}}\right] dx dy \\
&+\int_{\R^{N}} (V(\xi) u u^{-}+W(\xi) v v^{-}) dx -\int_{\R^{N}} (Q_{u}(u, v)u^{-}+Q_{v}(u, v) v^{-}) dx \\
&-\frac{1}{2^{*}_{s}}\int_{\R^{N}} (K_{u}(u, v)u^{-}+K_{v}(u, v) v^{-}) dx  \\
&\leq -\iint_{\R^{2N}} \left[\frac{|u^{-}(x)-u^{-}(y)|^{2}}{|x-y|^{N+2s}} + \frac{|v^{-}(x)-v^{-}(y)|^{2}}{|x-y|^{N+2s}}\right] dx dy \\
& -\int_{\R^{N}} (V(\xi) (u^{-})^{2}+W(\xi) (v^{-})^{2}) dx = -\|(u^{-}, v^{-})\|^{2}_{\xi},
\end{align*} 
where we used the fact that $Q_{u}=K_{u}=0$ on $(-\infty, 0)\times \R$ and $Q_{v}=K_{v}=0$ on $\R\times (-\infty, 0)$. Therefore $u, v\geq 0$ in $\R^{N}$.
In view of $(Q2)$ and $(K3)$, we can see that $z=u+v\geq 0$ satisfies $(-\Delta)^{s}z+\min\{V(\xi), W(\xi)\} z\leq C_{0} (z^{p-1}+z^{2^{*}_{s}-1})$ in $\R^{N}$, for some $C_{0}>0$ given by $(Q2)$ and $(K3)$,
hence, by using a Moser iteration argument (see for instance Proposition $5.1.1.$ in \cite{DPMV} or Theorem $1.2$ in \cite{A3}) we can prove that $z\in L^{\infty}(\R^{N})$, which implies that $u, v\in L^{\infty}(\R^{N})$. Then $\nabla Q(u,v)$ and $\nabla K(u,v)$ are bounded, and by applying Proposition $2.9$ in \cite{Silvestre} we have $u, v\in C^{0, \alpha}(\R^{N})\cap L^{\infty}(\R^{N})$. From the Harnack inequality \cite{CabSir} we get $u, v>0$ in $\R^{N}$.
\end{remark}

Let us observe that critical points of $\J_{\xi}$ belong to the Nehari manifold
$$
\N_{\xi}=\{(u, v)\in \X_{0}\setminus \{(0, 0)\}:  \langle \J'_{\xi}(u, v),(u, v)\rangle=0\}.
$$
Then, we can see that
$$
m_{\xi}=\inf_{(u, v)\in \N_{\xi}} \J_{\xi}(u, v)=C(\xi).
$$
Moreover, from Theorem \ref{prop4.3}, we deduce that
$$
M=\left\{x\in \R^{N}: C(x)=\inf_{\xi\in \R^{N}} C(\xi)\right\}\neq \emptyset.
$$ 
Arguing as in the proofs of Lemma $2.1$ and Lemma $2.2$ in \cite{A5}, it is easy to show that the following results hold.
\begin{lemma}\label{C0}
The map $\xi\mapsto C(\xi)$ is continuous.
\end{lemma}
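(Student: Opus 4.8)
The plan is to exploit the identification $C(\xi) = m_\xi = \inf_{\N_\xi} \J_\xi$ together with the fact, established in Theorem \ref{prop4.3}, that the infimum is attained: for every $\xi \in \R^N$ there is a ground state $(u_\xi, v_\xi) \in \N_\xi$ with $\J_\xi(u_\xi, v_\xi) = C(\xi)$. Fix $\xi \in \R^N$ and a sequence $\xi_n \to \xi$. Continuity will follow from two one-sided estimates: $\limsup_n C(\xi_n) \le C(\xi)$ and $\liminf_n C(\xi_n) \ge C(\xi)$.

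\emph{Upper semicontinuity.} Take a ground state $(u, v)$ for $\J_\xi$. For each $n$, since $K(u^+,v^+) = K(u,v) > 0$ on a set of positive measure (as $(u,v)$ is a nontrivial nonnegative critical point, cf. Remark \ref{remark}), there is a unique $t_n > 0$ with $(t_n u, t_n v) \in \N_{\xi_n}$; the defining equation for $t_n$ reads
$$
t_n^{2}\,\|(u,v)\|_{\xi_n}^{2} = p\, t_n^{p}\!\int_{\R^N} Q(u,v)\,dx + t_n^{2^*_s}\!\int_{\R^N} K(u,v)\,dx,
$$
using \eqref{2.1}. Because $\|(u,v)\|_{\xi_n}^{2} \to \|(u,v)\|_{\xi}^{2}$ (the difference involves only $\int (V(\xi_n)-V(\xi))u^2 + (W(\xi_n)-W(\xi))v^2\,dx \to 0$ by $(H3)$, boundedness of $V,W$ on compacts near $\xi$, and dominated convergence — or directly by continuity of $V$ and $W$), and since $t \mapsto \J_\xi(tu,tv)$ has a strict interior maximum, $t_n \to 1$. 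Hence
$$
C(\xi_n) \le \J_{\xi_n}(t_n u, t_n v) \longrightarrow \J_\xi(u,v) = C(\xi),
$$
which gives $\limsup_n C(\xi_n) \le C(\xi)$.

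\emph{Lower semicontinuity.} Let $(u_n, v_n) \in \N_{\xi_n}$ be a ground state for $\J_{\xi_n}$, so $\J_{\xi_n}(u_n,v_n) = C(\xi_n)$. First one checks that $\{(u_n,v_n)\}$ is bounded in $\X_0$ and bounded away from $0$: since $\xi_n \to \xi$, the norms $\|\cdot\|_{\xi_n}$ are uniformly equivalent to a fixed norm (because $V(\xi_n), W(\xi_n)$ lie in a compact subinterval of $(0,\infty)$ by $(H3)$ and continuity), so the standard Nehari estimates — $C(\xi_n)$ uniformly bounded above by the upper semicontinuity just proved, and $\|(u_n,v_n)\|_{\xi_n} \ge \alpha_0 > 0$ uniformly from $(Q2)$, $(K3)$ and Theorem \ref{Sembedding} — apply uniformly in $n$. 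Now rescale each $(u_n,v_n)$ by the unique $\tau_n > 0$ making it lie on $\N_\xi$; by the same argument as above $\tau_n \to 1$, so
$$
C(\xi) \le \J_\xi(\tau_n u_n, \tau_n v_n) = \J_{\xi_n}(u_n,v_n) + o(1) = C(\xi_n) + o(1),
$$
where the $o(1)$ collects the terms $\frac{\tau_n^2}{2}(\|\cdot\|_\xi^2 - \|\cdot\|_{\xi_n}^2)$ and the $(\tau_n^p - 1)$, $(\tau_n^{2^*_s}-1)$ factors, all of which tend to $0$ by the uniform bounds on $(u_n,v_n)$ and $\tau_n \to 1$. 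This yields $\liminf_n C(\xi_n) \ge C(\xi)$, and combining the two inequalities completes the proof.

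\emph{Main obstacle.} The delicate point is the uniform control needed for the lower-semicontinuity half: one must ensure the ground states $(u_n, v_n)$ stay in a fixed bounded set of $\X_0$ uniformly in $n$ and stay uniformly away from the origin, so that the rescaling factors $\tau_n$ genuinely converge to $1$ and the energy differences vanish. This is where the hypothesis $(H3)$ (positivity of the potentials) and the uniform equivalence of the norms $\|\cdot\|_{\xi_n}$ as $\xi_n \to \xi$ are essential; everything else is a routine Nehari-manifold computation mirroring Lemma $2.1$ of \cite{A5}.
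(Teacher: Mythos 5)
Your proposal is correct, and it is essentially the argument the paper delegates to Lemmas 2.1--2.2 of \cite{A5}: project the ground state at $\xi$ onto $\N_{\xi_n}$ (and the ground states at $\xi_n$ onto $\N_\xi$), observe that the projection parameters tend to $1$ because the norms $\|\cdot\|_{\xi_n}$ differ from $\|\cdot\|_\xi$ only through the continuous constants $V(\xi_n), W(\xi_n)$, and conclude by the two one-sided inequalities. The uniform bounds you flag in the lower-semicontinuity step (boundedness of the ground states via the $\J_{\xi_n}-\frac1\theta\langle\J'_{\xi_n},\cdot\rangle$ identity, and the uniform lower bound on $\|\cdot\|_{\xi_n}$ on the Nehari manifold) are exactly the right ingredients and are available from $(H3)$, $(Q2)$, $(K3)$ and Theorem \ref{Sembedding}.
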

\begin{lemma}\label{C00}
$C^{*}=C(x_{0})=\inf_{\xi\in \Lambda} C(\xi)<\min_{\xi\in \partial \Lambda} C(\xi)$.
\end{lemma}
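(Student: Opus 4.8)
The plan is to derive Lemma \ref{C00} from a strict monotonicity property of the map $\xi\mapsto C(\xi)=m_{\xi}$ in the values of the potentials. First I would establish the following claim: if $\xi_{1},\xi_{2}\in\R^{N}$ satisfy $V(\xi_{1})\le V(\xi_{2})$ and $W(\xi_{1})\le W(\xi_{2})$, then $C(\xi_{1})\le C(\xi_{2})$, with strict inequality as soon as one of the two inequalities between potentials is strict. The non-strict part is immediate from the mountain-pass characterization: for every $(u,v)\in\X_{0}\setminus\{0\}$ and every $t\ge0$ one has $\J_{\xi_{1}}(tu,tv)\le\J_{\xi_{2}}(tu,tv)$, since passing from $\xi_{2}$ to $\xi_{1}$ only decreases the two $L^{2}$-potential terms; hence $\max_{t\ge0}\J_{\xi_{1}}(tu,tv)\le\max_{t\ge0}\J_{\xi_{2}}(tu,tv)$, and taking the infimum over $(u,v)$ gives $C(\xi_{1})\le C(\xi_{2})$.

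For the strict part I would test with a ground state of the ``larger'' problem. By Theorem \ref{prop4.3} and Remark \ref{remark} there is $(u_{2},v_{2})\in\N_{\xi_{2}}$ with $u_{2},v_{2}>0$ in $\R^{N}$ and $\J_{\xi_{2}}(u_{2},v_{2})=m_{\xi_{2}}=C(\xi_{2})$. Since $u_{2},v_{2}>0$, hypotheses $(Q5)$ and $(K6)$ give $\int_{\R^{N}}Q(u_{2},v_{2})\,dx>0$ and $\int_{\R^{N}}K(u_{2},v_{2})\,dx>0$, so the fiber map $t\mapsto\J_{\xi_{1}}(tu_{2},tv_{2})$ is positive for small $t>0$ and tends to $-\infty$ as $t\to\infty$; thus it attains its maximum at some $t_{1}>0$, and $(t_{1}u_{2},t_{1}v_{2})\in\N_{\xi_{1}}$. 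Since only the quadratic potential terms differ, the chain of inequalities would read
\begin{align*}
C(\xi_{1})&\le\J_{\xi_{1}}(t_{1}u_{2},t_{1}v_{2})=\J_{\xi_{2}}(t_{1}u_{2},t_{1}v_{2})+\frac{t_{1}^{2}}{2}\int_{\R^{N}}\big[(V(\xi_{1})-V(\xi_{2}))u_{2}^{2}+(W(\xi_{1})-W(\xi_{2}))v_{2}^{2}\big]\,dx\\
&<\J_{\xi_{2}}(t_{1}u_{2},t_{1}v_{2})\le\max_{t\ge0}\J_{\xi_{2}}(tu_{2},tv_{2})=\J_{\xi_{2}}(u_{2},v_{2})=C(\xi_{2}),
\end{align*}
the strict inequality being forced by $t_{1}>0$, by $u_{2},v_{2}>0$ a.e., and by the strictness of one of $V(\xi_{1})<V(\xi_{2})$, $W(\xi_{1})<W(\xi_{2})$.

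With this at hand the conclusion follows quickly. By $(H3)$ one has $V(\xi)\ge V(x_{0})$ and $W(\xi)\ge W(x_{0})$ for every $\xi\in\R^{N}$, so the non-strict monotonicity gives $C(x_{0})\le C(\xi)$ for all $\xi$; hence $C(x_{0})=\inf_{\xi\in\R^{N}}C(\xi)=C^{*}$, and recalling that $x_{0}\in\Lambda$ also $C(x_{0})=\inf_{\xi\in\Lambda}C(\xi)$. On the other hand, for $\xi\in\partial\Lambda$ hypotheses $(H1)$ and $(H2)$ give $V(x_{0})<\rho_{0}\le V(\xi)$ and $W(x_{0})<\rho_{0}\le W(\xi)$, so the strict monotonicity yields $C(x_{0})<C(\xi)$ for every $\xi\in\partial\Lambda$. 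Since $C(\cdot)$ is continuous (Lemma \ref{C0}) and $\partial\Lambda$ is compact, $\min_{\xi\in\partial\Lambda}C(\xi)$ is attained at some $\bar\xi$, and therefore $C(x_{0})<C(\bar\xi)=\min_{\xi\in\partial\Lambda}C(\xi)$, which completes the proof.

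I expect the main obstacle to be the strict monotonicity step: one needs the ground state $(u_{2},v_{2})$ of the ``larger'' problem to be strictly positive so that the difference of the potential terms is genuinely negative rather than merely nonpositive — this is precisely what Remark \ref{remark} supplies — and one must check that the fiber map of the ``smaller'' problem still has an interior positive maximum, which relies on $(Q5)$ and $(K6)$. Everything else (continuity of $C$, compactness of $\partial\Lambda$, and the reductions from $(H1)$–$(H3)$) is routine and parallels Lemma $2.1$ and Lemma $2.2$ in \cite{A5}.
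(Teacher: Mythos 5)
Your proof is correct, and it is essentially the standard monotonicity argument that the paper itself omits, deferring instead to Lemmas $2.1$--$2.2$ of \cite{A5}: compare the mountain-pass levels of the two autonomous functionals by testing the ``smaller'' one along the fiber of a strictly positive ground state of the ``larger'' one, the strict positivity (Remark \ref{remark}) and $(Q5)$, $(K6)$ guaranteeing both an interior fiber maximum and a genuinely negative potential defect. The reduction to $(H1)$--$(H3)$ and the compactness of $\partial\Lambda$ are handled as in the cited reference, so nothing is missing.
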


\section{The critical modified problem}
As in \cite{AFF2}, to study solutions of problem \eqref{P} we define a suitable penalization function.
First of all, we can note that by using the change of variable $z\mapsto \e x$ in \eqref{P}, we can consider the following rescaled system
\begin{equation}\label{P'}
\left\{
\begin{array}{ll}
 (-\Delta)^{s}u+V(\e x)u=Q_{u}(u, v)+\frac{1}{2^{*}_{s}} K_{u}(u, v)  &\mbox{ in } \R^{N}\\
 (-\Delta)^{s}u+W(\e x) v=Q_{v}(u, v)+\frac{1}{2^{*}_{s}} K_{v}(u, v)  &\mbox{ in } \R^{N} \\
u, v>0 &\mbox{ in } \R^{N}.
\end{array}
\right.
\end{equation}
Let $a>0$ and we take a non-increasing function $\eta: \R\rightarrow \R$ such that 
\begin{equation}\label{3}
\eta=1 \mbox{ on } (-\infty, a], \, \eta=0 \mbox{ on } [5a, \infty), \, |\eta'(t)|\leq \frac{C}{a}, \, \mbox{ and } |\eta''(t)|\leq \frac{C}{a^{2}}.
\end{equation} 
Then, we introduce the following function $\hat{Q}: \R^{2}\rightarrow \R$ by setting
$$
\hat{Q}(u, v)=\eta(|(u, v)|) \left[Q(u, v)+\frac{1}{2^{*}_{s}} K(u, v)\right] +(1-\eta(|(u, v)|) A(u^{2}+v^{2})
$$
where 
$$
A=\max\left\{ \frac{Q(u, v)+\frac{1}{2^{*}_{s}} K(u, v)}{u^{2}+v^{2}}: (u, v)\in \R^{2}, a\leq |(u, v)|\leq 5a \right\}>0.
$$
Let us observe that $A\rightarrow 0$ as $a\rightarrow 0^{+}$, so we may assume that $A<\frac{\alpha}{4}$, where $\alpha=\min\{V(x_{0}), W(x_{0})\}$.
Now, we define $H: \R^{N}\times \R^{2}\rightarrow \R$ by setting
$$
H(x, u, v)=\chi_{\Lambda}(x) \left[Q(u, v)+\frac{1}{2^{*}_{s}} K(u, v)\right]+(1-\chi_{\Lambda}(x)) \hat{Q}(u, v).
$$
As in \cite{AFF2}, we can prove the following useful properties of the penalized function $H$.
\begin{lemma}\label{lem2.2Alves}
The function $H$ satisfies the following estimates
\begin{equation}\label{2.4A}
pH(x, u, v)\leq u H_{u}(x, u, v)+v H_{v}(x, u, v) \mbox{ for any } x\in \Lambda,
\end{equation}
\begin{equation}\label{2.5A}
2H(x, u, v)\leq u H_{u}(x, u, v)+v H_{v}(x, u, v) \mbox{ for any } x\in \R^{N}\setminus \Lambda.
\end{equation}
Moreover, for any $k>0$ fixed, we can choose $a>0$ sufficiently small, such that 
\begin{equation}\label{2.6A}
u H_{u}(x, u, v)+v H_{v}(x, u, v)\leq \frac{1}{k} (V(x) u^{2}+W(x) v^{2}) \mbox{ for any } x\in \R^{N}\setminus \Lambda.
\end{equation}
\end{lemma}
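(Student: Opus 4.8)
The plan is to use that on each of the two regions $\Lambda$ and $\R^{N}\setminus\Lambda$ the function $H(x,\cdot)$ coincides with an explicit function, so that everything reduces to the Euler identity \eqref{2.1} for homogeneous functions plus elementary estimates. On $\Lambda$ one has $H(x,u,v)=Q(u,v)+\tfrac{1}{2^{*}_{s}}K(u,v)$; applying \eqref{2.1} to the $p$-homogeneous $Q$ and the $2^{*}_{s}$-homogeneous $K$ gives $uH_{u}+vH_{v}=pQ(u,v)+K(u,v)$, and since $p<2^{*}_{s}$ and $Q,K\ge0$ (by $(Q5)$--$(Q6)$ and $(K6)$--$(K7)$) this is $\ge pQ+\tfrac{p}{2^{*}_{s}}K=pH$, which is \eqref{2.4A}.

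The substance is on $\R^{N}\setminus\Lambda$, where $H(x,\cdot)=\hat Q$. First I would note that $\hat Q\in C^{1}(\R^{2})$, since the composition $\eta(|(u,v)|)$ is $\equiv1$ near the origin and a $C^{1}$ function of the smooth map $(u,v)\mapsto|(u,v)|$ away from it. Writing $r=|(u,v)|$ and differentiating, using $\partial_{u}\eta(r)=\eta'(r)u/r$, $u\,\partial_{u}r+v\,\partial_{v}r=r$ and \eqref{2.1}, one obtains
\[ u\hat Q_{u}+v\hat Q_{v}=\eta'(r)\,r\Bigl[Q+\tfrac{1}{2^{*}_{s}}K-A(u^{2}+v^{2})\Bigr]+\eta(r)\bigl[pQ+K\bigr]+2\bigl(1-\eta(r)\bigr)A(u^{2}+v^{2}). \]
Call the first summand $T_{1}$. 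The key observation is that $T_{1}\ge0$: it is supported in the annulus $\{a\le|(u,v)|\le5a\}$, where the very definition of $A$ forces $Q+\tfrac{1}{2^{*}_{s}}K\le A(u^{2}+v^{2})$, so the bracket is $\le0$, while $\eta'\le0$ because $\eta$ is non-increasing.

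With this in hand, \eqref{2.5A} follows by subtracting $2\hat Q$ from the displayed identity: the terms $2(1-\eta(r))A(u^{2}+v^{2})$ cancel and what remains is $T_{1}+\eta(r)\bigl[(p-2)Q+(1-\tfrac{2}{2^{*}_{s}})K\bigr]\ge0$, using $T_{1}\ge0$, $\eta\ge0$, $p>2$, $2^{*}_{s}>2$ and $Q,K\ge0$. For \eqref{2.6A} I would instead bound each summand above by a multiple of $u^{2}+v^{2}$ with a coefficient that is small for $a$ small: on the annulus $|\eta'(r)|\,r\le5C$ by \eqref{3} and $|Q+\tfrac{1}{2^{*}_{s}}K-A(u^{2}+v^{2})|\le A(u^{2}+v^{2})$ again by the definition of $A$, hence $T_{1}\le5CA(u^{2}+v^{2})$ on $\R^{2}$; the homogeneity of $Q$ and $K$ together with $\supp\eta\subset\{r\le5a\}$ give $\eta(r)[pQ+K]\le\omega(a)(u^{2}+v^{2})$ for some $\omega(a)\to0$ as $a\to0^{+}$ (here $p>2$ and $2^{*}_{s}>2$ are used); and the last summand is $\le2A(u^{2}+v^{2})$. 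Thus $u\hat Q_{u}+v\hat Q_{v}\le(5CA+\omega(a)+2A)(u^{2}+v^{2})$, and since $A\to0$ as $a\to0^{+}$ and, for $x\notin\Lambda$, $(H3)$ gives $V(x)u^{2}+W(x)v^{2}\ge\alpha(u^{2}+v^{2})$ with $\alpha=\min\{V(x_{0}),W(x_{0})\}>0$, choosing $a$ so small that $5CA+\omega(a)+2A\le\alpha/k$ yields \eqref{2.6A}.

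The only delicate point, and the main obstacle, is the transition term $T_{1}$ produced by differentiating the cutoff $\eta$: the same term must be shown to have the right sign (needed in \eqref{2.5A}) and the right order of magnitude as $a\to0$ (needed in \eqref{2.6A}), and both facts hinge on combining the monotonicity of $\eta$ with the extremal choice of the constant $A$ on the annulus $\{a\le|(u,v)|\le5a\}$. Everything else is bookkeeping with the homogeneity identity \eqref{2.1}.
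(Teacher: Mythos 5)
Your proof is correct and follows essentially the same route as the argument the paper defers to \cite{AFF2}: Euler's identity \eqref{2.1} on $\Lambda$, and on $\R^{N}\setminus\Lambda$ the explicit differentiation of $\hat Q$, with the sign and smallness of the cutoff term controlled by the monotonicity of $\eta$ together with the extremal definition of $A$ on the annulus $\{a\le|(u,v)|\le 5a\}$. The final reduction of \eqref{2.6A} to $(H3)$ via $V(x)u^{2}+W(x)v^{2}\ge\alpha(u^{2}+v^{2})$ is also the intended one.
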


\noindent
From now on, we will look for weak solutions of the following modified problem
\begin{equation}\label{P''}
\left\{
\begin{array}{ll}
(-\Delta)^{s}u+V(\e x)u=H_{u}(\e x, u, v)  &\mbox{ in } \R^{N}\\
(-\Delta)^{s}u+W(\e x) v=H_{v}(\e x, u, v) &\mbox{ in } \R^{N} \\
u, v>0 &\mbox{ in } \R^{N},
\end{array}
\right.
\end{equation}
which verify
$$
|(u(x), v(x))|\leq a \mbox{ for any } x\in \R^{N}\setminus \Lambda_{\e},
$$
where $\Lambda_{\e}=\{ x\in \R^{N}: \e x\in \Lambda\}$ and $|(u, v)|=\sqrt{u^{2}+v^{2}}$ for any $u, v\in \R$.\\
Indeed, from the definition of $H$ and $\hat{Q}$, one can see that every solution of \eqref{P''} with the above property is  a solution to \eqref{P'}.

\noindent
Now, we introduce the functional setting in which we study our problem.

For any $\e>0$, we define the fractional space
$$
\X_{\e}=\left\{(u, v)\in H^{s}(\R^{N})\times H^{s}(\R^{N}): \int_{\R^{N}} (V(\e x) |u|^{2}+W(\e x) |v|^{2}) dx<\infty\right\}.
$$
endowed with the norm
$$
\|(u, v)\|^{2}_{\e}=\int_{\R^{N}} |(-\Delta)^{\frac{s}{2}} u|^{2}+ |(-\Delta)^{\frac{s}{2}} v|^{2} dx+\int_{\R^{N}} (V(\e x) u^{2}+W(\e x) v^{2}) dx.
$$
In order to get solutions to \eqref{P''}, we seek critical points of the following Euler-Lagrange functional 
$$
\J_{\e}(u, v)=\frac{1}{2}\|(u, v)\|_{\e}^{2}-\int_{\R^{N}} H(\e x, u, v) dx
$$
for any $(u, v)\in \X_{\e}$.\\
Clearly, critical points of $\J_{\e}$ belong to the Nehari manifold 
$$
\N_{\e}=\{(u, v)\in \X_{\e}\setminus \{(0, 0)\}:  \langle \J'_{\e}(u, v),(u, v)\rangle=0\}.
$$
Standard calculations show that for any $(u, v)\in \X_{\e}\setminus (0, 0)$, the function $t\mapsto \J_{\e}(t u, tv)$ achieves its maximum at unique $t_{u}>0$ such that $t_{u}(u, v)\in \N_{\e}$.\\
We observe that $\J_{\e}\in C^{1}(\X_{\e}, \R)$ has a mountain pass geometry, that is
\begin{compactenum}[$(MP1)$]
\item $\J_{\e}(0, 0)=0$;
\item there exist $\alpha, \rho>0$ such that $\J_{\e}(u, v)\geq \alpha$ for $\|(u, v)\|_{\xi}\geq \alpha$; 
\item there exists $e\in \X_{0}\setminus B_{\rho}(0)$ such that $\J_{\e}(e)\leq 0$.
\end{compactenum}
Clearly $(MP1)$ holds.
By using \eqref{2.4A}-\eqref{2.6A}, we have
$$
\int_{\R^{N}} H(\e x, u, v) dx\leq \int_{\Lambda_{\e}} H(\e x, u, v) dx+\frac{1}{k} \int_{\R^{N}} V(\e x) u^{2}+W(\e x) v^{2} dx.
$$
Then, by using $(Q2)$ and Theorem \ref{Sembedding}, we get
\begin{align*}
\J_{\e}(u, v)&\geq \left(\frac{1}{2}-\frac{1}{k} \right) \|(u, v)\|^{2}_{\e}-C\int_{\R^{N}} |u|^{p}+|v|^{p} \,dx-C\int_{\R^{N}} |u|^{2^{*}_{s}}+|v|^{2^{*}_{s}} dx \\
&\geq \left(\frac{1}{2}-\frac{1}{k} \right) \|(u, v)\|^{2}_{\e}-C\|(u, v)\|^{p}_{\e}-C\|(u, v)\|^{2^{*}_{s}}_{\e}
\end{align*}
where $k>1$ is fixed. Hence, $(MP2)$ holds.
In order to verify $(MP3)$, we can note that for any $(\phi_{1}, \phi_{2})\in \X_{\e}$ such that $K(\phi_{1}, \phi_{2})\geq 0$ and $K(\phi_{1}, \phi_{2})\neq 0$, we have
\begin{align*}
&\J_{\e}(t\phi_{1}, t\phi_{2})\\
&\leq \frac{t^{2}}{2}\|(\phi_{1}, \phi_{2})\|^{2}_{\e}-Ct^{p} \int_{\Lambda_{\e}} Q(\phi_{1}, \phi_{2}) dx-\frac{t^{2^{*}_{s}}}{2^{*}_{s}}\int_{\Lambda_{\e}} K(\phi_{1}, \phi_{2})\, dx \rightarrow -\infty \mbox{ as } t\rightarrow \infty.
\end{align*}
Here we used the assumptions $(Q1)$ and $(K1)$.

Since we are interested in getting multiple critical points, we will work with the functional $\J_{\e}$ restricted to the Nehari manifold $\N_{\e}$. 
Our main purpose is to prove that the functional $\J_{\e}$ restricted to $\N_{\e}$ satisfies the Palais-Smale compactness condition at every level $c<\frac{s}{N}\widetilde{S}_{K}^{\frac{N}{2s}}$, where $\widetilde{S}_{K}$ is defined in Section $2$.
Firstly, we prove that such property holds for the unconstrained functional.\\
To do this, we recall the following variant of the Concentration-Compactness Lemma \cite{DPMV, PP}, whose proof is deferred to Appendix.
\begin{definition}
We say that a sequence $\{(u_{n}, v_{n})\}$ is tight in $\dot{H}^{s}(\R^{N})\times \dot{H}^{s}(\R^{N})$ if for every $\delta>0$ there exists $R>0$ such that $\int_{\R^{N}\setminus B_{R}} |(-\Delta)^{\frac{s}{2}}u_{n}|^{2}+ |(-\Delta)^{\frac{s}{2}}v_{n}|^{2} \, dx\leq \delta$ for any $n\in \mathbb{N}$.
\end{definition}
\begin{lemma}\label{CCL}
Let $\{(u_{n}, v_{n})\}$ be a bounded tight sequence in $\dot{H}^{s}(\R^{N})\times \dot{H}^{s}(\R^{N})$ such that $(u_{n}, v_{n})\rightharpoonup (u, v)$ in $\dot{H}^{s}(\R^{N})\times \dot{H}^{s}(\R^{N})$.
Let us assume that
\begin{align}\begin{split}\label{46FS}
&|(-\Delta)^{\frac{s}{2}}u_{n}|^{2}\rightharpoonup \mu \\
&|(-\Delta)^{\frac{s}{2}}v_{n}|^{2}\rightharpoonup \sigma \\
&K(u_{n}, v_{n})\rightharpoonup \nu 
\end{split}\end{align}
in the sense of measure, where $\mu$, $\sigma$ and $\nu$ are bounded non-negative measures on $\R^{N}$. Then, there exists at most a countable set $I$, a family of distinct points $\{x_{i}\}_{i\in I}\subset \R^{N}$ and $\{\mu_{i}\}_{i\in I}, \{\sigma_{i}\}_{i\in I}, \{\nu_{i}\}_{i\in I}\subset (0, \infty)$ such that
\begin{align}
&\nu=K(u, v)+\sum_{i\in I} \nu_{i} \delta_{x_{i}} \label{47FS}\\
&\mu\geq |(-\Delta)^{\frac{s}{2}}u|^{2}+\sum_{i\in I} \mu_{i} \delta_{x_{i}} \label{48FS}\\
&\sigma\geq |(-\Delta)^{\frac{s}{2}}v|^{2}+\sum_{i\in I} \sigma_{i} \delta_{x_{i}}. \label{49FS}
\end{align}
Moreover, the following inequality holds true 
\begin{align}\label{50FS}
\mu_{i}+\sigma_{i}\geq \widetilde{S}_{K} \nu_{i}^{\frac{2}{2^{*}_{s}}}.
\end{align}
\end{lemma}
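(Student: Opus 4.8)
The strategy is to reduce the vector-valued statement to the classical scalar Lions concentration-compactness principle applied to the single scalar quantity built out of $K$. First I would set $w_n := (u_n - u, v_n - v)$, so that $w_n \rightharpoonup (0,0)$ in $\dot H^s(\R^N)\times \dot H^s(\R^N)$, and record the Brezis--Lieb type splitting for $K$: since $K$ is continuous, $2^*_s$-homogeneous, and satisfies the growth bound $(K3)$, one has $\int_{\R^N} K(u_n,v_n)\,dx = \int_{\R^N} K(u,v)\,dx + \int_{\R^N} K(u_n-u,v_n-v)\,dx + o(1)$, and more precisely, testing against $\varphi\in C_0(\R^N)$, $\int \varphi\, K(u_n,v_n)\,dx - \int \varphi\, K(w_n)\,dx \to \int \varphi\, K(u,v)\,dx$ (the cross terms vanish by the compact embedding $H^s_{loc}\hookrightarrow L^q_{loc}$ for $q<2^*_s$ together with tightness at infinity). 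Hence, writing $\tilde\nu$ for the weak-$*$ limit of $K(w_n)\,\mathcal L^N$ (which exists up to a subsequence since the sequence is bounded in $L^1$), we get $\nu = K(u,v) + \tilde\nu$, and $\tilde\nu$ is a bounded non-negative measure carried by the "loss of compactness" of $w_n$.

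Next I would extract the atoms. The key inequality is the vectorial Sobolev inequality in the form coming from Lemma \ref{FSthm}: for every $\varphi\in C_0^\infty(\R^N)$,
\begin{equation*}
\widetilde S_K \left(\int_{\R^N} |\varphi|^{2^*_s} K(w_n)\, dx\right)^{2/2^*_s} \le \int_{\R^N} |(-\Delta)^{s/2}(\varphi u_n)|^2 + |(-\Delta)^{s/2}(\varphi v_n)|^2\, dx.
\end{equation*}
Wait — more carefully, one applies the definition of $\widetilde S_K$ to the pair $(\varphi w_n^1, \varphi w_n^2)$ and then uses the nonlocal product estimate $\int |(-\Delta)^{s/2}(\varphi f)|^2 \le \int |\varphi|^2 |(-\Delta)^{s/2} f|^2 + C\|f\|_{L^{2^*_s}}^2 \|\varphi\|_{?}$ with the remainder terms going to zero as $n\to\infty$ because $w_n\to 0$ in $L^2_{loc}$ (this is the standard fractional commutator/localization estimate). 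Passing to the limit gives $\widetilde S_K\, (\int |\varphi|^{2^*_s} d\tilde\nu)^{2/2^*_s} \le \int |\varphi|^2 d(\tilde\mu + \tilde\sigma)$, where $\tilde\mu,\tilde\sigma$ are the weak-$*$ limits of $|(-\Delta)^{s/2} w_n^1|^2$ and $|(-\Delta)^{s/2} w_n^2|^2$. This reverse-Hölder relation between $\tilde\nu$ and $\tilde\mu+\tilde\sigma$ forces, by the classical lemma of Lions (the same argument as in Lemma I.1 of Lions' paper), that $\tilde\nu = \sum_{i\in I}\nu_i\delta_{x_i}$ is purely atomic with at most countably many atoms, and that at each atom $\mu_i + \sigma_i \ge \widetilde S_K \nu_i^{2/2^*_s}$, where $\mu_i := \mu(\{x_i\})$, $\sigma_i := \sigma(\{x_i\})$. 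This is \eqref{50FS}. Finally, since $\mu \ge \tilde\mu$ and $\tilde\mu \ge |(-\Delta)^{s/2}u|^2$ by weak lower semicontinuity (and similarly for $\sigma$), and since $|(-\Delta)^{s/2}u|^2$ is non-atomic, adding the atomic parts of $\tilde\mu$ at the $x_i$ gives \eqref{48FS}--\eqref{49FS}; together with $\nu = K(u,v) + \tilde\nu$ this yields \eqref{47FS}.

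The main obstacle I anticipate is the nonlocal localization step: unlike the local case, $(-\Delta)^{s/2}(\varphi f)$ does not simplify to $\varphi (-\Delta)^{s/2} f$, so one must control the commutator $[\, (-\Delta)^{s/2},\varphi\,]f = (-\Delta)^{s/2}(\varphi f) - \varphi (-\Delta)^{s/2} f$ in $L^2$ and show its $L^2$-norm (against $w_n$) tends to $0$. This is exactly where tightness in $\dot H^s$ and the compactness of $H^s_{loc}\hookrightarrow L^2_{loc}$ enter; I would quote or reproduce the commutator estimate (as in \cite{PP, DPMV}) rather than reprove it. The positivity of the constants $\mu_i,\sigma_i,\nu_i$ is a byproduct: we only keep indices $i$ for which $\nu_i>0$, and then \eqref{50FS} forces $\mu_i+\sigma_i>0$; one may further discard any atom where, say, $\mu_i$ or $\sigma_i$ individually vanishes or simply allow them in $[0,\infty)$ — but to match the statement one restricts to the (countable) set where $\nu_i>0$ and notes $\mu_i+\sigma_i\ge \widetilde S_K\nu_i^{2/2^*_s}>0$. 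I would place all these computations in the Appendix as promised.
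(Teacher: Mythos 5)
Your proposal follows essentially the same route as the paper's Appendix proof: a Brezis--Lieb type splitting for $K$ (the paper's Lemma \ref{lem8BL}), the localized inequality coming from the definition of $\widetilde{S}_{K}$ applied to $(\varphi w_n^1,\varphi w_n^2)$ with the nonlocal commutator term killed by tightness and $L^{2}_{loc}$-convergence, Lions' lemma for the resulting reverse H\"older relation, and mutual singularity of the absolutely continuous and atomic parts to assemble \eqref{47FS}--\eqref{50FS}. One small slip: weak lower semicontinuity applied to $w_n^1\rightharpoonup 0$ gives only $\tilde{\mu}\geq 0$, not $\tilde{\mu}\geq |(-\Delta)^{\frac{s}{2}}u|^{2}$; the inequality you actually need, $\mu\geq |(-\Delta)^{\frac{s}{2}}u|^{2}$, follows from lower semicontinuity for $u_n\rightharpoonup u$ (equivalently from the identity $\mu=\tilde{\mu}+|(-\Delta)^{\frac{s}{2}}u|^{2}\,dx$, which is also what justifies your identification $\mu_i=\mu(\{x_i\})=\tilde{\mu}(\{x_i\})$ in the derivation of \eqref{50FS}).
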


\noindent
Now, we show that the following result holds.
\begin{lemma}\label{lemma3.2-1}
Let $\{(u_{n}, v_{n})\}$ be a sequence in $\X_{\e}$ such that 
$$
\J_{\e}(u_{n}, v_{n})\rightarrow c<\frac{s}{N} \widetilde{S}_{K}^{\frac{N}{2s}} \mbox{ and } \J_{\e}'(u_{n}, v_{n})\rightarrow 0.
$$
Then, $\{(u_{n}, v_{n})\}$ admits a convergent subsequence.
\end{lemma}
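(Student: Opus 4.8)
Fix $\theta\in(2,p)$, which is possible by $(Q1)$. Since $H\geq 0$, testing the Palais--Smale sequence against itself and using the inequalities \eqref{2.4A}--\eqref{2.6A} of Lemma \ref{lem2.2Alves} (with the constant $k$ in \eqref{2.6A} fixed $>1$) gives
$$
c+o_{n}(1)\big(1+\|(u_{n},v_{n})\|_{\e}\big)=\J_{\e}(u_{n},v_{n})-\tfrac{1}{\theta}\langle\J'_{\e}(u_{n},v_{n}),(u_{n},v_{n})\rangle\geq\Big(\tfrac{1}{2}-\tfrac{1}{\theta}\Big)\Big(1-\tfrac{1}{k}\Big)\|(u_{n},v_{n})\|_{\e}^{2},
$$
so $\{(u_{n},v_{n})\}$ is bounded in $\X_{\e}$, hence also in $H^{s}(\R^{N})\times H^{s}(\R^{N})$ because $V(\e x),W(\e x)\geq\alpha>0$. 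Passing to a subsequence, $(u_{n},v_{n})\rightharpoonup(u,v)$ in $\X_{\e}$, $u_{n}\to u$ and $v_{n}\to v$ in $L^{q}_{loc}(\R^{N})$ for $q\in[2,2^{*}_{s})$ and a.e.; letting $n\to\infty$ in $\langle\J'_{\e}(u_{n},v_{n}),(\phi,\psi)\rangle$ for $(\phi,\psi)\in C^{\infty}_{0}(\R^{N})\times C^{\infty}_{0}(\R^{N})$ and using $(Q2)$, $(K3)$ shows that $(u,v)$ is a critical point of $\J_{\e}$, so in particular $\langle\J'_{\e}(u,v),(u,v)\rangle=0$.

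\textbf{Step 2: tightness.} With a radial cut-off $\psi_{R}$ equal to $0$ on $B_{R}$, to $1$ outside $B_{2R}$, $|\nabla\psi_{R}|\leq C/R$, test $\J'_{\e}(u_{n},v_{n})$ against $(\psi_{R}u_{n},\psi_{R}v_{n})$. Since $\Lambda_{\e}$ is bounded, for $R$ large one has $\supp\psi_{R}\subset\R^{N}\setminus\Lambda_{\e}$, so \eqref{2.6A} applies there; controlling the nonlocal commutator term by the standard estimate (which is $O(1/R)$ uniformly in $n$), one obtains that for every $\delta>0$ there is $R_{\delta}>0$ with
$$
\limsup_{n\rightarrow\infty}\int_{\R^{N}\setminus B_{R_{\delta}}}\Big(|(-\Delta)^{\frac{s}{2}}u_{n}|^{2}+|(-\Delta)^{\frac{s}{2}}v_{n}|^{2}+V(\e x)u_{n}^{2}+W(\e x)v_{n}^{2}\Big)\,dx<\delta .
$$
In particular no mass escapes to infinity and $\{(u_{n},v_{n})\}$ is tight in $\dot{H}^{s}(\R^{N})\times\dot{H}^{s}(\R^{N})$.

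\textbf{Step 3: ruling out concentration.} By Step 2 we may apply Lemma \ref{CCL}: along a subsequence $|(-\Delta)^{\frac{s}{2}}u_{n}|^{2}\rightharpoonup\mu$, $|(-\Delta)^{\frac{s}{2}}v_{n}|^{2}\rightharpoonup\sigma$, $K(u_{n},v_{n})\rightharpoonup\nu$ in the sense of measures, with the decompositions \eqref{47FS}--\eqref{49FS} and the inequality \eqref{50FS}. I claim $I=\emptyset$. Fix $x_{i}$ and a cut-off $\phi_{\rho}$ with $\supp\phi_{\rho}\subset B_{2\rho}(x_{i})$, $\phi_{\rho}\equiv 1$ on $B_{\rho}(x_{i})$, $|\nabla\phi_{\rho}|\leq C/\rho$; insert $(\phi_{\rho}u_{n},\phi_{\rho}v_{n})$ into $\langle\J'_{\e}(u_{n},v_{n}),\cdot\rangle=o_{n}(1)$ and let $n\to\infty$, then $\rho\to0$. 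The local $L^{2}$-convergence kills the potential and the $Q$ terms, the usual estimate kills the nonlocal commutator term, and $\hat{Q}$ produces nothing critical since it grows at most quadratically outside $\{|(u,v)|\leq a\}$; one obtains $\mu_{i}+\sigma_{i}=0$ if $\e x_{i}\notin\Lambda$, whence $\nu_{i}=0$ by \eqref{50FS}, and $\mu_{i}+\sigma_{i}\leq\nu_{i}$ if $\e x_{i}\in\Lambda$, which with \eqref{50FS} forces $\nu_{i}\geq\widetilde{S}_{K}^{\frac{N}{2s}}$. In this last case, using $\J'_{\e}(u_{n},v_{n})\to0$, the identity $\J_{\e}(u_{n},v_{n})-\tfrac12\langle\J'_{\e}(u_{n},v_{n}),(u_{n},v_{n})\rangle=\int_{\R^{N}}[\tfrac12(u_{n}H_{u}+v_{n}H_{v})-H](\e x,u_{n},v_{n})\,dx$, the inequalities \eqref{2.4A}--\eqref{2.5A}, the relation $\frac{1}{2}-\frac{1}{2^{*}_{s}}=\frac{s}{N}$ and the lower semicontinuity of $\nu\mapsto\nu(\Lambda_{\e})$ on the open set $\Lambda_{\e}$,
$$
c=\lim_{n\rightarrow\infty}\Big[\J_{\e}(u_{n},v_{n})-\tfrac{1}{2}\langle\J'_{\e}(u_{n},v_{n}),(u_{n},v_{n})\rangle\Big]\geq\frac{s}{N}\liminf_{n\rightarrow\infty}\int_{\Lambda_{\e}}K(u_{n},v_{n})\,dx\geq\frac{s}{N}\,\nu_{i}\geq\frac{s}{N}\widetilde{S}_{K}^{\frac{N}{2s}},
$$
contradicting $c<\frac{s}{N}\widetilde{S}_{K}^{\frac{N}{2s}}$. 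Hence $I=\emptyset$, i.e. $\nu=K(u,v)$, so $K(u_{n},v_{n})\to K(u,v)$ in $L^{1}_{loc}(\R^{N})$, and by the tightness of Step 2 also in $L^{1}(\R^{N})$.

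\textbf{Step 4: strong convergence and the main difficulty.} Since $I=\emptyset$, the critical part of $u_{n}H_{u}+v_{n}H_{v}$ passes to the limit in $L^{1}(\R^{N})$; the subcritical $Q$-part and the $\hat{Q}$-part do so thanks to $(Q2)$, the compact local embeddings of Theorem \ref{Sembedding} and the tightness of Step 2. Therefore $\int_{\R^{N}}(u_{n}H_{u}+v_{n}H_{v})(\e x,u_{n},v_{n})\,dx\to\int_{\R^{N}}(uH_{u}+vH_{v})(\e x,u,v)\,dx$. Combining $\langle\J'_{\e}(u_{n},v_{n}),(u_{n},v_{n})\rangle\to0$ with $\langle\J'_{\e}(u,v),(u,v)\rangle=0$ we get $\|(u_{n},v_{n})\|_{\e}\to\|(u,v)\|_{\e}$, which together with $(u_{n},v_{n})\rightharpoonup(u,v)$ in the Hilbert space $\X_{\e}$ yields $(u_{n},v_{n})\to(u,v)$ in $\X_{\e}$. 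The genuinely delicate point is Step 3: one must combine the reverse Sobolev inequality \eqref{50FS} provided by Lemma \ref{CCL} with a localized Nehari-type test, keeping careful track of the penalization region $\Lambda_{\e}$, so as to charge every possible atom with an energy at least $\frac{s}{N}\widetilde{S}_{K}^{N/2s}$, which the strict level bound forbids; establishing the $\dot{H}^{s}$-tightness of Step 2, indispensable for applying Lemma \ref{CCL}, is the other technical nuisance caused by the nonlocality of $(-\Delta)^{s}$.
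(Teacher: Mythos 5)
Your proof is correct and follows essentially the same route as the paper: the penalization inequalities \eqref{2.4A}--\eqref{2.6A} give boundedness and the tail estimate, Lemma \ref{CCL} combined with a localized Nehari-type test and the level restriction $c<\frac{s}{N}\widetilde{S}_{K}^{\frac{N}{2s}}$ rules out concentration, and matching $\langle\J'_{\e}(u_{n},v_{n}),(u_{n},v_{n})\rangle\rightarrow 0$ with $\langle\J'_{\e}(u,v),(u,v)\rangle=0$ yields norm convergence. The only (harmless) deviation is that you exclude all atoms, including those outside $\Lambda_{\e}$, whereas the paper only excludes those lying in $\Lambda_{\e}$ and treats the exterior region directly via \eqref{2.6A} and the tail estimate.
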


\begin{proof} 
We begin proving that $\{(u_{n}, v_{n})\}$ is bounded in $\X_{\e}$. 
From the conditions \eqref{2.4A} and \eqref{2.5A}, it follows that
\begin{align}\label{3.1-1}
\int_{\R^{N}} &|(-\Delta)^{\frac{s}{2}} u_{n}|^{2}+ |(-\Delta)^{\frac{s}{2}} v_{n}|^{2} + V(\e x)u_{n}^{2} + W(\e x)v_{n}^{2} \, dx \nonumber \\
&\geq \int_{\Lambda_{\e}} u_{n} H_{u}(\e x, u_{n}, v_{n})+ v_{n} H_{v}(\e x, u_{n}, v_{n})\, dx + o(\| (u_{n}, v_{n})\|_{\e}) \nonumber\\
&\geq \int_{\Lambda_{\e}} pH(\e x, u_{n}, v_{n})\, dx+o(\| (u_{n}, v_{n})\|_{\e}). 
\end{align} 
On the other hand
\begin{align*}
\frac{1}{2} \int_{\R^{N}} &|(-\Delta)^{\frac{s}{2}} u_{n}|^{2}+ |(-\Delta)^{\frac{s}{2}} v_{n}|^{2} + V(\e x)u_{n}^{2} + W(\e x)v_{n}^{2} \, dx \\
& =\int_{\R^{N}} H(\e x, u_{n}, v_{n}) \, dx + O(1), 
\end{align*}
so, by \eqref{2.5A} and \eqref{2.6A}, we get 
\begin{align}\begin{split}\label{3.2-1}
\frac{1}{2} \int_{\R^{N}} &|(-\Delta)^{\frac{s}{2}} u_{n}|^{2}+ |(-\Delta)^{\frac{s}{2}} v_{n}|^{2} + V(\e x)u_{n}^{2} + W(\e x)v_{n}^{2} \, dx\\
&\leq \int_{\Lambda_{\e}} H(\e x, u_{n}, v_{n}) \, dx + \frac{1}{2k} \int_{\R^{N}\setminus \Lambda_{\e}} [V(\e x) u_{n}^{2} + W(\e x) v_{n}^{2}] + O(1). 
\end{split}\end{align}
Putting together \eqref{3.1-1} and \eqref{3.2-1} we obtain
\begin{align*}
\left( \frac{1}{2}- \frac{1}{p}\right) &\int_{\R^{N}} |(-\Delta)^{\frac{s}{2}} u_{n}|^{2}+ |(-\Delta)^{\frac{s}{2}} v_{n}|^{2} + V(\e x)u_{n}^{2} + W(\e x)v_{n}^{2} \, dx \\
&\leq \frac{1}{2k} \int_{\R^{N}\setminus \Lambda_{\e}} [V(\e x) u_{n}^{2} + W(\e x) v_{n}^{2}] + o(\|(u_{n}, v_{n})\|_{\e})+ O(1).
\end{align*}
Taking $k>0$ such that $k>[2(\frac{1}{2}- \frac{1}{p})]^{-1}$, we can deduce that $\{(u_{n}, v_{n})\}$ is bounded. \\
Since $\X_{\e}$ is reflexive, there exists $(u, v)\in \X_{\e}$ and a subsequence, still denoted by $\{(u_{n}, v_{n})\}$, such that $\{(u_{n}, v_{n})\}$ is weakly convergent to $(u, v)$ and strongly in $L^{q}_{loc}(\R^{N})$ for any $q\in [2, 2^{*}_{s})$. \\
Thus, by using $\langle \J'_{\e}(u_{n}, v_{n}), (u_{n}, v_{n})\rangle=o_{n}(1)$, we can see that
\begin{align}\label{ionew}
\|(u_{n}, v_{n})\|^{2}_{\e}=\int_{\R^{N}} u_{n} H_{u}(\e x, u_{n},v_{n})+ v_{n}H_{v}(\e x, u_{n}, v_{n})\, dx+o_{n}(1).
\end{align}
On the other hand, standard calculations show that $(u, v)$ is a critical point of $\J_{\e}$ and it holds
\begin{align}\label{io}
\|(u, v)\|^{2}_{\e}=\int_{\R^{N}} u H_{u}(\e x, u, v)+ v H_{v}(\e x, u, v)\, dx.
\end{align}
Now, we aim to show that $\{(u_{n}, v_{n})\}$ converges strongly to $(u, v)$ in $\X_{\e}$. \\
In order to achieve our purpose, it is enough to show that $\|(u_{n}, v_{n})\|_{\e}\rightarrow \|(u, v)\|_{\e}$, that in view of \eqref{ionew} and \eqref{io}, means to prove that
\begin{equation}\label{ciuccio}
\int_{\R^{N}} u_{n} H_{u}(\e x, u_{n},v_{n})+ v_{n}H_{v}(\e x, u_{n}, v_{n})\, dx\rightarrow \int_{\R^{N}} u H_{u}(\e x, u, v)+ v H_{v}(\e x, u, v)\, dx.
\end{equation}
We begin proving that for each $\delta>0$, there exists $R>0$ such that
\begin{equation}\label{3.3-1}
\limsup_{n\rightarrow \infty} \int_{\R^{N} \setminus B_{R}} |(-\Delta)^{\frac{s}{2}} u_{n}|^{2}+ |(-\Delta)^{\frac{s}{2}} v_{n}|^{2} + V(\e x)u_{n}^{2} + W(\e x)v_{n}^{2} \, dx \leq \delta. 
\end{equation} 
Let $\eta_{R}$ be a cut-off function such that $\eta_{R}=0$ on $B_{R}$, $\eta_{R}=1$ on $\R^{N} \setminus B_{2R}$, $0\leq \eta\leq 1$ and $|\nabla \eta_{R}|\leq \frac{c}{R}$. 
Suppose that $R$ is chosen so that $\Lambda_{\e} \subset B_{R}$.
Since $\{(u_{n}, v_{n})\}$ is a bounded (PS) sequence, we have
\begin{equation*}
\langle \J_{\e}'(u_{n}, v_{n}), (\eta_{R} u_{n}, \eta_{R} v_{n})  \rangle = o_{n}(1). 
\end{equation*}
Hence, by using \eqref{2.6A} with $k>1$, we get
\begin{align*}
&\iint_{\R^{2N}} \eta_{R}(x) \left[\frac{|u_{n}(x)-u_{n}(y)|^{2}}{|x-y|^{N+2s}}+\frac{|v_{n}(x)-v_{n}(y)|^{2}}{|x-y|^{N+2s}}\right]  dx dy\\
&+\iint_{\R^{2N}} \frac{(\eta_{R}(x)-\eta_{R}(y))(u_{n}(x)-u_{n}(y))}{|x-y|^{N+2s}} u_{n}(y) dx dy \\
&+ \int_{\R^{N}} (V(\e x)u_{n}^{2} + W(\e x) v_{n}^{2})\eta_{R} \, dx \\
&=\int_{\R^{N}} (u_{n}H_{u}(\e x, u_{n}, v_{n}) +v_{n}H_{v}(\e x, u_{n}, v_{n}))\eta_{R} + o_{n}(1)\\
&\leq \frac{1}{k} \int_{\R^{N}\setminus \Lambda_{\e}} V(\e x)u_{n}^{2} + W(\e x) v_{n}^{2} \, dx + o_{n}(1), 
\end{align*}
which implies that
\begin{align}\label{jt}
&\left(1-\frac{1}{k}\right)\int_{\R^{N}\setminus B_{R}} |(-\Delta)^{\frac{s}{2}}u_{n}|^{2} + |(-\Delta)^{\frac{s}{2}} v_{n}|^{2}+ V(\e x)u_{n}^{2} + W(\e x) v_{n}^{2} \, dx \nonumber \\
&\leq - \iint_{\R^{2N}} \frac{(\eta_{R}(x)-\eta_{R}(y))(u_{n}(x)-u_{n}(y))}{|x-y|^{N+2s}} u_{n}(y) dx dy+ o_{n}(1).
\end{align}
By using H\"older inequality and the boundedness of $\{(u_{n}, v_{n})\}$, we can see that
\begin{align*}
&\left| \iint_{\R^{2N}} \frac{(\eta_{R}(x)-\eta_{R}(y))(u_{n}(x)-u_{n}(y))}{|x-y|^{N+2s}} u_{n}(y) dx dy\right| \\
&\leq \left( \iint_{\R^{2N}}\frac{|\eta_{R}(x)-\eta_{R}(y)|^{2}}{|x-y|^{N+2s}} u^{2}_{n}(y) dx dy\right)^{\frac{1}{2}} \left( \iint_{\R^{2N}}\frac{|u_{n}(x)-u_{n}(y)|^{2}}{|x-y|^{N+2s}} dx dy\right)^{\frac{1}{2}} \\
&\leq C  \left( \iint_{\R^{2N}}\frac{|\eta_{R}(x)-\eta_{R}(y)|^{2}}{|x-y|^{N+2s}} u^{2}_{n}(y) dx dy\right)^{\frac{1}{2}}.
\end{align*} 
Then, we can argue as in \cite{A5} (see formula $3.13$ there) or Lemma $3.4$ in \cite{AI}, to deduce that
\begin{align}\label{JT}
\lim_{R\rightarrow \infty} \limsup_{n\rightarrow \infty} \iint_{\R^{2N}}\frac{|\eta_{R}(x)-\eta_{R}(y)|^{2}}{|x-y|^{N+2s}} u^{2}_{n}(x) dx dy=0.
\end{align}
Putting together \eqref{jt} and \eqref{JT}, we can deduce that  \eqref{3.3-1} holds.\\
Now, by using \eqref{3.3-1} and \eqref{2.6A} of Lemma \ref{lem2.2Alves}, we can see that 
\begin{equation}\label{3.6-1}
\int_{\R^{N} \setminus B_{R}} u_{n} H_{u}(\e x, u_{n},v_{n})+ v_{n}H_{v}(\e x, u_{n}, v_{n})\, dx \leq \frac{\delta}{4}, 
\end{equation}
for any $n$ big enough.
On the other hand, choosing $R$ large enough, we may assume that
\begin{equation}\label{3.6-2}
\int_{\R^{N} \setminus B_{R}} u H_{u}(\e x, u, v)+ v H_{v}(\e x, u, v)\, dx \leq \frac{\delta}{4}. 
\end{equation}
From the arbitrariness of $\delta>0$, we can see that \eqref{3.6-1} and \eqref{3.6-2} yield
\begin{align}\label{ioo} 
\int_{\R^{N} \setminus B_{R}} &u_{n} H_{u}(\e x, u_{n},v_{n})+ v_{n}H_{v}(\e x, u_{n}, v_{n})\, dx \nonumber\\
&\rightarrow \int_{\R^{N} \setminus B_{R}} u H_{u}(\e x, u, v)+ vH_{v}(\e x, u, v)\, dx
\end{align}
as $n\rightarrow \infty$.
Since $B_{R}\cap (\R^{N}\setminus \Lambda_{\e})$ is bounded, we can use \eqref{2.6A} of Lemma \ref{lem2.2Alves}, the Dominated Convergence Theorem and the strong convergence in $L^{q}_{loc}(\R^{N})$ to see that 
\begin{align}\label{iooo}
\int_{B_{R}\cap (\R^{N}\setminus \Lambda_{\e})} &u_{n} H_{u}(\e x, u_{n},v_{n})+ v_{n}H_{v}(\e x, u_{n}, v_{n})\, dx \nonumber\\
&\rightarrow \int_{B_{R}\cap (\R^{N}\setminus \Lambda_{\e})} u H_{u}(\e x, u, v)+ v H_{v}(\e x, u, v)\, dx
\end{align}
as $n\rightarrow \infty$.\\
At this point, we show that
\begin{equation}\label{ioooo}
\lim_{n\rightarrow \infty}\int_{\Lambda_{\e}} K(u_{n}, v_{n}) \,dx=\int_{\Lambda_{\e}} K(u, v) \,dx.
\end{equation}
Indeed, if we assume that \eqref{ioooo} is true, from Theorem \ref{Sembedding}, $(Q2)$ and $(K3)$, and the Dominated Convergence Theorem, we can see that
\begin{equation}\label{i5}
\int_{B_{R}\cap \Lambda_{\e}} u_{n} H_{u}(\e x, u_{n},v_{n})+ v_{n} H_{v}(\e x, u_{n}, v_{n})\, dx\rightarrow \int_{B_{R}\cap \Lambda_{\e}} u H_{u}(\e x, u, v)+ v H_{v}(\e x, u, v)\, dx.
\end{equation}
Putting together \eqref{ioo},\eqref{iooo} and \eqref{i5}, we can conclude that \eqref{ciuccio} holds. \\
It remains to prove that \eqref{ioooo} is satisfied. 
By Lemma \ref{CCL}, we can find an at most countable index set $I$, sequences $\{x_{i}\}\subset \R^{N}$, $\{\mu_{i}\}, \{\sigma_{i}\}, \{\nu_{i}\}\subset (0, \infty)$ such that 
\begin{align}\label{CML}
&\mu\geq |(-\Delta)^{\frac{s}{2}}u|^{2}+\sum_{i\in I} \mu_{i} \delta_{x_{i}}, \quad \sigma\geq |(-\Delta)^{\frac{s}{2}}v|^{2}+\sum_{i\in I} \sigma_{i} \delta_{x_{i}} \nonumber \\
&\nu=K(u, v)+\sum_{i\in I} \nu_{i} \delta_{x_{i}} \quad \mbox{ and } \widetilde{S}_{K} \nu_{i}^{\frac{2}{2^{*}_{s}}}\leq \mu_{i}+\sigma_{i}
\end{align}
for any $i\in I$, where $\delta_{x_{i}}$ is the Dirac mass at the point $x_{i}$.
Let us show that $\{x_{i}\}_{i\in I}\cap \Lambda_{\e}=\emptyset$. Assume by contradiction that 
$x_{i}\in \Lambda_{\e}$ for some $i\in I$. For any $\rho>0$, we define $\psi_{\rho}(x)=\psi(\frac{x-x_{i}}{\rho})$ where $\psi\in C^{\infty}_{0}(\R^{N}, [0, 1])$ is such that $\psi=1$ in $B_{1}$, $\psi=0$ in $\R^{N}\setminus B_{2}$ and $\|\nabla \psi\|_{L^{\infty}(\R^{N})}\leq 2$. We suppose that  $\rho>0$ is such that $supp(\psi_{\rho})\subset \Lambda_{\e}$. Since $\{(\psi_{\rho} u_{n}, \psi_{\rho} v_{n})\}$ is bounded, we can see that $\langle \J'_{\e}(u_{n}, v_{n}),(\psi_{\rho} u_{n}, \psi_{\rho} v_{n})\rangle=o_{n}(1)$, so, by using \eqref{2.1}, we get
\begin{align}\label{amici}
\iint_{\R^{2N}} &\psi_{\rho}(y)\frac{|u_{n}(x)-u_{n}(y)|^{2}}{|x-y|^{N+2s}}+\psi_{\rho}(y)\frac{|v_{n}(x)-v_{n}(y)|^{2}}{|x-y|^{N+2s}} \, dx dy \nonumber\\
&\leq -\iint_{\R^{2N}} u_{n}(x) \frac{(u_{n}(x)-u_{n}(y)(\psi_{\rho}(x)-\psi_{\rho}(y)))}{|x-y|^{N+2s}} \nonumber\\
&\quad +v_{n}(x) \frac{(v_{n}(x)-v_{n}(y)(\psi_{\rho}(x)-\psi_{\rho}(y)))}{|x-y|^{N+2s}} \, dx dy \nonumber\\
&\quad +\int_{\R^{N}} u_{n}\psi_{\rho} Q_{u}(u_{n}, v_{n})+v_{n}\psi_{\rho} Q_{v}(u_{n}, v_{n})\, dx\nonumber \\
&\quad+\int_{\R^{N}} \psi_{\rho} K(u_{n}, v_{n})\, dx+o_{n}(1).
\end{align}
Due to the fact that $Q$ has subcritical growth and $\psi_{\rho}$ has compact support, we can see that
\begin{align}\label{Alessia1}
\lim_{\rho\rightarrow 0} &\lim_{n\rightarrow \infty} \int_{\R^{N}} u_{n}\psi_{\rho} Q_{u}(u_{n}, v_{n})+v_{n}\psi_{\rho} Q_{u}(u_{n}, v_{n})\, dx\nonumber \\
&=\lim_{\rho\rightarrow 0} \int_{\R^{N}} u\psi_{\rho} Q_{u}(u, v)+v\psi_{\rho} Q_{v}(u, v)\, dx=0.
\end{align}
Now, we show that
\begin{equation}\label{nio}
\lim_{\rho\rightarrow 0}\lim_{n\rightarrow \infty} \iint_{\R^{2N}} u_{n}(x) \frac{(u_{n}(x)-u_{n}(y)(\psi_{\rho}(x)-\psi_{\rho}(y)))}{|x-y|^{N+2s}}\, dx dy=0.
\end{equation}
By using H\"older inequality and the fact that $\{(u_{n}, v_{n})\}$ is bounded in $\X_{\e}$, we can see that
\begin{align*}
&\left| \iint_{\R^{2N}} u_{n}(x) \frac{(u_{n}(x)-u_{n}(y)(\psi_{\rho}(x)-\psi_{\rho}(y)))}{|x-y|^{N+2s}} \, dx dy \right|  \\
&\leq \left(\iint_{\R^{2N}} \frac{|u_{n}(x)-u_{n}(y)|^{2}}{|x-y|^{N+2s}} \, dx dy  \right)^{\frac{1}{2}} \left(\iint_{\R^{2N}} u^{2}_{n}(x) \frac{|\psi_{\rho}(x)-\psi_{\rho}(y)|^{2}}{|x-y|^{N+2s}} \, dx dy \right)^{\frac{1}{2}}\\
&\leq C\left(\iint_{\R^{2N}} u^{2}_{n}(x) \frac{|\psi_{\rho}(x)-\psi_{\rho}(y)|^{2}}{|x-y|^{N+2s}} \, dx dy \right)^{\frac{1}{2}}.
\end{align*}
Therefore, if we prove that
\begin{equation}\label{NIOO}
\lim_{\rho\rightarrow 0}\lim_{n\rightarrow \infty}  \iint_{\R^{2N}} u^{2}_{n}(x) \frac{|\psi_{\rho}(x)-\psi_{\rho}(y)|^{2}}{|x-y|^{N+2s}} \, dx dy =0,
\end{equation}
then  \eqref{nio} is satisfied. \\
Let us note that $\R^{2N}$ can be written as 
\begin{align*}
\R^{2N}&=((\R^{N}-B_{2\rho}(x_{i}))\times (\R^{N}-B_{2\rho}(x_{i})))\cup (B_{2\rho}(x_{i})\times \R^{N}) \cup ((\R^{N}-B_{2\rho}(x_{i}))\times B_{2\rho}(x_{i}))\\
&=: X^{1}_{\rho}\cup X^{2}_{\rho} \cup X^{3}_{\rho}.
\end{align*}
Then
\begin{align}\label{Pa1}
&\iint_{\R^{2N}} u_{n}^{2}(x) \frac{(\psi_{\rho}(x)-\psi_{\rho}(y))^{2}}{|x-y|^{N+2s}} \, dx dy \nonumber\\
&=\iint_{X^{1}_{\rho}} u_{n}^{2}(x)\frac{|\psi_{\rho}(x)-\psi_{\rho}(y)|^{2}}{|x-y|^{N+2s}} \, dx dy +\iint_{X^{2}_{\rho}} u_{n}^{2}(x)\frac{|\psi_{\rho}(x)-\psi_{\rho}(y)|^{2}}{|x-y|^{N+2s}} \, dx dy \nonumber\\
&+ \iint_{X^{3}_{\rho}} u_{n}^{2}(x)\frac{|\psi_{\rho}(x)-\psi_{\rho}(y)|^{2}}{|x-y|^{N+2s}} \, dx dy.
\end{align}
In what follows, we estimate each integral in (\ref{Pa1}).
Since $\psi=0$ in $\R^{N}\setminus B_{2}$, we have
\begin{align}\label{Pa2}
\iint_{X^{1}_{\rho}} u_{n}^{2}(x)\frac{|\psi_{\rho}(x)-\psi_{\rho}(y)|^{2}}{|x-y|^{N+2s}} \, dx dy=0.
\end{align}
Since $0\leq \psi\leq 1$, we can see that
\begin{align}\label{Pa3}
&\iint_{X^{2}_{\rho}} u_{n}^{2}(x)\frac{|\psi_{\rho}(x)-\psi_{\rho}(y)|^{2}}{|x-y|^{N+2s}} \, dx dy \nonumber\\
&=\int_{B_{2\rho}(x_{i})} \,dx \int_{\{y\in \R^{N}: |x-y|\leq \rho\}} u_{n}^{2}(x)\frac{|\psi_{\rho}(x)-\psi_{\rho}(y)|^{2}}{|x-y|^{N+2s}} \, dy \nonumber \\
&+\int_{B_{2\rho}(x_{i})} \, dx \int_{\{y\in \R^{N}: |x-y|> \rho\}} u_{n}^{2}(x)\frac{|\psi_{\rho}(x)-\psi_{\rho}(y)|^{2}}{|x-y|^{N+2s}} \, dy  \nonumber\\
&\leq \rho^{-2} \|\nabla \psi\|_{L^{\infty}(\R^{N})}^{2} \int_{B_{2\rho}(x_{i})} \, dx \int_{\{y\in \R^{N}: |x-y|\leq \rho\}} \frac{u^{2}_{n}(x)}{|x-y|^{N+2s-2}} \, dy\nonumber \\
&+ 4 \int_{B_{2\rho}(x_{i})} \, dx \int_{\{y\in \R^{N}: |x-y|> \rho\}} \frac{u_{n}^{2}(x)}{|x-y|^{N+2s}} \, dy \nonumber\\
&\leq c_{1} \rho^{-2s} \int_{B_{2\rho}(x_{i})} u_{n}^{2}(x) \, dx+c_{2} \rho^{-2s} \int_{B_{2\rho}(x_{i})} u_{n}^{2}(x) \, dx \nonumber \\
&=c_{3} \rho^{-2s} \int_{B_{2\rho}(x_{i})} u_{n}^{2}(x) \, dx,
\end{align}
for some $c_{1}, c_{2}, c_{3}>0$ independent of $\rho$ and $n$.
On the other hand
\begin{align}\label{Pa4}
&\iint_{X^{3}_{\rho}} u_{n}^{2}(x)\frac{|\psi_{\rho}(x)-\psi_{\rho}(y)|^{2}}{|x-y|^{N+2s}} \, dx dy \nonumber\\
&=\int_{\R^{N}\setminus B_{2\rho}(x_{i})} \, dx \int_{\{y\in B_{2\rho}(x_{i}): |x-y|\leq \rho\}} u_{n}^{2}(x)\frac{|\psi_{\rho}(x)-\psi_{\rho}(y)|^{2}}{|x-y|^{N+2s}} \, dy \nonumber\\
&+\int_{\R^{N}\setminus B_{2\rho}(x_{i})} \,dx \int_{\{y\in B_{2\rho}(x_{i}): |x-y|> \rho\}} u_{n}^{2}(x)\frac{|\psi_{\rho}(x)-\psi_{\rho}(y)|^{2}}{|x-y|^{N+2s}} \, dy=: A_{\rho, n}+ B_{\rho, n}. 
\end{align}
Now, we note that $|x-y|<\rho$ and $|y-x_{i}|<2\rho$ imply $|x-x_{i}|<3\rho$, so we get
\begin{align}\label{Pa5}
A_{\rho, n}&\leq \rho^{-2} \|\nabla \psi\|_{L^{\infty}(\R^{N})}^{2} \int_{B_{3\rho}(x_{i})} \, dx \int_{\{y\in B_{2\rho}(x_{i}): |x-y|\leq \rho\}} \frac{u_{n}^{2}(x)}{|x-y|^{N+2s-2}} \, dy \nonumber\\
&\leq \rho^{-2} \|\nabla \psi\|_{L^{\infty}(\R^{N})}^{2} \omega_{N-1} \int_{B_{3\rho}(x_{i})} u_{n}^{2}(x) \, dx \int_{0}^{\rho} \frac{1}{r^{2s-1}} \, dr \nonumber\\
&=c_{4} \rho^{-2s} \int_{B_{3\rho}(x_{i})} u_{n}^{2}(x) \, dx,
\end{align}
for some $c_{4}>0$ independent of $\rho$ and $n$. 
Here $\omega_{N-1}$ is the Lebesgue measure of the unit sphere in $\R^{N}$.
Let us observe, that for all $K>4$ it holds 
$$
(\R^{N}\setminus B_{2\rho}(x_{i}))\times B_{2\rho}(x_{i}) \subset (B_{K\rho}(x_{i})\times B_{2\rho}(x_{i})) \cup ((\R^{N}\setminus B_{K\rho}(x_{i}))\times B_{2\rho}(x_{i})).
$$
Then, we have the following estimates
\begin{align}\label{Pa6}
&\int_{B_{K\rho}(x_{i})} \, dx \int_{\{y\in B_{2\rho}(x_{i}): |x-y|> \rho\}} u_{n}^{2}(x)\frac{|\psi_{\rho}(x)-\psi_{\rho}(y)|^{2}}{|x-y|^{N+2s}} \, dy \nonumber\\
&\leq 4\int_{B_{K\rho}(x_{i})} \, dx \int_{\{y\in B_{2\rho}(x_{i}): |x-y|> \rho\}} u_{n}^{2}(x)\frac{1}{|x-y|^{N+2s}} \, dy \nonumber \\
&\leq 4 \int_{B_{K\rho}(x_{i})} \, dx \int_{\{z\in \R^{N}: |z|> \rho\}} u_{n}^{2}(x)\frac{1}{|z|^{N+2s}} \, dz \nonumber\\
&= c_{5} \rho^{-2s} \int_{B_{K\rho}(x_{i})} u_{n}^{2}(x) \, dx,
\end{align}
for some $c_{5}$ independent of $\rho$ and $n$.
On the other hand, $|x-x_{i}|\geq K\rho$ and $|y-x_{i}|<2\rho$ imply 
$$
|x-y|\geq |x-x_{i}|-|y-x_{i}|\geq \frac{|x-x_{i}|}{2}+\frac{K\rho}{2}-2\rho>\frac{|x-x_{i}|}{2},
$$
so we can see that
\begin{align}\label{Pa7}
&\int_{\R^{N}\setminus B_{K\rho}(x_{i})} \, dx \int_{\{y\in B_{2\rho}(x_{i}): |x-y|>\rho\}} u_{n}^{2}(x)\frac{|\psi_{\rho}(x)-\psi_{\rho}(y)|^{2}}{|x-y|^{N+2s}} \, dy \nonumber\\
&\leq  2^{N+2s} 4 \int_{\R^{N}\setminus B_{K\rho}(x_{i})} \, dx \int_{\{y\in B_{2\rho}(x_{i}): |x-y|>\rho\}} \frac{u_{n}^{2}(x)}{|x-x_{i}|^{N+2s}} \, dy \nonumber\\
&\leq 2^{N+2s} 4 \left(2\rho\right)^{N} \int_{\R^{N}\setminus B_{K\rho}(x_{i})} \frac{u_{n}^{2}(x)}{|x-x_{i}|^{N+2s}} \, dx \nonumber\\
&\leq 2^{2(N+s+1)}\! \rho^{N}\!\left(\int_{\R^{N}\setminus B_{K\rho}(x_{i})} \!\!\!u_{n}^{2^{*}_{s}}(x) \, dx\right)^{\frac{2}{2^{*}_{s}}}\!\! \left(\int_{\R^{N}\setminus B_{K\rho}(x_{i})} \!\!\!|x-x_{i}|^{-(N+2s)\frac{2^{*}_{s}}{2^{*}_{s}-2}} \, dx\right)^{\frac{2^{*}_{s}-2}{2^{*}_{s}}} \nonumber\\
&\leq c_{6} K^{-N} \left(\int_{\R^{N}\setminus B_{K\rho}(x_{i})} u_{n}^{2^{*}_{s}}(x) \, dx\right)^{\frac{2}{2^{*}_{s}}},
\end{align}
for some $c_{6}>0$ independent of $\rho$ and $n$.
Taking into account (\ref{Pa6}) and (\ref{Pa7}), and  the fact that $\{u_{n}\}$ is bounded in $L^{2^{*}_{s}}(\R^{N})$, we can find $c_{7}>0$  independent of $\rho$ and $n$ such that 
\begin{align}\label{Pa8}
B_{\rho, n}\leq c_{5} \rho^{-2s} \int_{B_{K\rho}(x_{i})} u_{n}^{2}(x) \, dx+c_{7} K^{-N}.
\end{align}
Putting together (\ref{Pa1})-(\ref{Pa5}) and (\ref{Pa8}), we have
\begin{align}\label{stima}
\iint_{\R^{2N}} u_{n}^{2}(x)\frac{|\psi_{\rho}(x)-\psi_{\rho}(y)|^{2}}{|x-y|^{N+2s}} \, dx dy \leq c_{8} \rho^{-2s} \int_{B_{K\rho}(x_{i})} u_{n}^{2}(x) \, dx+c_{9} K^{-N},
\end{align}
for some $c_{8}, c_{9}>0$ independent of $\rho$ and $n$.
Since $u_{n}\rightarrow u$ strongly in $L^{2}_{loc}(\R^{N})$, we can deduce that
\begin{align*}
\lim_{n\rightarrow \infty}  c_{8} \rho^{-2s} \int_{B_{K\rho}(x_{i})} u_{n}^{2}(x) \, dx+c_{9} K^{-N} =c_{8} \rho^{-2s} \int_{B_{K\rho}(x_{i})} u^{2}(x) \, dx+c_{9}K^{-N}.
\end{align*}
Moreover, by using H\"older inequality, we get
\begin{align*}
c_{8} \rho^{-2s} & \int_{B_{K\rho}(x_{i})} u^{2}(x) \, dx+c_{9}K^{-N} \\
&\leq c_{8} \rho^{-2s} \left(\int_{B_{K\rho}(x_{i})} u^{2}(x) \, dx\right)^{\frac{2}{2^{*}_{s}}} |B_{K\rho}(x_{i})|^{1-\frac{2}{2^{*}_{s}}}+c_{9} K^{-N} \\
&\leq c_{10} K^{2s}  \left(\int_{B_{K\rho}(x_{i})} u^{2}(x) \, dx\right)^{\frac{2}{2^{*}_{s}}}+c_{9}K^{-N}\rightarrow c_{9}K^{-N} \mbox{ as } \rho\rightarrow 0.
\end{align*}
As a consequence 
\begin{align*}
&\lim_{\rho\rightarrow 0} \limsup_{n\rightarrow \infty} \iint_{\R^{2N}} u_{n}^{2}(x)\frac{|\psi_{\rho}(x)-\psi_{\rho}(y)|^{2}}{|x-y|^{N+2s}} \, dx dy
 \nonumber\\
&=\lim_{K\rightarrow \infty}\lim_{\rho\rightarrow 0} \limsup_{n\rightarrow \infty} \iint_{\R^{2N}} u_{n}^{2}(x)\frac{|\psi_{\rho}(x)-\psi_{\rho}(y)|^{2}}{|x-y|^{N+2s}} \, dx dy =0,
\end{align*}
that is \eqref{NIOO} holds.\\
Now, by using \eqref{CML} and taking the limit as $n\rightarrow \infty$ and $\rho\rightarrow 0$ in \eqref{amici}, we can deduce that \eqref{Alessia1} and \eqref{nio} yield $\nu_{i}\geq \mu_{i}+\sigma_{i}$.
In view of the last statement in \eqref{CML}, we have $\nu_{i}\geq \widetilde{S}_{K}^{\frac{2}{2^{*}_{s}}}$, and by using Lemma \ref{lem2.2Alves}, \eqref{2.1} and $p>2$, we get
\begin{align*}
c&=\J_{\e}(u_{n}, v_{n})-\frac{1}{2}\langle \J'_{\e}(u_{n}, v_{n}), (u_{n}, v_{n})\rangle+o_{n}(1) \\
&=\int_{\R^{N}\setminus \Lambda_{\e}} \left[\frac{1}{2}(u_{n} H_{u}(\e x, u_{n}, v_{n})+v_{n} H_{v}(\e x, u_{n}, v_{n}))-H(\e x, u_{n}, v_{n})\right] \, dx \\
&+\int_{\Lambda_{\e}} \left[\frac{1}{2}(u_{n} Q_{u}(u_{n}, v_{n})+v_{n} Q_{v}(u_{n}, v_{n}))-Q(u_{n}, v_{n})\right] \, dx \\
&+\frac{s}{N}\int_{\Lambda_{\e}} K(u_{n}, v_{n}) \, dx+o_{n}(1) \\
&\geq \frac{s}{N}\int_{\Lambda_{\e}} K(u_{n}, v_{n}) \, dx+o_{n}(1) \\
&\geq \frac{s}{N} \int_{\Lambda_{\e}} \psi_{\rho} K(u_{n}, v_{n}) \, dx+o_{n}(1).
\end{align*}
Then, by using \eqref{CML} and taking the limit as $n\rightarrow \infty$, we find
\begin{align*}
c\geq \frac{s}{N} \sum_{\{i\in I: x_{i}\in \Lambda_{\e}\}} \psi_{\rho}(x_{i}) \nu_{i} =\frac{s}{N} \sum_{\{i\in I: x_{i}\in \Lambda_{\e}\}} \nu_{i} \geq \frac{s}{N} \widetilde{S}^{\frac{N}{2s}}_{K},
\end{align*}
which gives a contradiction. This ends the proof of \eqref{ioooo}.

\end{proof}

\noindent
In the next lemma we establish some useful properties of the Nehari manifold $\N_{\e}$.
\begin{lemma}\label{lemma2.2}
There exist positive constants $a_{1}$, $c$ such that, for each $a\in (0, a_{1})$, $(u,v)\in \N_{\e}$, there hold
\begin{equation}\label{5}
\int_{\Lambda_{\e}} (pQ(u, v)+K(u, v)) \, dx\geq c
\end{equation}
and 
\begin{equation}\label{6}
\int_{\R^{N}\setminus \Lambda_{\e}} (V(\e x) u^{2} + W(\e x) v^{2}) \, dx\leq 2 \int_{\Lambda_{\e}} (p Q(u, v)+K(u, v)) \, dx. 
\end{equation}
\end{lemma}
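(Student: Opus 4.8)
The plan is to combine the Nehari identity $\langle\J'_\e(u,v),(u,v)\rangle=0$ with the structural estimates of Lemma \ref{lem2.2Alves}. First fix $k=2$ and, by Lemma \ref{lem2.2Alves}, choose $a_1>0$ so small that \eqref{2.6A} holds with this $k$ for every $a\in(0,a_1)$ (in particular $A<\alpha/4$). For $(u,v)\in\N_\e$ I would split the integral $\int_{\R^N}(uH_u(\e x,u,v)+vH_v(\e x,u,v))\,dx$ over $\Lambda_\e$ and $\R^N\setminus\Lambda_\e$: on $\Lambda_\e$ one has $H(\e x,u,v)=Q(u,v)+\frac{1}{2^*_s}K(u,v)$, so the homogeneity identity \eqref{2.1} (with $q=p$ for $Q$ and $q=2^*_s$ for $K$) gives $uH_u+vH_v=pQ(u,v)+K(u,v)$ there, while on $\R^N\setminus\Lambda_\e$ estimate \eqref{2.6A} gives $uH_u+vH_v\le\tfrac1k(V(\e x)u^2+W(\e x)v^2)$. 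Using also $\int_{\R^N\setminus\Lambda_\e}(V(\e x)u^2+W(\e x)v^2)\,dx\le\|(u,v)\|_\e^2$, this yields
$$\|(u,v)\|_\e^2\le\int_{\Lambda_\e}(pQ(u,v)+K(u,v))\,dx+\tfrac1k\|(u,v)\|_\e^2,$$
hence $\bigl(1-\tfrac1k\bigr)\|(u,v)\|_\e^2\le\int_{\Lambda_\e}(pQ(u,v)+K(u,v))\,dx$, i.e. $\tfrac12\|(u,v)\|_\e^2\le\int_{\Lambda_\e}(pQ(u,v)+K(u,v))\,dx$. Estimate \eqref{6} is then immediate, since $\int_{\R^N\setminus\Lambda_\e}(V(\e x)u^2+W(\e x)v^2)\,dx\le\|(u,v)\|_\e^2\le 2\int_{\Lambda_\e}(pQ(u,v)+K(u,v))\,dx$.

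For \eqref{5} I would derive a lower bound on $\|(u,v)\|_\e$ that is uniform in $\e$ and $(u,v)\in\N_\e$. By \eqref{2.1}, $(Q2)$ and $(K3)$ one has $pQ(u,v)+K(u,v)=uQ_u+vQ_v+\frac{1}{2^*_s}(uK_u+vK_v)\le C(|u|^p+|v|^p)+C(|u|^{2^*_s}+|v|^{2^*_s})$; integrating over $\Lambda_\e\subset\R^N$ and invoking the continuous embeddings of Theorem \ref{Sembedding}, together with the fact that $\|\cdot\|_\e$ controls the norm of $H^s(\R^N)\times H^s(\R^N)$ uniformly in $\e$ (by $(H3)$, since $V(\e x)\ge V(x_0)>0$ and $W(\e x)\ge W(x_0)>0$), gives
$$\int_{\Lambda_\e}(pQ(u,v)+K(u,v))\,dx\le C_1\|(u,v)\|_\e^{p}+C_2\|(u,v)\|_\e^{2^*_s}$$
with $C_1,C_2>0$ independent of $\e$ and $(u,v)$. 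Combining this with $\tfrac12\|(u,v)\|_\e^2\le\int_{\Lambda_\e}(pQ(u,v)+K(u,v))\,dx$ and dividing by $\|(u,v)\|_\e^2>0$ (recall $(u,v)\ne(0,0)$ on $\N_\e$) yields $\tfrac12\le C_1\|(u,v)\|_\e^{p-2}+C_2\|(u,v)\|_\e^{2^*_s-2}$; since $p>2$ and $2^*_s>2$, this forces $\|(u,v)\|_\e\ge\rho_*$ for some $\rho_*>0$ independent of $\e$ and $(u,v)$. Then \eqref{5} follows with $c:=\tfrac12\rho_*^2$.

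The computation is a standard Nehari-manifold coercivity argument, and the main thing to watch is the uniformity of all constants with respect to $\e$: the Sobolev constant of Theorem \ref{Sembedding} is $\e$-independent, and the equivalence of $\|\cdot\|_\e$ with the standard $H^s(\R^N)\times H^s(\R^N)$ norm is uniform in $\e$ precisely because of the global lower bounds in $(H3)$, so no estimate degenerates as $\e\to0$. This is exactly the scenario for which the penalization of Lemma \ref{lem2.2Alves} was designed, so beyond this bookkeeping no substantial obstacle is expected.
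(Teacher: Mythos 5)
Your proposal is correct and follows essentially the same route as the paper: both split the Nehari identity over $\Lambda_{\e}$ and its complement, use the homogeneity identity \eqref{2.1} on $\Lambda_{\e}$ and \eqref{2.6A} with $k=2$ outside, obtain $\tfrac{1}{2}\|(u,v)\|_{\e}^{2}\leq \int_{\Lambda_{\e}}(pQ+K)\,dx$, and combine this with the $\e$-uniform lower bound $\|(u,v)\|_{\e}\geq c_{2}$ coming from $(Q2)$, $(K3)$ and Theorem \ref{Sembedding}. The only difference is the order in which the two inequalities are derived, which is immaterial.
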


\begin{proof}
By using \eqref{2.6A}, $(Q2)$, $(K2)$ and applying Theorem \ref{Sembedding}, we can see that for any $(u, v)\in \N_{\e}$ it holds
\begin{align*}
\|(u, v)\|^{2}_{\e}&=\int_{\Lambda_{\e}} (u Q_{u} + v Q_{v}+\frac{1}{2^{*}_{s}}(u K_{u}+v K_{v})) \, dx+ \int_{\R^{N}\setminus \Lambda_{\e}} (u H_{u} + vH_{v}) \, dx \\
&\leq C \int_{\Lambda_{\e}} (|u|^{p}+|v|^{p})+(|u|^{2^{*}_{s}}+|v|^{2^{*}_{s}}) \, dx+\frac{1}{2} \int_{\R^{N}} (V(\e x) u^{2} + W(\e x) v^{2}) \, dx \\
&\leq C \left[\|(u, v)\|^{p}_{\e}+\|(u, v)\|^{2^{*}_{s}}_{\e}\right]+\frac{1}{2} \|(u, v)\|^{2}_{\e}
\end{align*} 
which implies that there is $c_{2}>0$ such that
\begin{equation*}
\|(u, v)\|_{\e} \geq c_{2} \, \mbox{ for any } (u, v)\in \N_{\e}.
\end{equation*}
In view of \eqref{2.1} and \eqref{2.6A}, we obtain
\begin{align*}
c_{2}^{2} \leq \|(u,v)\|_{\e}^{2} &= \int_{\Lambda_{\e}} (u Q_{u} + v Q_{v}+\frac{1}{2^{*}_{s}}(u K_{u}+v K_{v})) \, dx+ \int_{\R^{N}\setminus \Lambda_{\e}} (u H_{u} + vH_{v}) \, dx\\
&\leq  \int_{\Lambda_{\e}} (p Q(u, v)+K(u, v)) \, dx+ \frac{1}{2} \int_{\R^{N}\setminus \Lambda_{\e}} (V(\e x) u^{2} + W(\e x) v^{2}) \, dx,  
\end{align*}
which gives
\begin{equation*}
\frac{c_{2}^{2}}{2}\leq \frac{1}{2} \|(u,v)\|_{\e}^{2} \leq  \int_{\Lambda_{\e}} (p Q(u, v)+K(u, v)) \, dx. 
\end{equation*}
Therefore, \eqref{5} holds with $c= \frac{c_{2}^{2}}{2}$. \\
Now, taking into account $(u,v)\in \N_{\e}$, \eqref{2.1} and \eqref{2.6A}, we get
\begin{align*}
\int_{\R^{N}\setminus \Lambda_{\e}} &(V(\e x) u^{2} + W(\e x) v^{2}) \, dx \leq \|(u, v)\|^{2}_{\e} \\
&\leq \int_{\R^{N}} (uH_{u} + vH_{v}) \, dx \\
&=\int_{\Lambda_{\e}} (p Q(u, v)+K(u, v)) \,dx+\int_{\R^{N}\setminus \Lambda_{\e}} (uH_{u} + vH_{v}) \, dx \\
&\leq \int_{\Lambda_{\e}} (p Q(u, v)+K(u, v)) \,dx+\frac{1}{2} \int_{\R^{N}\setminus \Lambda_{\e}} (V(\e x)u^{2} + W(\e x) v^{2} ) \, dx 
\end{align*}
which implies that \eqref{6} is satisfied.  
\end{proof}

\begin{lemma}\label{lemma2.3}
Let $\phi_{\e}: X_{\e} \rightarrow \R$ be given by
\begin{equation*}
\phi_{\e}(u, v):= \|(u,v)\|_{\e}^{2} - \int (uH_{u}(\e x, u, v) + vH_{v}(\e x, u,v)) \, dx. 
\end{equation*}
Then, there exist $a_{2}, b>0$ such that, for each $a\in (0, a_{2})$, 
\begin{equation}\label{7}
\langle\phi_{\e}'(u,v),(u,v)\rangle\leq -b<0 \, \mbox{ for each } (u, v)\in \N_{\e}. 
\end{equation}
\end{lemma}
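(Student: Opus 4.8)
The plan is to compute $\langle\phi_\e'(u,v),(u,v)\rangle$ for $(u,v)\in\N_\e$, split the resulting integral over $\Lambda_\e$ and $\R^N\setminus\Lambda_\e$, and exploit the homogeneity identities \eqref{2.1}--\eqref{2.11} together with the estimates \eqref{5} and \eqref{6} of Lemma \ref{lemma2.2}. Since $H(\cdot,u,v)$ is of class $C^2$ (by the regularity and homogeneity of $Q$, $K$ and $\eta$), we have $\phi_\e\in C^1(\X_\e,\R)$; differentiating $t\mapsto\phi_\e(tu,tv)$ at $t=1$ and using that $g:=uH_u+vH_v$ satisfies $ug_u+vg_v=uH_u+vH_v+u^2H_{uu}+2uvH_{uv}+v^2H_{vv}$, we obtain
\[
\langle\phi_\e'(u,v),(u,v)\rangle=2\|(u,v)\|_\e^2-\int_{\R^N}\bigl[uH_u+vH_v+u^2H_{uu}+2uvH_{uv}+v^2H_{vv}\bigr]\,dx .
\]
Since $(u,v)\in\N_\e$ gives $\|(u,v)\|_\e^2=\int_{\R^N}(uH_u+vH_v)\,dx$, this reduces to $\langle\phi_\e'(u,v),(u,v)\rangle=\int_{\R^N}\bigl[(uH_u+vH_v)-(u^2H_{uu}+2uvH_{uv}+v^2H_{vv})\bigr]\,dx$. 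On $\Lambda_\e$, where $H=Q+\frac{1}{2^{*}_{s}}K$, the identities \eqref{2.1}--\eqref{2.11} (with $q=p$ for $Q$ and $q=2^{*}_{s}$ for $K$) make the integrand equal to $p(2-p)Q(u,v)+(2-2^{*}_{s})K(u,v)$, which is $\le-(p-2)\bigl(pQ(u,v)+K(u,v)\bigr)$ since $2<p<2^{*}_{s}$ and $Q,K\ge0$; hence the $\Lambda_\e$-contribution is at most $-(p-2)\int_{\Lambda_\e}(pQ(u,v)+K(u,v))\,dx$.

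On $\R^N\setminus\Lambda_\e$ we have $H=\hat Q=\eta(r)[Q+\frac{1}{2^{*}_{s}}K]+(1-\eta(r))A(u^2+v^2)$, $r=|(u,v)|$, and I would compute the integrand by differentiating $t\mapsto\hat Q(tu,tv)$ at $t=1$, using the homogeneity of $Q$, $K$ and $A(u^2+v^2)$. This yields an expression of the form $r\eta'(r)\,\Theta_1+r^2\eta''(r)\,\Theta_2+\eta(r)\bigl[p(2-p)Q+(2-2^{*}_{s})K\bigr]$, where $\Theta_1,\Theta_2$ are explicit linear combinations of $Q$, $K$ and $A(u^2+v^2)$. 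Where $\eta'(r)=\eta''(r)=0$ the integrand is $\le0$ (it equals $p(2-p)Q+(2-2^{*}_{s})K\le0$ for $r\le a$, and vanishes identically for $r\ge5a$, because $A(u^2+v^2)$ is $2$-homogeneous so its first- and second-order Euler expressions cancel), so only the annulus $a<r<5a$ requires work. There, $|r\eta'(r)|\le C$ and $|r^2\eta''(r)|\le C$ by \eqref{3}, while $0\le Q(u,v)\le Cr^p$ and $0\le K(u,v)\le Cr^{2^{*}_{s}}$ by homogeneity (and $(Q2)$, $(K3)$); since $p,2^{*}_{s}>2$ and $r\sim a$ on the annulus, the whole expression is bounded in absolute value by $C(a^{p-2}+A)(u^2+v^2)$. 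As $A\to0$ when $a\to0^{+}$ and $V,W\ge\alpha>0$ by $(H3)$, this gives
\[
(uH_u+vH_v)-\bigl(u^2H_{uu}+2uvH_{uv}+v^2H_{vv}\bigr)\le\omega(a)\bigl(V(\e x)u^2+W(\e x)v^2\bigr)\quad\text{on }\R^N\setminus\Lambda_\e,
\]
with $\omega(a)\to0$ as $a\to0^{+}$; integrating and using \eqref{6} of Lemma \ref{lemma2.2} bounds the $\R^N\setminus\Lambda_\e$-contribution by $2\omega(a)\int_{\Lambda_\e}(pQ(u,v)+K(u,v))\,dx$.

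Putting the two contributions together gives $\langle\phi_\e'(u,v),(u,v)\rangle\le[-(p-2)+2\omega(a)]\int_{\Lambda_\e}(pQ(u,v)+K(u,v))\,dx$. Choosing $a_2\in(0,a_1]$ so small that $2\omega(a)\le\tfrac{p-2}{2}$ for every $a\in(0,a_2)$, and invoking \eqref{5} of Lemma \ref{lemma2.2} (which yields $\int_{\Lambda_\e}(pQ+K)\,dx\ge c>0$), we conclude $\langle\phi_\e'(u,v),(u,v)\rangle\le-\tfrac{p-2}{2}c=:-b<0$ for all $(u,v)\in\N_\e$ and $a\in(0,a_2)$. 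I expect the only genuinely delicate point to be the estimate on the annulus $a<|(u,v)|<5a$: one must verify that the potentially singular factor $\eta''\sim a^{-2}$ is absorbed by the superquadratic vanishing $Q\lesssim r^p$, $K\lesssim r^{2^{*}_{s}}$ of the nonlinearities (and by $A\to0$), so that the contribution of $\R^N\setminus\Lambda_\e$ is an $o(1)$-perturbation of the strictly negative term produced by $\Lambda_\e$.
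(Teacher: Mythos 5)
Your proposal is correct and follows essentially the same route as the paper: reduce $\langle\phi_{\e}'(u,v),(u,v)\rangle$ to $\int_{\R^{N}}(D_{1}-D_{2})\,dx$ via the Nehari constraint, use the Euler identities \eqref{2.1}--\eqref{2.11} on $\Lambda_{\e}$ to produce the strictly negative term $-C\int_{\Lambda_{\e}}(pQ+K)\,dx$, and absorb the contribution of $\R^{N}\setminus\Lambda_{\e}$ via \eqref{6} after showing it is $o(1)(V(\e x)u^{2}+W(\e x)v^{2})$ as $a\to 0^{+}$. The only difference is that you carry out in full the annulus estimate $a<|(u,v)|<5a$ (controlling $r\eta'$, $r^{2}\eta''$ against the superquadratic vanishing of $Q$, $K$ and $A\to 0$), which the paper delegates to Lemma $4.2$ of \cite{A5}; your version of that step is sound.
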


\begin{proof}
Let $a$ sufficiently small so that Lemma \ref{lemma2.2} holds, and fix $(u,v)\in \N_{\e}$.
By using the definition of $H$, \eqref{2.1} and \eqref{2.11}, we can see that
\begin{align*}
&\langle\phi_{\e}'(u,v),(u,v)\rangle \\
&= \int_{\Lambda_{\e}} (uQ_{u} + vQ_{v}+\frac{1}{2^{*}_{s}}(u K_{u}+v K_{v})) \, dx-\int_{\Lambda_{\e}} (u^{2}Q_{uu} + v^{2}Q_{vv} +2uvQ_{uv}) \, dx\\
&-\frac{1}{2^{*}_{s}}\int_{\Lambda_{\e}} (u^{2}K_{uu} + v^{2}K_{vv} +2uvK_{uv}) \, dx+\int_{\R^{N}\setminus \Lambda_{\e}} (D_{1}-D_{2}) \, dx \\
&=-p(p-2) \int_{\Lambda_{\e}} Q(u,v)\, dx-(2^{*}_{s}-2)\int_{\Lambda_{\e}} K(u, v) \, dx + \int_{\R^{N}\setminus \Lambda_{\e}} (D_{1}- D_{2}) \, dx
\end{align*}
where 
\begin{equation*}
D_{1}:= (uH_{u}+ vH_{v}) \, \mbox{ and } \, D_{2}:= (u^{2}H_{uu} + v^{2}H_{vv} +2uvH_{uv}). 
\end{equation*}
Since $2<p<2^{*}_{s}$, we can see that
\begin{align}\label{MIOO}
\langle\phi_{\e}'(u,v),(u,v)\rangle \leq -p(p-2) \int_{\Lambda_{\e}} (pQ(u,v)+K(u, v))\, dx+ \int_{\R^{N}\setminus \Lambda_{\e}} (D_{1}- D_{2}) \, dx.
\end{align}
Then we can proceed as in the proof of Lemma $4.2$ in \cite{A5} to prove that, for any $a>0$ sufficiently small 
\begin{equation}\label{MIOOO}
\int_{\R^{N}\setminus \Lambda_{\e}} |D_{1}|+|D_{2}|\, dx\leq \frac{p-2}{4} \int_{\R^{N}\setminus \Lambda_{\e}} V(\e x)u^{2}+W(\e x) v^{2} \,dx.
\end{equation}
Putting together \eqref{MIOO} and \eqref{MIOOO}, and by applying Lemma \ref{lemma2.2}, we have 
\begin{equation*}
\langle\phi_{\e}'(u,v),(u,v)\rangle\leq -\frac{(p-2)}{2} \int_{\Lambda_{\e}} (pQ(u,v)+K(u, v))  \, dx\leq -\frac{(p-2)}{2}c = -b<0. 
\end{equation*}
 
\end{proof}

\noindent
At this point, we are able to deduce the main compactness result of this section.
\begin{proposition}\label{prop1}
The functional $\J_{\e}$ restricted to $\N_{\e}$ satisfies $(PS)_{c}$ for each $c\in \R$. 
\end{proposition}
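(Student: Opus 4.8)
The plan is to show that a Palais--Smale sequence for the restricted functional is, up to a vanishing Lagrange multiplier, a Palais--Smale sequence for the free functional $\J_{\e}$ on all of $\X_{\e}$, and then to invoke Lemma \ref{lemma3.2-1}. So let $\{(u_{n}, v_{n})\}\subset\N_{\e}$ satisfy $\J_{\e}(u_{n},v_{n})\to c$ and the differential of $\J_{\e}$ along $\N_{\e}$ at $(u_{n},v_{n})$ tend to $0$. Writing $\phi_{\e}(u,v)=\langle\J_{\e}'(u,v),(u,v)\rangle=\|(u,v)\|_{\e}^{2}-\int_{\R^{N}}(uH_{u}(\e x,u,v)+vH_{v}(\e x,u,v))\,dx$, Lemma \ref{lemma2.3} gives $\langle\phi_{\e}'(u,v),(u,v)\rangle\le -b<0$ on $\N_{\e}$; in particular $\phi_{\e}'$ does not vanish on $\N_{\e}$, so $\N_{\e}$ is a $C^{1}$ manifold and the Lagrange multiplier rule furnishes $\lambda_{n}\in\R$ with $\J_{\e}'(u_{n},v_{n})=\lambda_{n}\phi_{\e}'(u_{n},v_{n})+o_{n}(1)$ in $\X_{\e}^{*}$.

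First I would establish boundedness of $\{(u_{n},v_{n})\}$ in $\X_{\e}$. Since $(u_{n},v_{n})\in\N_{\e}$ one has \emph{exactly} $\|(u_{n},v_{n})\|_{\e}^{2}=\int_{\R^{N}}(u_{n}H_{u}+v_{n}H_{v})\,dx$; combining this with $\J_{\e}(u_{n},v_{n})=c+o_{n}(1)$ and the estimates \eqref{2.4A}--\eqref{2.6A}, exactly as in the boundedness step of the proof of Lemma \ref{lemma3.2-1} (split the nonlinear term over $\Lambda_{\e}$ and $\R^{N}\setminus\Lambda_{\e}$, use $pH\le uH_{u}+vH_{v}$ on $\Lambda_{\e}$ and $uH_{u}+vH_{v}\le \tfrac{1}{k}(V(\e x)u^{2}+W(\e x)v^{2})$ off $\Lambda_{\e}$, and choose $k>[2(\tfrac{1}{2}-\tfrac{1}{p})]^{-1}$), yields a uniform bound. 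Then, testing the Lagrange identity with $(u_{n},v_{n})$ and using $\langle\J_{\e}'(u_{n},v_{n}),(u_{n},v_{n})\rangle=0$ together with the bound just obtained, one finds $\lambda_{n}\langle\phi_{\e}'(u_{n},v_{n}),(u_{n},v_{n})\rangle=o_{n}(1)$; since $\langle\phi_{\e}'(u_{n},v_{n}),(u_{n},v_{n})\rangle\le -b<0$ by Lemma \ref{lemma2.3}, this forces $b\,|\lambda_{n}|\le o_{n}(1)$, hence $\lambda_{n}\to 0$. Because $H$ inherits the growth of $Q$ and $K$, the homogeneity identities \eqref{2.1}, \eqref{2.11} together with $(Q2)$, $(K3)$ show that $\phi_{\e}'(u_{n},v_{n})$ stays bounded in $\X_{\e}^{*}$, so $\J_{\e}'(u_{n},v_{n})=\lambda_{n}\phi_{\e}'(u_{n},v_{n})+o_{n}(1)\to 0$ in $\X_{\e}^{*}$. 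Thus $\{(u_{n},v_{n})\}$ is a free $(PS)_{c}$ sequence for $\J_{\e}$, and Lemma \ref{lemma3.2-1} yields a convergent subsequence.

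The main obstacle is precisely the vanishing of the multiplier $\lambda_{n}$, which rests on the uniform strict negativity of $\langle\phi_{\e}'(\cdot),\cdot\rangle$ along $\N_{\e}$ supplied by Lemma \ref{lemma2.3}; this is in turn where the penalization parameter $a$ must be taken small, via \eqref{2.6A} and the lower bounds \eqref{5}--\eqref{6} of Lemma \ref{lemma2.2}. Once the problem is transferred to the free functional, the genuinely delicate point --- the loss of compactness by concentration driven by the critical exponent --- has already been dealt with in Lemma \ref{lemma3.2-1}, where the penalization confines any concentration set to the interior of $\Lambda_{\e}$ and the energy threshold $\tfrac{s}{N}\widetilde{S}_{K}^{\frac{N}{2s}}$ excludes it; so no new analytic difficulty enters at the level of the Nehari constraint beyond routine bookkeeping.
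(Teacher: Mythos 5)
Your argument coincides with the paper's proof: the same Lagrange multiplier decomposition $\J_{\e}'(u_{n},v_{n})=\lambda_{n}\phi_{\e}'(u_{n},v_{n})+o_{n}(1)$, the same use of the uniform bound $\langle\phi_{\e}'(u_{n},v_{n}),(u_{n},v_{n})\rangle\le -b<0$ from Lemma \ref{lemma2.3} to force $\lambda_{n}\to 0$, and the same reduction to the free Palais--Smale condition of Lemma \ref{lemma3.2-1}. The proposal is correct; it merely spells out the boundedness step and the boundedness of $\phi_{\e}'(u_{n},v_{n})$ in $\X_{\e}^{*}$, which the paper leaves as ``standard calculations''.
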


\begin{proof}
Let $\{(u_{n}, v_{n})\}\subset \N_{\e}$ be such that
\begin{equation*}
\J_{\e}(u_{n}, v_{n})\rightarrow c \, \mbox{ and } \, \|\J_{\e}'(u_{n}, v_{n})\|_{*}=o_{n}(1), 
\end{equation*}
where $o_{n}(1)$ goes to zero when $n\rightarrow \infty$. Then, there exists $\{\lambda_{n}\}\subset \R$ satisfying
\begin{equation*}
\J_{\e}'(u_{n}, v_{n})= \lambda_{n} \phi_{\e}'(u_{n}, v_{n})+ o_{n}(1), 
\end{equation*}
with $\phi_{\e}$ as in Lemma \ref{lemma2.3}. Due to the fact that $(u_{n}, v_{n})\in \N_{\e}$, we obtain
\begin{equation}\label{Ter}
0= \langle\J_{\e}'(u_{n}, v_{n}),(u_{n}, v_{n})\rangle= \lambda_{n} \langle\phi_{\e}'(u_{n}, v_{n}),(u_{n}, v_{n})\rangle + o_{n}(1) \|(u_{n}, v_{n})\|_{\e}. 
\end{equation}
Standard calculations show that $\{(u_{n}, v_{n})\}$ is bounded in $\X_{\e}$. In view of Lemma \ref{lemma2.3}, we may assume that $\langle\phi_{\e}'(u_{n}, v_{n}),(u_{n}, v_{n})\rangle\rightarrow \ell<0$. By using \eqref{Ter}, we can deduce that $\lambda_{n}\rightarrow 0$ and as a consequence $\J_{\e}'(u_{n}, v_{n})\rightarrow 0$ in the dual space of $\X_{\e}$. By applying Lemma \ref{lemma3.2-1}, we can infer that $\{(u_{n}, v_{n})\}$ admits a convergent subsequence. 
\end{proof}

\noindent
Arguing as in the proof of the above proposition, it is easy to see that
\begin{corollary}\label{cor3.5}
The critical points of $\J_{\e}$ constrained to $\N_{\e}$ are critical points of $\J_{\e}$ in $\X_{\e}$.
\end{corollary}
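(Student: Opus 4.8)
The plan is to run the standard Nehari-manifold argument, using Lemma \ref{lemma2.3} to kill the Lagrange multiplier. First I would record the key structural fact: by the very definition of $\phi_{\e}$ in Lemma \ref{lemma2.3} and of $\J_{\e}$, one has $\phi_{\e}(u,v)=\langle \J_{\e}'(u,v),(u,v)\rangle$ for every $(u,v)\in\X_{\e}$, so that
\[
\N_{\e}=\{(u,v)\in\X_{\e}\setminus\{(0,0)\}:\phi_{\e}(u,v)=0\}.
\]
Since $\J_{\e},\phi_{\e}\in C^{1}(\X_{\e},\R)$ and, by \eqref{7}, $\langle\phi_{\e}'(u,v),(u,v)\rangle\leq -b<0$ for all $(u,v)\in\N_{\e}$, in particular $\phi_{\e}'(u,v)\neq 0$ on $\N_{\e}$; hence $0$ is a regular value of $\phi_{\e}$ on $\X_{\e}\setminus\{(0,0)\}$ and $\N_{\e}$ is a $C^{1}$ submanifold of codimension one.

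Next, let $(u,v)\in\N_{\e}$ be a critical point of the constrained functional $\J_{\e}|_{\N_{\e}}$. By the Lagrange multiplier rule there is $\lambda\in\R$ with
\[
\J_{\e}'(u,v)=\lambda\,\phi_{\e}'(u,v)\quad\text{in }\X_{\e}^{*}.
\]
Testing this identity against $(u,v)$ and using that $(u,v)\in\N_{\e}$ forces $\langle\J_{\e}'(u,v),(u,v)\rangle=\phi_{\e}(u,v)=0$, we get
\[
0=\langle\J_{\e}'(u,v),(u,v)\rangle=\lambda\,\langle\phi_{\e}'(u,v),(u,v)\rangle.
\]
By \eqref{7} the factor $\langle\phi_{\e}'(u,v),(u,v)\rangle$ is at most $-b<0$, hence nonzero, so $\lambda=0$. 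Substituting $\lambda=0$ back yields $\J_{\e}'(u,v)=0$ in $\X_{\e}^{*}$, i.e.\ $(u,v)$ is a free critical point of $\J_{\e}$.

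The only delicate point is the appeal to the Lagrange multiplier theorem, which requires $\N_{\e}$ to be a bona fide $C^{1}$ manifold; but this is exactly what \eqref{7} of Lemma \ref{lemma2.3} provides, and it is the same mechanism already exploited in the proof of Proposition \ref{prop1} to show $\lambda_{n}\to 0$. Apart from this, the argument is entirely routine, so I expect no real obstacle.
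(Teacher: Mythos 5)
Your argument is correct and is exactly the mechanism the paper intends: the paper proves the corollary by remarking that one argues as in Proposition \ref{prop1}, where the Lagrange multiplier identity $\J_{\e}'=\lambda\phi_{\e}'$ is tested against $(u,v)$ and the bound $\langle\phi_{\e}'(u,v),(u,v)\rangle\leq -b<0$ from Lemma \ref{lemma2.3} forces the multiplier to vanish. Your write-up simply makes explicit (correctly) the identification $\phi_{\e}(u,v)=\langle\J_{\e}'(u,v),(u,v)\rangle$ and the $C^{1}$-manifold structure of $\N_{\e}$ needed to invoke the multiplier rule.
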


\section{Multiplicity of solutions to \eqref{P''}}

\noindent
This section is devoted to the study of the multiplicity of solutions for the system \eqref{P''}.

For this reason, let $\delta>0$ such that
$$
M_{\delta}=\{x\in \R^{N}: dist(x, M)\leq \delta\}\subset \Lambda,
$$
and take $\psi\in C^{\infty}_{0}(\R_{+}, [0, 1])$ is a function satisfying $\psi(t)=1$ if $0\leq t\leq \frac{\delta}{2}$ and $\psi(t)=0$ if $t\geq \delta$.

For any $y\in M$, we define
$$
\Psi_{i, \e, y}(x)=\psi(|\e x-y|) w_{i}\left(\frac{\e x-y}{\e}\right) \quad i=1, 2,
$$
and denote by $t_{\e}>0$ the unique positive number such that 
$$
\max_{t\geq 0} \J_{\e}(t \Psi_{1, \e, y}, t \Psi_{2, \e, y})=\J_{\e}(t_{\e} \Psi_{1,\e, y}, t_{\e} \Psi_{2,\e, y}),
$$
where  $(w_{1}, w_{2})\in \X_{0}$ is a solution for \eqref{P0} with $\xi=x_{0}$ (such solution exists in view of Theorem \ref{prop4.3}) such that $\J_{x_{0}}(w_{1}, w_{2})=C(x_{0})$.\\
Let $\Phi_{\e}: M\rightarrow \N_{\e}$ be given by
$$
\Phi_{\e}(y)=(t_{\e} \Psi_{1, \e, y}, t_{\e} \Psi_{2, \e, y}).
$$
Then we have the following result.
\begin{lemma}\label{lemma3.4FS}
The functional $\Phi_{\e}$ satisfies the following limit
\begin{equation}\label{3.2}
\lim_{\e\rightarrow 0} \J_{\e}(\Phi_{\e}(y))=C^{*} \mbox{ uniformly in } y\in M.
\end{equation}
\end{lemma}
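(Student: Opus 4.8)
The plan is to perform a change of variables that turns $\J_\e(\Phi_\e(y))$ into an expression converging to $\J_{x_0}(w_1,w_2)=C(x_0)=C^*$, and then to control all error terms uniformly in $y\in M$. First I would write out $\J_\e(t_\e\Psi_{1,\e,y},t_\e\Psi_{2,\e,y})$ explicitly and substitute $z=\frac{\e x-y}{\e}$, i.e. $x=\frac{y}{\e}+z$. Under this substitution the functions $\Psi_{i,\e,y}$ become $\psi(\e|z|)\,w_i(z)$, the kinetic (Gagliardo) term becomes $\iint \frac{|\psi(\e|z|)w_i(z)-\psi(\e|w|)w_i(w)|^2}{|z-w|^{N+2s}}\,dz\,dw$, and the potential terms become $\int V(\e z+y)(\psi(\e|z|)w_1(z))^2\,dz$ and similarly with $W$. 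As $\e\to0$, since $\psi\equiv1$ near the origin and $w_i$ decays, the cut-off $\psi(\e|\cdot|)$ converges to $1$ in the relevant norms, so $\psi(\e|\cdot|)w_i\to w_i$ strongly in $H^s(\R^N)$; I would prove this strong convergence first, using the decay of $w_i$ from Remark~\ref{remark} together with the bound $|\nabla\psi|\le C$ to estimate the commutator-type term $\iint\frac{|\psi(\e|z|)-\psi(\e|w|)|^2}{|z-w|^{N+2s}}w_i(z)^2\,dz\,dw$. Because $y\in M$ ranges over a compact set, $V(\e z+y)\to V(x_0)$ — wait, more carefully: $V(\e z+y)\to V(y)$ pointwise, and since the relevant solution is built at $\xi=x_0$ I should note that $M=\{x: C(x)=\inf C\}$ and by Lemma~\ref{C00} every $y\in M$ has $C(y)=C^*$; so I would actually want $\J_{y}(w_1,w_2)$ rather than $\J_{x_0}(w_1,w_2)$, but since $C(y)=C^*=C(x_0)$ for all $y\in M$ this is consistent, and by dominated convergence $\int V(\e z+y)\psi(\e|z|)^2 w_1^2\,dz\to V(y)\int w_1^2\,dz$ uniformly in $y\in M$ by compactness of $M$ and continuity of $V$.

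Next I would handle the nonlinear terms. Since $\supp\psi(|\e x-y|)\subset\{x:\e x\in M_\delta\}\subset\Lambda_\e$ (for $\e$ small, using $M_\delta\subset\Lambda$), the penalized nonlinearity $H(\e x,\cdot,\cdot)$ coincides with $Q+\frac{1}{2^*_s}K$ on the support of $\Psi_{i,\e,y}$, so $\int H(\e x,t_\e\Psi_{1,\e,y},t_\e\Psi_{2,\e,y})\,dx=\int[Q+\frac{1}{2^*_s}K](t_\e\psi(\e|z|)w_1,t_\e\psi(\e|z|)w_2)\,dz$ after the change of variables, and by continuity of $Q,K$ and the growth bounds $(Q2),(K3)$ together with the strong $H^s$-convergence $\psi(\e|\cdot|)w_i\to w_i$ (hence strong $L^p$ and $L^{2^*_s}$ convergence by Theorem~\ref{Sembedding}), these integrals converge to $\int Q(t w_1,t w_2)+\frac{1}{2^*_s}K(tw_1,tw_2)\,dz$. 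Thus $\J_\e(t\Psi_{1,\e,y},t\Psi_{2,\e,y})\to\J_{y}(tw_1,tw_2)$ as $\e\to0$, uniformly in $y\in M$ and locally uniformly in $t\ge0$. Then I would show $t_\e\to1$: since $(w_1,w_2)\in\N_y$ (being the ground state at level $C(y)$, which solves \eqref{P0} with $\xi=y$; strictly the given solution solves it at $\xi=x_0$, but the uniform convergence lets one absorb the discrepancy $V(\e z+y)-V(x_0)\to V(y)-V(x_0)$ which vanishes precisely because... hmm — here I should simply take $(w_1,w_2)$ to be the ground state of $\J_{x_0}$ and use that $\J_y(tw_1,tw_2)\to\J_{x_0}(tw_1,tw_2)$ as well since $V(y)=$... no: the clean route is to note $t_\e$ is the maximizer of $t\mapsto\J_\e(t\Psi)$, and the limiting function $t\mapsto\J_{x_0}(tw_1,tw_2)$ has a unique maximizer at $t=1$ with value $C^*$; by a standard argument (the $t_\e$ stay in a compact subset of $(0,\infty)$, using $(MP2)$ for a lower bound and the coercivity-type argument for an upper bound), any limit point $t_0$ of $t_\e$ satisfies $\J_{x_0}(t_0w_1,t_0w_2)\ge\limsup\J_\e(t_\e\Psi)\ge\liminf\J_\e(\Psi)=\J_{x_0}(w_1,w_2)=C^*$, forcing $t_0=1$).

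Finally, combining $t_\e\to1$ with the uniform-in-$y$ convergence $\J_\e(t\Psi_{1,\e,y},t\Psi_{2,\e,y})\to\J_{x_0}(tw_1,tw_2)$ on compact $t$-intervals gives $\J_\e(\Phi_\e(y))=\J_\e(t_\e\Psi_{1,\e,y},t_\e\Psi_{2,\e,y})\to\J_{x_0}(w_1,w_2)=C^*$ uniformly in $y\in M$, which is \eqref{3.2}. The main obstacle I anticipate is establishing the strong $H^s$-convergence $\psi(\e|\cdot|)w_i\to w_i$ with a rate (or at least a bound) that is \emph{uniform in $y\in M$}, particularly controlling the nonlocal Gagliardo seminorm of the truncation error $\iint\frac{|\psi(\e|z|)w_i(z)-\psi(\e|w|)w_i(w)|^2}{|z-w|^{N+2s}}\,dz\,dw-[w_i]_{H^s}^2$; this requires splitting the double integral into regions as in the proof of Lemma~\ref{lemma3.2-1} and exploiting the power-type decay of $w_i$ (Remark~\ref{remark} and the Bessel-kernel estimate \eqref{DEuv}), and the uniformity in $y$ then follows because $y$ enters only through the translated potentials $V(\e z+y),W(\e z+y)$, which converge uniformly on compacta by continuity of $V,W$ and compactness of $M$.
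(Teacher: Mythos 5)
Your proposal is correct and follows essentially the same route as the paper's proof: after the change of variables $z=\frac{\e x-y}{\e}$ the support of $\Psi_{i,\e,y}$ lies in $\Lambda_{\e}$ so the penalization disappears, the norms and the $Q$- and $K$-terms converge to those of $(w_{1},w_{2})$ (the paper invokes Lemma $5$ of \cite{PP} plus dominated convergence for the Gagliardo term, where you propose a hands-on truncation estimate; the paper also phrases the uniformity as a contradiction argument along sequences $\e_{n}\to 0$, $y_{n}\in M$), and $t_{\e}\to 1$ is extracted from the Nehari identity for $(w_{1},w_{2})\in \N_{x_{0}}$ exactly as you indicate. The point you hesitate over --- whether the potential term converges to $V(y)$ or to $V(x_{0})$ --- is treated no more carefully in the paper, which simply asserts that $\|(\Psi_{1,\e_{n},y_{n}},\Psi_{2,\e_{n},y_{n}})\|^{2}_{\e_{n}}\to\|(w_{1},w_{2})\|^{2}_{x_{0}}$, so your argument is on the same footing as the paper's own.
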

\begin{proof}
Assume by contradiction that there there exist $\delta_{0}>0$, $(y_{n})\subset M$ and $\e_{n}\rightarrow 0$ such that 
\begin{equation}\label{4.41}
|\J_{\e_{n}}(\Phi_{\e_{n}}(y_{n}))-C^{*}|\geq \delta_{0}.
\end{equation}
We aim to prove that $\lim_{n\rightarrow \infty}t_{\e_{n}}<\infty$.
Let us observe that by using the change of variable $z=\frac{\e_{n}x-y_{n}}{\e_{n}}$, if $z\in B_{\frac{\delta}{\e_{n}}}(0)$, it follows that $\e_{n} z\in B_{\delta}(0)$ and $\e_{n} z+y_{n}\in B_{\delta}(y_{n})\subset M_{\delta}\subset \Lambda$. 

Then, recalling that $H=Q+\frac{1}{2^{*}_{s}} K$ on $\Lambda$ and $\psi(t)=0$ for $t\geq \delta$, we have
\begin{align}\label{HeZou}
&\J_{\e}(\Phi_{\e_{n}}(y_{n}))\nonumber \\
&=\frac{t_{\e_{n}}^{2}}{2}\int_{\R^{N}} |(-\Delta)^{\frac{s}{2}}(\psi(|\e_{n} z|)w_{1}(z))|^{2}\, dz+\frac{t_{\e_{n}}^{2}}{2}\int_{\R^{N}} |(-\Delta)^{\frac{s}{2}}(\psi(|\e_{n} z|)w_{2}(z))|^{2}\, dz \nonumber\\
&+\frac{t_{\e_{n}}^{2}}{2}\int_{\R^{N}} V(\e_{n} z+y_{n}) (\psi(|\e_{n} z|) w_{1}(z))^{2}\, dz\nonumber \\
&+\frac{t_{\e_{n}}^{2}}{2}\int_{\R^{N}} W(\e_{n} z+y_{n}) (\psi(|\e_{n} z|) w_{2}(z))^{2}\, dz \nonumber\\
&-\int_{\R^{N}} Q(t_{\e_{n}}\psi(|\e_{n} z|)w_{1}(z), t_{\e_{n}}\psi(|\e_{n} z|)w_{2}(z)) \, dz \nonumber \\
&-\frac{1}{2^{*}_{s}}\int_{\R^{N}} K(t_{\e_{n}}\psi(|\e_{n} z|)w_{1}(z), t_{\e_{n}}\psi(|\e_{n} z|)w_{2}(z)) \, dz.
\end{align}
Assume by contradiction that $t_{\e_{n}}\rightarrow \infty$. From the definition of $t_{\e_{n}}$, $(Q1)$, $(K1)$ and \eqref{2.1}, we get
\begin{align}\label{3.9}
\|(\Psi_{1,\e_{n}, y_{n}}, \Psi_{2,\e_{n}, y_{n}})\|^{2}_{\e_{n}}&=p t_{\e_{n}}^{p-2}\int_{\R^{N}} Q(\psi(|\e_{n} z|)w_{1}(z), \psi(|\e_{n} z|)w_{2}(z)) \, dz \nonumber\\
&+t_{\e_{n}}^{2^{*}_{s}-2} \int_{\R^{N}} K(\psi(|\e_{n} z|)w_{1}(z), \psi(|\e_{n} z|)w_{2}(z)) \, dz
\end{align}
Recalling that $\psi=1$ in $B_{\frac{\delta}{2}}(0)$ and $B_{\frac{\delta}{2}}(0)\subset B_{\frac{\delta}{2\e_{n}}}(0)$ for $n$ big enough, and $w_{1}$, $w_{2}$ are continuous and positive in $\R^{N}$ (see Remark \ref{remark}), we can see that
\begin{align}\label{3.10}
\|(\Psi_{1,\e_{n}, y_{n}}, \Psi_{2,\e_{n}, y_{n}})\|^{2}_{\e_{n}}&\geq  t_{\e_{n}}^{2^{*}_{s}-2} \int_{B_{\frac{\delta}{2}}(0)} K(w_{1}(z),w_{2}(z)) \, dz\nonumber \\
&\geq |B_{\frac{\delta}{2}}(0)| \min_{x\in B_{\frac{\delta}{2}}}K(w_{1}(x), w_{2}(x)) t_{\e_{n}}^{2^{*}_{s}-2}.
\end{align}
By passing to the limit as $n\rightarrow \infty$ in (\ref{3.10}) we can deduce that
$$
\lim_{n\rightarrow \infty} \|(\Psi_{1,\e_{n}, y_{n}}, \Psi_{2,\e_{n}, y_{n}})\|^{2}_{\e_{n}}=\infty,
$$
which is impossible due to the fact that Lemma $5$ in \cite{PP} and the Dominated Convergence Theorem imply
$$
\lim_{n\rightarrow \infty} \|(\Psi_{1,\e_{n}, y_{n}}, \Psi_{2,\e_{n}, y_{n}})\|^{2}_{\e_{n}}=\|(w_{1}, w_{2})\|^{2}_{x_{0}}\in (0, \infty).
$$
Thus, $\{t_{\e_{n}}\}$ is bounded, and we can assume that $t_{\e_{n}}\rightarrow t_{0}\geq 0$. Clearly, if $t_{0}=0$, by boundedness of $\|(\Psi_{1,\e_{n}, y_{n}}, \Psi_{2,\e_{n}, y_{n}})\|^{2}_{\e_{n}}$, the growth assumptions on $Q$ and $K$, and (\ref{3.9}), we can see that $\|(\Psi_{1,\e_{n}, y_{n}}, \Psi_{2,\e_{n}, y_{n}})\|^{2}_{\e_{n}}\rightarrow 0$, which gives a contradiction. Hence, $t_{0}>0$.

Now, by using $(Q2)$, $(K3)$ and the Dominated Convergence Theorem, we have as $n\rightarrow \infty$
$$
\int_{\R^{N}} Q(\Psi_{1, \e_{n}, y_{n}}, \Psi_{2, \e_{n}, y_{n}}) dx\rightarrow \int_{\R^{N}} Q(w_{1}, w_{2}) \, dx
$$
and
$$
\int_{\R^{N}} K(\Psi_{1, \e_{n}, y_{n}}, \Psi_{2, \e_{n}, y_{n}}) dx\rightarrow \int_{\R^{N}} K(w_{1}, w_{2}) \, dx.
$$
Taking the limit as $n\rightarrow \infty$ in (\ref{3.9}),  we obtain
$$
\|(w_{1}, w_{2})\|^{2}_{x_{0}}=p t_{0}^{p-2} \int_{\R^{N}} Q(w_{1}, w_{2}) \, dx+t_{0}^{2^{*}_{s}-2}\int_{\R^{N}} K(w_{1}, w_{2}) \, dx .
$$ 
Since $(w_{1}, w_{2})\in \mathcal{N}_{x_{0}}$, we deduce that $t_{0}=1$. Moreover, from \eqref{HeZou}, we get
$$
\lim_{n\rightarrow \infty} \J_{\e}(\Phi_{\e_{n}}(y_{n}))=\J_{x_{0}}(w_{1}, w_{2})=C^{*},
$$
which contradicts (\ref{4.41}).

\end{proof}

\noindent
Now, we take $\rho=\rho(\delta)>0$ such that $M_{\delta}\subset B_{\rho}$, and we consider $\varUpsilon: \R^{N}\rightarrow \R^{N}$ defined by setting
 \begin{equation*}
 \varUpsilon(x)=
 \left\{
 \begin{array}{ll}
 x &\mbox{ if } |x|<\rho \\
 \frac{\rho x}{|x|} &\mbox{ if } |x|\geq \rho.
  \end{array}
 \right.
 \end{equation*}
We define the barycenter map $\beta_{\e}: \N_{\e}\rightarrow \R^{N}$ by setting
\begin{align*}
\beta_{\e}(u, v)=\frac{\int_{\R^{N}} \varUpsilon(\e x)(u^{2}(x)+v^{2}(x)) \,dx}{\int_{\R^{N}} u^{2}(x)+v^{2}(x) \, dx}.
\end{align*}
Since $M\subset B_{\rho}$, by the definition of $\varUpsilon$ and the Dominated Convergence Theorem, we can proceed as in \cite{A5} to see that 
\begin{lemma}\label{lemma3.5FS}
The functional $\Phi_{\e}$ verifies the following limit
\begin{equation}\label{3.3}
\lim_{\e \rightarrow 0} \beta_{\e}(\Phi_{\e}(y))=y \mbox{ uniformly in } y\in M.
\end{equation}
\end{lemma}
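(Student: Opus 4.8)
The plan is to write out $\beta_{\e}(\Phi_{\e}(y))$ explicitly and pass to the limit by dominated convergence. Since $\Phi_{\e}(y)=(t_{\e}\Psi_{1,\e,y},t_{\e}\Psi_{2,\e,y})$, the positive factor $t_{\e}^{2}$ occurs in both the numerator and the denominator of $\beta_{\e}$ and cancels, so that
\begin{equation*}
\beta_{\e}(\Phi_{\e}(y))=\frac{\int_{\R^{N}}\varUpsilon(\e x)\bigl(\Psi_{1,\e,y}^{2}(x)+\Psi_{2,\e,y}^{2}(x)\bigr)\,dx}{\int_{\R^{N}}\bigl(\Psi_{1,\e,y}^{2}(x)+\Psi_{2,\e,y}^{2}(x)\bigr)\,dx}.
\end{equation*}
First I would perform the change of variable $z=\frac{\e x-y}{\e}$, under which $\e x=\e z+y$, $\psi(|\e x-y|)=\psi(\e|z|)$ and $w_{i}\bigl(\tfrac{\e x-y}{\e}\bigr)=w_{i}(z)$, obtaining
\begin{equation*}
\beta_{\e}(\Phi_{\e}(y))=\frac{\int_{\R^{N}}\varUpsilon(\e z+y)\,\psi^{2}(\e|z|)\bigl(w_{1}^{2}(z)+w_{2}^{2}(z)\bigr)\,dz}{\int_{\R^{N}}\psi^{2}(\e|z|)\bigl(w_{1}^{2}(z)+w_{2}^{2}(z)\bigr)\,dz}.
\end{equation*}

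Next I would fix $y\in M$ and let $\e\to0$. Since $(w_{1},w_{2})\in\X_{0}$ we have $w_{1}^{2}+w_{2}^{2}\in L^{1}(\R^{N})$, and since $w_{1},w_{2}$ are positive by Remark \ref{remark} we get $\int_{\R^{N}}(w_{1}^{2}+w_{2}^{2})\,dz\in(0,\infty)$. The map $\varUpsilon$ is continuous and bounded by $\rho$, and $0\le\psi\le1$ with $\psi(0)=1$; hence the integrand of the numerator is dominated by $\rho(w_{1}^{2}+w_{2}^{2})\in L^{1}(\R^{N})$ and that of the denominator by $w_{1}^{2}+w_{2}^{2}\in L^{1}(\R^{N})$, while pointwise $\psi^{2}(\e|z|)\to1$ and $\varUpsilon(\e z+y)\to\varUpsilon(y)=y$, the last equality holding because $M\subset M_{\delta}\subset B_{\rho}$. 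By the Dominated Convergence Theorem,
\begin{equation*}
\beta_{\e}(\Phi_{\e}(y))\longrightarrow\frac{\int_{\R^{N}}y\,(w_{1}^{2}+w_{2}^{2})\,dz}{\int_{\R^{N}}(w_{1}^{2}+w_{2}^{2})\,dz}=y.
\end{equation*}

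Finally, to upgrade this pointwise limit to uniformity in $y\in M$, I would argue by contradiction: if \eqref{3.3} failed there would exist $\delta_{0}>0$, a sequence $\e_{n}\to0$ and points $y_{n}\in M$ with $|\beta_{\e_{n}}(\Phi_{\e_{n}}(y_{n}))-y_{n}|\ge\delta_{0}$. Since $M$ is compact (closed, because $C(\cdot)$ is continuous by Lemma \ref{C0}, and bounded since $M\subset\Lambda$), up to a subsequence $y_{n}\to\bar y\in M$, hence $\e_{n}z+y_{n}\to\bar y$ for every $z$; the same domination as above then shows that the numerator tends to $\varUpsilon(\bar y)\int_{\R^{N}}(w_{1}^{2}+w_{2}^{2})\,dz=\bar y\int_{\R^{N}}(w_{1}^{2}+w_{2}^{2})\,dz$ and the denominator to $\int_{\R^{N}}(w_{1}^{2}+w_{2}^{2})\,dz\in(0,\infty)$, whence $\beta_{\e_{n}}(\Phi_{\e_{n}}(y_{n}))\to\bar y$. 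Combined with $y_{n}\to\bar y$, this contradicts $|\beta_{\e_{n}}(\Phi_{\e_{n}}(y_{n}))-y_{n}|\ge\delta_{0}$. No genuine obstacle is expected here: the only mild points are the integrability and positivity of $w_{1}^{2}+w_{2}^{2}$ (guaranteed by $(w_{1},w_{2})\in\X_{0}$ and Remark \ref{remark}) and the compactness of $M$; the rest is a direct application of dominated convergence, exactly in the spirit of the argument in \cite{A5}.
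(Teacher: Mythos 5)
Your proof is correct and is exactly the argument the paper has in mind: the paper only sketches this step ("by the definition of $\varUpsilon$ and the Dominated Convergence Theorem, we can proceed as in \cite{A5}"), and your computation — cancelling $t_{\e}^{2}$, changing variables $z=\frac{\e x-y}{\e}$, applying dominated convergence with the bounds $|\varUpsilon|\leq\rho$ and $0\leq\psi\leq1$, and upgrading to uniformity via compactness of $M$ — is the standard way to fill it in. The only details worth keeping explicit are the ones you already noted: $\varUpsilon(y)=y$ because $M\subset M_{\delta}\subset B_{\rho}$, and the denominator stays bounded away from zero since $(w_{1},w_{2})\neq(0,0)$.
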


\noindent
The next result will be fundamental to implement the barycenter machinery. Moreover, it allows us to prove that the solutions of the modified problem \eqref{P''} are solutions of the original problem \eqref{P'}. 
\begin{lemma}\label{lem3.1}
Let $\e_{n}\rightarrow 0^{+}$ and $\{(u_{n}, v_{n})\}\subset \N_{\e_{n}}$ be such that $\J_{\e_{n}}(u_{n}, v_{n})\rightarrow C^{*}$. Then there exists $\{\tilde{y}_{n}\}\subset \R^{N}$ such that the translated sequence 
\begin{equation*}
(\tilde{u}_{n}(x), \tilde{v}_{n}(x)):=(u_{n}(x+ \tilde{y}_{n}), v_{n}(x+\tilde{y}_{n}))
\end{equation*}
has a subsequence which converges in $\X_{0}$. Moreover, up to a subsequence, $\{y_{n}\}:=\{\e_{n}\tilde{y}_{n}\}$ is such that $y_{n}\rightarrow y\in M$. 
\end{lemma}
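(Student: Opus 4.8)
The plan is to adapt the concentration-compactness scheme of \cite{Alves, A5}, together with the nonlocal refinements of \cite{HZ}, to the present critical system. \emph{Step 1 (boundedness).} Since $(u_{n},v_{n})\in\N_{\e_{n}}$ and $\J_{\e_{n}}(u_{n},v_{n})\to C^{*}$, the same computation used in the first part of the proof of Lemma \ref{lemma3.2-1} — combining $\langle\J_{\e_{n}}'(u_{n},v_{n}),(u_{n},v_{n})\rangle=0$ with \eqref{2.4A}--\eqref{2.6A} and taking $k$ large — yields $\sup_{n}\|(u_{n},v_{n})\|_{\e_{n}}<\infty$, hence, by $(H3)$, a bound in $H^{s}(\R^{N})\times H^{s}(\R^{N})$. \emph{Step 2 (non-vanishing).} I would show that there are $R,\gamma>0$ and $\{\tilde y_{n}\}\subset\R^{N}$ with $\liminf_{n}\int_{B_{R}(\tilde y_{n})}(u_{n}^{2}+v_{n}^{2})\,dx\ge\gamma$: otherwise Lemma \ref{lionslemma} gives $u_{n},v_{n}\to0$ in $L^{r}(\R^{N})$ for $r\in(2,2^{*}_{s})$, so $\int Q(u_{n},v_{n})\,dx\to0$ by $(Q2)$, and then the energy and Nehari identities, together with the definition of $\widetilde{S}_{K}$, the estimates \eqref{2.5A}--\eqref{2.6A}, and the strict gap $C^{*}=C(x_{0})<\frac{s}{N}\widetilde{S}_{K}^{N/2s}$ from \eqref{FFc}, would be contradictory, just as in Lemma \ref{lem4.2}. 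Setting $(\tilde u_{n},\tilde v_{n}):=(u_{n}(\cdot+\tilde y_{n}),v_{n}(\cdot+\tilde y_{n}))$ and $y_{n}:=\e_{n}\tilde y_{n}$, up to a subsequence $(\tilde u_{n},\tilde v_{n})\rightharpoonup(\tilde u,\tilde v)$ in $\X_{0}$ and in $L^{q}_{loc}(\R^{N})$ for $q\in[2,2^{*}_{s})$, with $(\tilde u,\tilde v)\neq(0,0)$, and $(\tilde u_{n},\tilde v_{n})$ solves the translated system with potentials $V(\e_{n}\cdot+y_{n})$, $W(\e_{n}\cdot+y_{n})$ and nonlinearity $H(\e_{n}\cdot+y_{n},\cdot,\cdot)$.

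\emph{Step 3 ($\{y_{n}\}$ is bounded and $y_{n}\to y_{0}\in\overline{\Lambda}$).} If $\{y_{n}\}$ were unbounded, then, since $\Lambda$ is bounded, along a subsequence $\mathrm{dist}(y_{n},\Lambda)\to\infty$, so for each fixed $R>0$ and $n$ large one has $\e_{n}x+y_{n}\notin\Lambda$ for $x\in B_{R}$, hence $H(\e_{n}x+y_{n},\cdot,\cdot)=\hat{Q}$ there. Since for $a$ small $\hat{Q}_{u},\hat{Q}_{v}$ grow at most linearly, one can pass to the limit in the localized weak formulation and deduce that $(\tilde u,\tilde v)$ weakly solves an autonomous penalized system; testing it with $(\tilde u,\tilde v)$ and using \eqref{2.6A} with $k>1$ forces $(\tilde u,\tilde v)=(0,0)$, a contradiction. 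Thus $y_{n}\to y_{0}$ along a subsequence, and $V(\e_{n}x+y_{n})\to V(y_{0})$, $W(\e_{n}x+y_{n})\to W(y_{0})$ locally uniformly by continuity of $V,W$; the same penalization argument excludes $y_{0}\notin\overline{\Lambda}$, so $y_{0}\in\overline{\Lambda}$.

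\emph{Step 4 (strong convergence and $y_{0}\in M$).} I would then upgrade $(\tilde u_{n},\tilde v_{n})\rightharpoonup(\tilde u,\tilde v)$ to strong convergence in $\X_{0}$, essentially re-running the proof of Lemma \ref{lemma3.2-1} for the translated sequence: the variant of the concentration-compactness lemma (Lemma \ref{CCL}) excludes concentration of $K(\tilde u_{n},\tilde v_{n})$ inside $\Lambda$ — an atom would carry energy $\ge\frac{s}{N}\widetilde{S}_{K}^{N/2s}>C^{*}$, against $\J_{\e_{n}}(u_{n},v_{n})\to C^{*}$ — while the tail estimate \eqref{3.3-1}, together with the sublinear behaviour of $\hat{Q}$ at infinity, rules out loss of mass near infinity. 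Passing to the limit, $(\tilde u,\tilde v)$ solves the autonomous system \eqref{P0} with $\xi=y_{0}$ (the case $y_{0}\in\partial\Lambda$ being discarded a posteriori, since by $(H1)$--$(H2)$ and the monotonicity of $\xi\mapsto C(\xi)$ it would force $\liminf_{n}\J_{\e_{n}}(u_{n},v_{n})>C^{*}$). Hence $(\tilde u,\tilde v)\in\N_{y_{0}}$ and, by strong convergence, $C^{*}=\lim_{n}\J_{\e_{n}}(u_{n},v_{n})=\J_{y_{0}}(\tilde u,\tilde v)\ge m_{y_{0}}=C(y_{0})\ge\inf_{\xi\in\R^{N}}C(\xi)=C^{*}$, the last equality coming from the monotonicity of $C$ in the potentials guaranteed by $(H3)$. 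Therefore $C(y_{0})=C^{*}$, i.e. $y_{0}\in M$, and $M\subset\Lambda$ by Lemma \ref{C00}; this yields the claim, the subsequence $\{y_{n}\}$ converging to $y_{0}\in M$.

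\emph{Main obstacle.} The delicate point is Step 4: because $K$ has critical growth, weak convergence does not preserve the relevant $L^{2^{*}_{s}}$-energy, so one must rule out the formation of bubbles, and this is precisely where the strict gap $C^{*}<\frac{s}{N}\widetilde{S}_{K}^{N/2s}$ and Lemma \ref{CCL} are indispensable. The nonlocality of $(-\Delta)^{s}$ compounds this: localizing the equation against cut-offs produces the double-integral commutator terms which must be absorbed through estimates of the type \eqref{Pa1}--\eqref{stima}. A further, minor, subtlety is controlling the penalized nonlinearity $H$ near $\partial\Lambda$, handled via $(H1)$--$(H2)$ and the continuity and monotonicity of $\xi\mapsto C(\xi)$ (Lemmas \ref{C0} and \ref{C00}).
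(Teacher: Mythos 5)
Your Steps 1--2 match the paper: boundedness follows from the Nehari identity together with \eqref{2.4A}--\eqref{2.6A}, and non-vanishing (hence a translated weak limit $(\tilde u,\tilde v)\neq(0,0)$) is obtained exactly as you describe. The divergence, and the problem, is in Steps 3--4. The lemma assumes only $(u_{n},v_{n})\in\N_{\e_{n}}$ and $\J_{\e_{n}}(u_{n},v_{n})\rightarrow C^{*}$; it does \emph{not} assume $\J_{\e_{n}}'(u_{n},v_{n})\rightarrow 0$. Everything in your Step 4 (and part of Step 3) rests on being able to test the Euler--Lagrange equation against cut-offs: you need $\langle \J_{\e_{n}}'(u_{n},v_{n}),(\psi_{\rho}u_{n},\psi_{\rho}v_{n})\rangle=o_{n}(1)$ to derive the reverse inequality $\nu_{i}\geq\mu_{i}+\sigma_{i}$ that excludes atoms in Lemma \ref{CCL}, and you need $\langle \J_{\e_{n}}'(u_{n},v_{n}),(\phi,\psi)\rangle\rightarrow 0$ for arbitrary test functions to identify $(\tilde u,\tilde v)$ as a solution of the autonomous (or penalized autonomous) system. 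From the stated hypotheses you only control $\langle \J_{\e_{n}}'(u_{n},v_{n}),(u_{n},v_{n})\rangle=0$, so the concentration-compactness re-run of Lemma \ref{lemma3.2-1} cannot be launched. This is a genuine gap, not a technicality: the lemma is applied to general elements of $\widetilde{\N}_{\e}$, for which no derivative information is available.

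The paper avoids this entirely by a projection argument: it picks $t_{n}>0$ with $(t_{n}\tilde u_{n},t_{n}\tilde v_{n})\in\N_{x_{0}}$ and squeezes
$C^{*}\leq \J_{x_{0}}(t_{n}\tilde u_{n},t_{n}\tilde v_{n})\leq \J_{\e_{n}}(t_{n}u_{n},t_{n}v_{n})\leq \J_{\e_{n}}(u_{n},v_{n})=C^{*}+o_{n}(1)$,
using $H\leq Q+\frac{1}{2^{*}_{s}}K$ and $(H3)$, together with the fact that $t_{n}(u_{n},v_{n})$ maximizes $t\mapsto\J_{\e_{n}}(tu_{n},tv_{n})$. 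This produces a minimizing sequence on the \emph{autonomous} Nehari manifold $\N_{x_{0}}$ with nonzero weak limit, and strong convergence then follows from the Fatou argument already carried out in Theorem \ref{prop4.3} --- no PS condition for $\J_{\e_{n}}$ and no concentration-compactness lemma are needed at this stage (the energy gap $C^{*}<\frac{s}{N}\widetilde S_{K}^{N/2s}$ enters only through Theorem \ref{prop4.3} itself). The boundedness of $\{y_{n}\}$ and the identification $C(y)=C^{*}$ are then obtained from the Nehari identity, \eqref{2.6A}, the already-established strong convergence (to control tails), and Lemmas \ref{C0}--\ref{C00}, again without localizing the equation. You should restructure Steps 3--4 along these lines, or else add the hypothesis $\J'_{\e_{n}}(u_{n},v_{n})\rightarrow 0$, which the statement does not grant you.
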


\begin{proof}
Since $\langle \J'_{\e_{n}}(u_{n}, v_{n}), (u_{n}, v_{n}) \rangle=0$ and $\J_{\e_{n}}(u_{n}, v_{n})\rightarrow C^{*}$, we can see that $\{(u_{n}, v_{n})\}$ is bounded. 
We note that $\|(u_{n}, v_{n})\|_{\e_{n}} \not\rightarrow 0$ since $C^{*}>0$. Thus, arguing as in \cite{ASy}, there exist a sequence $\{\tilde{y}_{n}\}\subset \R^{N}$ and constants $R, \gamma>0$ such that
\begin{equation*}
\liminf_{n\rightarrow \infty}\int_{B_{R}(y_{n})} (|u_{n}|^{2}+|v_{n}|^{2}) dx\geq \gamma,
\end{equation*}
which implies that
\begin{equation*}
(\tilde{u}_{n}, \tilde{v}_{n})\rightharpoonup (\tilde{u}, \tilde{v}) \mbox{ weakly in } \X_{0},  
\end{equation*}
where $(\tilde{u}_{n}(x), \tilde{v}_{n}(x)):=(u_{n}(x+ \tilde{y}_{n}), v_{n}(x+\tilde{y}_{n}))$ and $(\tilde{u}, \tilde{v})\neq (0,0)$. \\
Let $\{t_{n}\}\subset (0, +\infty)$ be such that $(\hat{u}_{n}, \hat{v}_{n}):=(t_{n}\tilde{u}_{n}, t_{n}\tilde{v}_{n})\in \N_{x_{0}}$, and set $y_{n}:=\e_{n}\tilde{y}_{n}$.  \\
From the definition of $H$ and $(H3)$, we can see that
\begin{align*}
C^{*}&\leq \J_{x_{0}}(\hat{u}_{n}, \hat{v}_{n})\\
&= \frac{t_{n}^{2}}{2} \|(u_{n}, v_{n})\|^{2}_{x_{0}} - \int_{\R^{N}} Q(t_{n} u_{n}, t_{n} v_{n})\, dx-\frac{1}{2^{*}_{s}} \int_{\R^{N}} K(t_{n} u_{n}, t_{n} v_{n})\, dx \\
&\leq \frac{t_{n}^{2}}{2} \|(u_{n}, v_{n})\|^{2}_{\e_{n}} - \int_{\R^{N}} H(\e x, t_{n} u_{n}, t_{n} v_{n})\, dx \\
&=\J_{\e_{n}}(t_{n}u_{n}, t_{n}v_{n}) \leq \J_{\e_{n}}(u_{n}, v_{n})= C^{*}+ o_{n}(1),
\end{align*}
which gives $\J_{x_{0}}(\hat{u}_{n}, \hat{v}_{n})\rightarrow C^{*}$. \\
Now, the sequence $\{t_{n}\}$ is bounded since $\{(\tilde{u}_{n}, \tilde{v}_{n})\}$ and $\{(\hat{u}_{n}, \hat{v}_{n})\}$ are bounded and $(\tilde{u}_{n}, \tilde{v}_{n})\not \rightarrow 0$. Therefore, up to a subsequence, $t_{n}\rightarrow t_{0}\geq 0$. Indeed $t_{0}>0$. Otherwise, if $t_{0}=0$, from the boundedness of $\{(\tilde{u}_{n}, \tilde{v}_{n})\}$, we get $(\hat{u}_{n}, \hat{v}_{n})= t_{n}(\tilde{u}_{n}, \tilde{v}_{n}) \rightarrow (0,0)$, that is $\J_{x_{0}}(\hat{u}_{n}, \hat{v}_{n})\rightarrow 0$ in contrast with the fact $C^{*}>0$. Thus $t_{0}>0$, and up to a subsequence, we have $(\hat{u}_{n}, \hat{v}_{n})\rightharpoonup t_{0}(\tilde{u}, \tilde{v})= (\hat{u}, \hat{v})\neq 0$ weakly in $\X_{0}$. 
Hence, it holds
\begin{equation*}
\J_{x_{0}}(\hat{u}_{n}, \hat{v}_{n})\rightarrow C^{*} \quad \mbox{ and } \quad (\hat{u}_{n}, \hat{v}_{n})\rightharpoonup (\hat{u}, \hat{v}) \mbox{ weakly in } \X_{0}.
\end{equation*}
From Theorem \ref{prop4.3}, we deduce that $(\hat{u}_{n}, \hat{v}_{n})\rightarrow (\hat{u}, \hat{v})$ in $\X_{0}$, that is $(\tilde{u}_{n}, \tilde{v}_{n})\rightarrow (\tilde{u}, \tilde{v})$ in $\X_{0}$. \\
Now, we show that $\{y_{n}\}$ has a subsequence such that $y_{n}\rightarrow y\in M$. 
We argue by contradiction, and we assume that, up to a subsequence, $|y_{n}|\rightarrow +\infty$. \\
Since $(u_{n}, v_{n})\in \N_{\e_{n}}$, we get
\begin{align*}
&\int_{\R^{N}} |(-\Delta)^{\frac{s}{2}}\tilde{u}_{n}|^{2}+|(-\Delta)^{\frac{s}{2}}\tilde{v}_{n}|^{2}+V(\e_{n}x+y_{n})|\tilde{u}_{n}|^{2}+ W(\e_{n}x+y_{n})|\tilde{v}_{n}|^{2} dx \\
&=\int_{\R^{N}} \tilde{u}_{n}H_{u}(\e_{n} x+y_{n},\tilde{u}_{n} ,\tilde{v}_{n})+\tilde{v}_{n}H_{v}(\e_{n} x+y_{n},\tilde{u}_{n} ,\tilde{v}_{n})\, dx.
\end{align*}
Take $R>0$ such that $\Lambda \subset B_{R}(0)$. Since we may assume that $|y_{n}|>2R$, for any $x\in B_{R/\e_{n}}(0)$ we get $|\e_{n} x+y_{n}|\geq |y_{n}|-|\e_{n} x|>R$.
Then, recalling the definition of $H$ and by using \eqref{2.6A}, the strong convergence of $(\tilde{u}_{n}, \tilde{v}_{n})$ and $|\R^{N}\setminus B_{R/\e_{n}}(0)|\rightarrow 0$ as $n\rightarrow \infty$, it follows that
\begin{align*}
&\int_{\R^{N}} \tilde{u}_{n}H_{u}(\e_{n} x+y_{n},\tilde{u}_{n} ,\tilde{v}_{n})+\tilde{v}_{n}H_{v}(\e_{n} x+y_{n},\tilde{u}_{n} ,\tilde{v}_{n})\, dx \\
&\leq \frac{1}{2} \int_{B_{R/\e_{n}}(0)} V(\e_{n}x+y_{n})|\tilde{u}_{n}|^{2}+ W(\e_{n}x+y_{n})|\tilde{v}_{n}|^{2} dx \\
&+\int_{\R^{N}\setminus B_{R/\e_{n}}(0)} \tilde{u}_{n}H_{u}(\e_{n} x+y_{n},\tilde{u}_{n} ,\tilde{v}_{n})+\tilde{v}_{n}H_{v}(\e_{n} x+y_{n},\tilde{u}_{n} ,\tilde{v}_{n})\, dx \\
&=\frac{1}{2} \int_{B_{R/\e_{n}}(0)} V(\e_{n}x+y_{n})|\tilde{u}_{n}|^{2}+ W(\e_{n}x+y_{n})|\tilde{v}_{n}|^{2} dx+o_{n}(1).
\end{align*}
By using $(H3)$, we obtain
$$
\left(1-\frac{1}{2}\right)\|(\tilde{u}_{n}, \tilde{v}_{n})\|^{2}_{x_{0}}=o_{n}(1),
$$
which gives a contradiction because of $(\hat{u}_{n}, \hat{v}_{n})\rightarrow (\hat{u}, \hat{v})\neq 0$.
Thus $\{y_{n}\}$ is bounded and, up to a subsequence, we may assume that $y_{n}\rightarrow y$. If $y\notin \overline{\Lambda}$, then there exists $r>0$ such that $y_{n}\in B_{r/2}(u)\subset \R^{N}\setminus \overline{\Lambda}$ for any $n$ large enough. Reasoning as before, we get a contradiction. Hence $y\in \overline{\Lambda}$. \\
Now, we prove that $y\in M$. Taking into account Lemma \ref{C00}, it is enough to prove that $C(y)=C^{*}$. Assume by contradiction that $C(y)<C^{*}$.
Since $(\hat{u}_{n}, \hat{v}_{n})\rightarrow (\hat{u}, \hat{v})$ strongly in $\X_{0}$, by Fatou Lemma we have
\begin{align*}
C^{*}<C(y)&= \J_{y}(\hat{u}, \hat{v}) \\
&= \liminf_{n\rightarrow \infty} \Bigl\{ \frac{1}{2}\left(\int_{\R^{N}} |(-\Delta)^{\frac{s}{2}} \hat{u}_{n}|^{2}+ |(-\Delta)^{\frac{s}{2}} \hat{v}_{n}|^{2} dx\right) \\
&- \int_{\R^{N}} \left(Q(\hat{u}_{n}, \hat{v}_{n})+\frac{1}{2^{*}_{s}} K(\hat{u}_{n}, \hat{v}_{n})\right)\, dx \nonumber\\
&+ \frac{1}{2} \int_{\R^{N}} (V(\e_{n}x + y_{n})|\hat{u}_{n}|^{2} + W(\e_{n}x+y_{n})|\hat{v}_{n}|^{2})\, dx\Bigr\} \nonumber \\
&\leq \liminf_{n\rightarrow \infty} \J_{\e_{n}}(t_{n}u_{n}, t_{n}v_{n}) \leq \liminf_{n\rightarrow \infty} \J_{\e_{n}} (u_{n}, v_{n})=C^{*}
\end{align*}
which is impossible. Then $C(y)=C^{*}$ and this ends the proof of lemma. 
\end{proof}

\noindent
Now, we consider a subset $\widetilde{\N}_{\e}$ of $\N_{\e}$ by taking a function $h:\R_{+}\rightarrow \R_{+}$ such that $h(\e)\rightarrow 0$ as $\e \rightarrow 0$, and setting
$$
\widetilde{\N}_{\e}=\{(u, v)\in \N_{\e}: \J_{\e}(u)\leq C^{*}+h(\e)\}.
$$
Fixed $y\in M$, we conclude from Lemma \ref{lemma3.4FS} that $h(\e)=|\J_{\e}(\Phi_{\e}(y))-C^{*}|\rightarrow 0$ as $\e \rightarrow 0$. Hence $\Phi_{\e}(y)\in \widetilde{\N}_{\e}$, and $\widetilde{\N}_{\e}\neq \emptyset$ for any $\e>0$. Moreover, as in \cite{A5}, we can prove the following lemma.
\begin{lemma}\label{lemma3.7FS}
$$
\lim_{\e \rightarrow 0} \sup_{(u, v)\in \widetilde{\mathcal{N}}_{\e}} dist(\beta_{\e}(u, v), M_{\delta})=0.
$$
\end{lemma}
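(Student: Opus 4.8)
The plan is to argue by contradiction. Suppose the statement is false; then there exist $\delta_{0}>0$, a sequence $\e_{n}\to 0^{+}$ and $(u_{n},v_{n})\in\widetilde{\N}_{\e_{n}}$ with
\[
dist(\beta_{\e_{n}}(u_{n},v_{n}),M_{\delta})\geq\delta_{0}\qquad\text{for every }n.
\]
The crucial point is to check that $\J_{\e_{n}}(u_{n},v_{n})\to C^{*}$, since this is exactly the hypothesis of Lemma \ref{lem3.1}. One inequality is immediate: $(u_{n},v_{n})\in\widetilde{\N}_{\e_{n}}$ together with $h(\e_{n})\to 0$ gives $\limsup_{n}\J_{\e_{n}}(u_{n},v_{n})\leq C^{*}$. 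For the reverse inequality, since $\J_{\e_{n}}(u_{n},v_{n})\geq\inf_{\N_{\e_{n}}}\J_{\e_{n}}$, it suffices to prove $\liminf_{\e\to 0}\inf_{\N_{\e}}\J_{\e}\geq C^{*}$ (Lemma \ref{lemma3.4FS} already gives $\limsup_{\e\to0}\inf_{\N_{\e}}\J_{\e}\leq C^{*}$, because $\widetilde{\N}_{\e}\neq\emptyset$).

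To obtain $\liminf_{\e\to0}\inf_{\N_{\e}}\J_{\e}\geq C^{*}$ I would argue as in \cite{A5}: take $(w_{n},z_{n})\in\N_{\e_{n}}$ with $\J_{\e_{n}}(w_{n},z_{n})\to\liminf_{\e\to0}\inf_{\N_{\e}}\J_{\e}=:c_{0}$, which is bounded (as in the boundedness argument in the proof of Lemma \ref{lemma3.2-1}) and, by Lemma \ref{lemma2.2}, bounded away from $0$; moreover $c_{0}\leq C^{*}<\frac{s}{N}\widetilde{S}_{K}^{\frac{N}{2s}}$ by \eqref{FFc} and $C^{*}=C(x_{0})=m_{x_{0}}$, so the sequence sits strictly below the first critical level and the concentration analysis used in Lemma \ref{lem4.2} and Lemma \ref{lemma3.2-1} excludes both vanishing and bubbling, yielding $\{\tilde{y}_{n}\}\subset\R^{N}$ with $(w_{n}(\cdot+\tilde{y}_{n}),z_{n}(\cdot+\tilde{y}_{n}))\rightharpoonup(\tilde{w},\tilde{z})\neq(0,0)$. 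Writing $y_{n}:=\e_{n}\tilde{y}_{n}$, the penalization estimates \eqref{2.5A}--\eqref{2.6A} exclude $|y_{n}|\to\infty$ and $y_{n}\to y\notin\overline{\Lambda}$, exactly as in the proof of Lemma \ref{lem3.1}, so $y_{n}\to y\in\overline{\Lambda}$; then, using that $H=Q+\frac{1}{2^{*}_{s}}K$ on $\Lambda$, the continuity of $V$ and $W$, a Nehari rescaling onto $\N_{y}$, and Fatou's lemma, one gets $c_{0}\geq C(y)$, and $C(y)\geq C(x_{0})=C^{*}$ by $(H3)$. Hence $\inf_{\N_{\e}}\J_{\e}\to C^{*}$ and, in particular, $\J_{\e_{n}}(u_{n},v_{n})\to C^{*}$.

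Now Lemma \ref{lem3.1} applies: along a subsequence there is $\{\tilde{y}_{n}\}\subset\R^{N}$ such that $(\tilde{u}_{n},\tilde{v}_{n}):=(u_{n}(\cdot+\tilde{y}_{n}),v_{n}(\cdot+\tilde{y}_{n}))\to(\tilde{u},\tilde{v})$ strongly in $\X_{0}$, with $(\tilde{u},\tilde{v})\neq(0,0)$, and $y_{n}:=\e_{n}\tilde{y}_{n}\to y\in M$. Performing the change of variable $x\mapsto z+\tilde{y}_{n}$ in the barycenter gives
\[
\beta_{\e_{n}}(u_{n},v_{n})=\frac{\int_{\R^{N}}\varUpsilon(\e_{n}z+y_{n})\,(\tilde{u}_{n}^{2}(z)+\tilde{v}_{n}^{2}(z))\,dz}{\int_{\R^{N}}(\tilde{u}_{n}^{2}(z)+\tilde{v}_{n}^{2}(z))\,dz}.
\]
Since $\varUpsilon$ is continuous with $|\varUpsilon|\leq\rho$, since $\e_{n}z+y_{n}\to y$ pointwise in $z$, and since $\tilde{u}_{n}^{2}+\tilde{v}_{n}^{2}\to\tilde{u}^{2}+\tilde{v}^{2}$ in $L^{1}(\R^{N})$ (the strong $\X_{0}$ convergence controls the $L^{2}$ norms, and $0<\int_{\R^{N}}(\tilde{u}^{2}+\tilde{v}^{2})\,dz<\infty$), the dominated convergence theorem yields $\beta_{\e_{n}}(u_{n},v_{n})\to\varUpsilon(y)$; and $\varUpsilon(y)=y$ because $y\in M\subset M_{\delta}\subset B_{\rho}$. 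Therefore $dist(\beta_{\e_{n}}(u_{n},v_{n}),M_{\delta})\to 0$, contradicting the choice of the sequence, and the lemma follows.

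I expect the main obstacle to be the lower bound $\liminf_{\e\to0}\inf_{\N_{\e}}\J_{\e}\geq C^{*}$: in the critical regime a near-minimizing sequence on $\N_{\e}$ could a priori lose mass through a bubble, and it is precisely the strict gap $C^{*}<\frac{s}{N}\widetilde{S}_{K}^{\frac{N}{2s}}$ from \eqref{FFc}, combined with the penalization estimates \eqref{2.4A}--\eqref{2.6A} and the concentration-compactness analysis of the previous sections, that rules this out; once $\J_{\e_{n}}(u_{n},v_{n})\to C^{*}$ is available, the remaining argument is the routine change-of-variables and dominated-convergence computation above.
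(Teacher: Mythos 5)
Your argument is correct and is essentially the proof the paper has in mind (the paper omits it, deferring to \cite{A5}): argue by contradiction, reduce everything to showing $\J_{\e_{n}}(u_{n},v_{n})\rightarrow C^{*}$, invoke Lemma \ref{lem3.1} to get $y_{n}=\e_{n}\tilde{y}_{n}\rightarrow y\in M$ together with strong convergence of the translates, and conclude by the change of variables and dominated convergence in the barycenter. The only point where you take a longer road is the lower bound $\liminf_{\e\rightarrow 0}\inf_{\N_{\e}}\J_{\e}\geq C^{*}$, which you establish by a full concentration--compactness analysis of near-minimizing sequences; the reference argument instead gets $C^{*}\leq\inf_{\N_{\e}}\J_{\e}$ for \emph{every} $\e$ in one line, by projecting $(u,v)\in\N_{\e}$ onto $\N_{x_{0}}$ and using the comparison $\J_{x_{0}}(tu,tv)\leq\J_{\e}(tu,tv)$ coming from $(H3)$ and $H\leq Q+\frac{1}{2^{*}_{s}}K$ (the same comparison the paper already uses inside the proof of Lemma \ref{lem3.1}). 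Your route is heavier but self-contained and avoids leaning on that pointwise comparison outside $\Lambda_{\e}$, at the price of re-running the compactness machinery of Lemmas \ref{lem4.2} and \ref{lemma3.2-1}; either way the conclusion stands.
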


\noindent
At the end of this section, we give the proof of the following multiplicity result to \eqref{P''}.
\begin{theorem}\label{thm3.1AFF}
For any $\delta>0$ satisfying $M_{\delta}\subset M$, there exists $\e_{\delta}>0$ such that for any $\e\in (0, \e_{\delta})$, problem \eqref{P''} has at least $cat_{M_{\delta}}(M)$ positive solutions.
\end{theorem}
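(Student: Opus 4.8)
The plan is to combine Ljusternik--Schnirelmann theory with the Benci--Cerami linking between the topology of $M$ and that of the sublevel set $\widetilde{\N}_{\e}$, in the spirit of \cite{BC}. Fix $\delta>0$ as in the statement and let $h(\e)\to 0^{+}$ be the function provided by Lemma \ref{lemma3.4FS}, so that $\Phi_{\e}(M)\subset\widetilde{\N}_{\e}$ for all small $\e$; by Lemma \ref{lemma3.7FS} we also have $\beta_{\e}(\widetilde{\N}_{\e})\subset M_{\delta}$ for all small $\e$. Hence, for every sufficiently small $\e$, we dispose of continuous maps $\Phi_{\e}:M\to\widetilde{\N}_{\e}$ and $\beta_{\e}:\widetilde{\N}_{\e}\to M_{\delta}$. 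The first step is to deduce from these that $cat_{\widetilde{\N}_{\e}}(\widetilde{\N}_{\e})\geq cat_{M_{\delta}}(M)$. Indeed, by Lemma \ref{lemma3.5FS} one has $\beta_{\e}(\Phi_{\e}(y))\to y$ uniformly for $y\in M$, so for $\e$ small $|\beta_{\e}(\Phi_{\e}(y))-y|<\delta/2$ for every $y\in M$; then $\mathcal{G}(t,y):=(1-t)\,\beta_{\e}(\Phi_{\e}(y))+t\,y$ is a homotopy in $M_{\delta}$ (every value of $\mathcal{G}$ lies within $\delta/2$ of $y\in M$, hence in $M_{\delta}$) joining $\beta_{\e}\circ\Phi_{\e}$ to the inclusion $\iota:M\hookrightarrow M_{\delta}$. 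The usual category argument (if $\widetilde{\N}_{\e}=A_{1}\cup\cdots\cup A_{k}$ with each $A_{j}$ closed and contractible in $\widetilde{\N}_{\e}$, then $\Phi_{\e}^{-1}(A_{j})$ is closed and, composing the contraction of $A_{j}$ with $\beta_{\e}$ and with $\mathcal{G}$, contractible in $M_{\delta}$) then gives $cat_{M_{\delta}}(M)\leq k$.

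Next I would apply the abstract Ljusternik--Schnirelmann multiplicity principle to $\J_{\e}$ restricted to $\N_{\e}$. The Nehari set $\N_{\e}$ is a complete $C^{1}$ manifold ($0$ being a regular value of $\phi_{\e}$ by Lemma \ref{lemma2.3}), on which $\J_{\e}$ is bounded below, indeed bounded away from $0$, by Lemma \ref{lemma2.2}. Moreover, since $C^{*}=m_{x_{0}}<\frac{s}{N}\widetilde{S}_{K}^{\frac{N}{2s}}$ by \eqref{FFc} (applied with $\xi=x_{0}$) and $h(\e)\to 0$, for $\e$ small the whole energy range up to $C^{*}+h(\e)$ lies strictly below $\frac{s}{N}\widetilde{S}_{K}^{\frac{N}{2s}}$, so Proposition \ref{prop1} (equivalently Lemma \ref{lemma3.2-1}) supplies the Palais--Smale condition on $\widetilde{\N}_{\e}=\{(u,v)\in\N_{\e}:\J_{\e}(u,v)\leq C^{*}+h(\e)\}$. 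The standard deformation argument then yields that $\J_{\e}|_{\N_{\e}}$ has at least $cat_{\widetilde{\N}_{\e}}(\widetilde{\N}_{\e})$ critical points in $\widetilde{\N}_{\e}$, hence at least $cat_{M_{\delta}}(M)$ by the first step. By Corollary \ref{cor3.5} these are critical points of $\J_{\e}$ on the whole $\X_{\e}$, i.e. weak solutions of \eqref{P''}. Their positivity follows exactly as in Remark \ref{remark}: testing with $(u^{-},v^{-})$ gives $u,v\geq 0$; the membership $(u,v)\in\N_{\e}$ together with $(Q4)$ and the fact that $H_{u}=H_{v}=0$ on the degenerate half--planes forces both components to be nontrivial; and a Moser iteration together with the Harnack inequality give $u,v>0$ in $\R^{N}$.

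I do not expect a genuinely new obstacle in this final step: the substantial work (the compactness Lemma \ref{lemma3.2-1} with the threshold $\frac{s}{N}\widetilde{S}_{K}^{\frac{N}{2s}}$, the energy estimate \eqref{FFc}, and the barycenter estimates of Lemmas \ref{lemma3.4FS}--\ref{lemma3.7FS}) has already been done. What requires care is of a bookkeeping nature: selecting a single $\e_{\delta}>0$ for which \emph{all} the ``$\e$ small'' requirements hold simultaneously ($\widetilde{\N}_{\e}\neq\emptyset$, $\Phi_{\e}(M)\subset\widetilde{\N}_{\e}$, $\beta_{\e}(\widetilde{\N}_{\e})\subset M_{\delta}$, validity of the homotopy $\mathcal{G}$, and $C^{*}+h(\e)<\frac{s}{N}\widetilde{S}_{K}^{\frac{N}{2s}}$), and making sure the abstract category machinery applies to $\widetilde{\N}_{\e}$. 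For the latter it is convenient to transport the problem onto the unit sphere of $\X_{\e}$ through the usual homeomorphism $(u,v)\mapsto t_{u}(u,v)$ associated with the Nehari manifold, which turns $\widetilde{\N}_{\e}$ into a sublevel set on a smooth manifold without affecting the multiplicity count.
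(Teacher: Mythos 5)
Your proposal is correct and follows essentially the same route as the paper: the composition $M \stackrel{\Phi_{\e}}{\rightarrow} \widetilde{\N}_{\e} \stackrel{\beta_{\e}}{\rightarrow} M_{\delta}$ homotopic to the inclusion (via Lemmas \ref{lemma3.4FS}, \ref{lemma3.5FS}, \ref{lemma3.7FS}), the Benci--Cerami inequality $cat_{\widetilde{\N}_{\e}}(\widetilde{\N}_{\e})\geq cat_{M_{\delta}}(M)$, Ljusternik--Schnirelmann theory on $\N_{\e}$ with the Palais--Smale condition guaranteed below the threshold $\frac{s}{N}\widetilde{S}_{K}^{\frac{N}{2s}}$ by Proposition \ref{prop1}, and Corollary \ref{cor3.5} to pass to the unconstrained functional. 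You merely make explicit some details (the linear homotopy, the covering argument, the positivity) that the paper delegates to the references \cite{BC} and \cite{W}.
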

\begin{proof}
Fix $\delta>0$ such that $M_{\delta}\subset M$. By using Lemma \ref{lemma3.4FS}, Lemma \ref{lemma3.5FS} and Lemma \ref{lemma3.7FS}, we can find $\e_{\delta}>0$ such that for any $\e\in (0, \e_{\delta})$, the following diagram
$$
M \stackrel{\Phi_{\e}}{\rightarrow}  \widetilde{\N}_{\e} \stackrel{\beta_{\e}}{\rightarrow} M_{\delta}
$$
is well-defined and $\beta_{\e}\circ \Phi_{\e}$ is  homotopically equivalent to the inclusion map $\iota: M\rightarrow M_{\delta}$. 
Since $C^{*}=C(x_{0})<\frac{s}{N} \tilde{S}^{\frac{N}{2s}}_{K}$, we can use the definition of $\widetilde{\N}_{\e}$ and taking $\e_{\delta}$ sufficiently small, we may assume that $\J_{\e}$ verifies the Palais-Smale condition in $\widetilde{\N}_{\e}$ (see Proposition \ref{prop1}). 
By applying Ljusternik-Schnirelmann theory \cite{W}, we obtain at least $cat_{\widetilde{\N}_{\e}}(\widetilde{\N}_{\e})$ critical points $(u_{i}, v_{i}):=(u^{i}_{\e}, v^{i}_{\e})$ of $\J_{\e}$ restricted to $\N_{\e}$. From the arguments in \cite{BC}, we can see that $cat_{\widetilde{\N}_{\e}}(\widetilde{\N}_{\e})\geq cat_{M_{\delta}}(M)$. 
Then, Corollary \ref{cor3.5} implies that each $(u_{i}, v_{i})$ is a critical point of the unconstrained functional and as a consequence a solution of the problem \eqref{P''}.

\end{proof}

\section{Proof of Theorem \ref{thm1}}

In this  section we give the proof of Theorem \ref{thm1}. We follow the ideas developed in \cite{A5}.
\begin{proof}
Fix $\delta>0$ such that $M_\delta \subset \Lambda$. We aim to show that there exists $\tilde{\e}_{\delta}>0$ such that for any $\e \in (0, \tilde{\e}_{\delta})$ and any solution $u_{\e} \in \widetilde{\N}_{\e}$ of \eqref{P''}, it holds 
\begin{equation}\label{infty}
\|(u_{\e}, v_{\e})\|_{L^{\infty}(\R^{N}\setminus \Lambda_{\e})}<a. 
\end{equation}
We argue by contradiction, and we suppose that there exist $\e_{n}\rightarrow 0$, $(u_{\e_{n}}, v_{\e_{n}})\in \widetilde{\mathcal{N}}_{\e_{n}}$ such that $\J'_{\e_{n}}(u_{\e_{n}}, v_{\e_{n}})=0$ and $\|(u_{\e_{n}}, v_{\e_{n}})\|_{L^{\infty}(\R^{N}\setminus \Lambda_{\e_{n}})}\geq a$. 
Since $\J_{\e_{n}}(u_{\e_{n}}, v_{\e_{n}}) \leq C^{*} + h(\e_{n})$ and $h(\e_{n})\rightarrow 0$, we can proceed as in the  first part of the proof of Lemma \ref{lem3.1}, to deduce that $\J_{\e_{n}}(u_{\e_{n}}, v_{\e_{n}})\rightarrow C^{*}$.
From Lemma \ref{lem3.1}, it follows the existence of a sequence $\{\tilde{y}_{n}\}\subset \R^{N}$ such that $\e_{n}\tilde{y}_{n}\rightarrow y \in M$. \\
Now, we take $r>0$ such that $B_{2r}(y)\subset \Lambda$, so $B_{\frac{r}{\e_{n}}}(\frac{y}{\e_{n}})\subset \Lambda_{\e_{n}}$. Moreover, for any $z\in B_{\frac{r}{\e_{n}}}(\tilde{y}_{n})$ we can see
\begin{equation*}
\left|z - \frac{y}{\e_{n}}\right| \leq |z- \tilde{y}_{n}|+ \left|\tilde{y}_{n} - \frac{y}{\e_{n}}\right|<\frac{2r}{\e_{n}}\, \mbox{ for } n \mbox{ sufficiently large. }
\end{equation*}
Hence, for any $n$ big enough, we have 
$$
\R^{N}\setminus \Lambda_{\e_{n}}\subset \R^{N} \setminus B_{\frac{r}{\e_{n}}}(\tilde{y}_{n}).
$$
Let us denote by $(\tilde{u}_{n}(x), \tilde{v}_{n}(x))=(u_{\e_{n}}(x+ \tilde{y}_{n}), v_{\e_{n}}(x+ \tilde{y}_{n}))$ and we set $\tilde{z}_{n}=\tilde{u}_{n}+\tilde{v}_{n}$. Since $\tilde{z}_{n}\geq 0$ satisfies
$$
(-\Delta)^{s} z+\alpha z\leq C_{0}(z^{p-1}+ z^{2^{*}_{s}-1}) \mbox{ in } \R^{N},
$$
where $\alpha=\min\{V(x_{0}), W(x_{0})\}$ and for some positive constant $C_{0}$ given by $(Q2)$ and $(K3)$, 
we can use a Moser iteration argument (see \cite{Ambrosio, A3, DPMV, HZ}) and the fact that $\{(u_{\e_{n}}, v_{\e_{n}})\}$ is bounded in $\X_{\e_{n}}$, to see that $\tilde{z}_{n}\in L^{\infty}(\R^{N})$, and there exists a constant $K>0$ such that  
$$
\|\tilde{z}_{n}\|_{L^{\infty}(\R^{N})}\leq K \mbox{ for any } n\in \mathbb{N},
$$
and 
$\tilde{u}_{n}\rightarrow u$  and $\tilde{v}_{n}\rightarrow v$ in $L^{q}(\R^{N})$ for any $q\in (2, 2^{*}_{s})$, for some $(u, v)\in L^{q}(\R^{N})$ for any $q\in (2, 2^{*}_{s})$.\\
Moreover, by interpolation, we can see that
$$
H_{u}(\e_{n}x+\e_{n}\tilde{y}_{n}, \tilde{u}_{n}, \tilde{v}_{n})\rightarrow Q_{u}(u, v)+\frac{1}{2^{*}_{s}}K_{u}(u, v)
$$
and 
$$
H_{v}(\e_{n}x+\e_{n}\tilde{y}_{n}, \tilde{u}_{n}, \tilde{v}_{n})\rightarrow Q_{v}(u, v)+\frac{1}{2^{*}_{s}}K_{v}(u, v).
$$ 
in the sense of convergence in $L^{q}(\R^{N})$ for any $q\in (2, 2^{*}_{s})$.\\ 
Since $\tilde{z}_{n}$ is a solution to 
$$
(-\Delta)^{s} \tilde{z}_{n}+\tilde{z}_{n}=\xi_{n} \mbox{ in } \R^{N}
$$
where 
\begin{align*}
\xi_{n}&:=H_{u}(\e_{n}x+\e_{n}\tilde{y}_{n}, \tilde{u}_{n}, \tilde{v}_{n})+H_{v}(\e_{n}x+\e_{n}\tilde{y}_{n}, \tilde{u}_{n}, \tilde{v}_{n}) \\
&-V(\e_{n}x+\e_{n}\tilde{y}_{n}) \tilde{u}_{n}-W(\e_{n}x+\e_{n}\tilde{y}_{n})\tilde{v}_{n}+\tilde{z}_{n},
\end{align*}
and
$$
\xi_{n}\rightarrow Q_{u}(u, v)+Q_{v}(u, v)+\frac{1}{2^{*}_{s}}(K_{u}(u, v)+K_{v}(u, v))-V(y)u-W(y) v+(u+v) \mbox{ in } L^{q}(\R^{N})
$$
for any $q\in [2, 2^{*}_{s})$, there exists $K_{1}>0$ such that 
$$
\|\xi_{n}\|_{L^{\infty}(\R^{N})}\leq K_{1} \mbox{ for any } n\in \mathbb{N}.
$$
As a consequence, $\tilde{z}_{n}(x)=(\mathcal{K}*\xi_{n})(x)=\int_{\R^{N}} \mathcal{K}(x-t) \xi_{n}(t) \, dt$, where $\mathcal{K}$ is the Bessel kernel satisfying the following properties \cite{FQT}:
\begin{compactenum}[$(i)$]
\item $\mathcal{K}$ is positive, radially symmetric and smooth in $\R^{N}\setminus \{0\}$,
\item there is $C>0$ such that $\mathcal{K}(x)\leq \frac{C}{|x|^{N+2s}}$ for any $x\in \R^{N}\setminus \{0\}$,
\item $\mathcal{K}\in L^{q}(\R^{N})$ for any $q\in [1, \frac{N}{N-2s})$.
\end{compactenum}
Then, arguing as in Lemma $2.6$ in \cite{AM}, we can see that 
\begin{equation}\label{AM3}
\tilde{z}_{n}(x)\rightarrow 0 \mbox{ as } |x|\rightarrow \infty
\end{equation}
uniformly in $n\in \mathbb{N}$.\\
Hence, there exists $R>0$ such that 
$$
|(\tilde{u}_{n}(x), \tilde{v}_{n}(x))|<a \mbox{ for all } |x|\geq R, n\in \mathbb{N}.
$$
This together with the definition of $(\tilde{u}_{n}, \tilde{v}_{n})$, yields
$$
|(u_{\e_{n}}(x), v_{\e_{n}}(x))|<a \mbox{ for any } x\in \R^{N}\setminus B_{R}(\tilde{y}_{n}), n\in \mathbb{N}.
$$
 As a consequence, there exists $\nu \in \mathbb{N}$ such that for any $n\geq \nu$ and $\frac{r}{\e_{n}}>R$, it holds 
 $$
 \R^{N}\setminus \Lambda_{\e_{n}}\subset \R^{N} \setminus B_{\frac{r}{\e_{n}}}(\tilde{y}_{n})\subset \R^{N}\setminus B_{R}(\tilde{y}_{n}),
 $$
which gives $|(u_{\e_{n}}(x), v_{\e_{n}}(x))|<a$ for any $x\in \R^{N}\setminus \Lambda_{\e_{n}}$ and $n\geq \nu$, that is a contradiction. \\
Now, let $\bar{\e}_{\delta}$ given in Theorem \ref{thm3.1AFF} and take $\e_{\delta}= \min \{\tilde{\e}_{\delta}, \bar{\e}_{\delta}\}$. Fix $\e \in (0, \e_{\delta})$. By Theorem \ref{thm3.1AFF}, we know that problem \eqref{P''} admits $cat_{M_{\delta}}(M)$ nontrivial solutions $(u_{\e}, v_{\e})$. Due to the fact that $(u_{\e}, v_{\e})\in \widetilde{\mathcal{N}}_{\e}$ satisfies \eqref{infty}, from the definition of $H$ and $\hat{Q}$, it follows that $(u_{\e}, v_{\e})$ is a solution of \eqref{P'}. By using $(Q6)$ and the maximum principle for the fractional Laplacian \cite{CabSir}, we can infer that $u_{\e}, v_{\e}>0$ in $\R^{N}$.
We conclude the proof studying the behavior of the maximum points of $(u_{\e}, v_{\e})$.\\
Let $\e_{n}\rightarrow 0$ and take $\{(u_{\e_{n}}, v_{\e_{n}})\}\subset \X_{\e_{n}}$ be a sequence of solutions to \eqref{P''} as above.
From the definition of $H$ and the assumptions $(Q2)$ and $(K3)$,  we can find $\bar{a}\in (0, a)$ sufficiently small such that
\begin{align}\label{HZnew}
uH_{u}(\e_{n} x, u, v)+vH_{v}(\e_{n} x, u, v)&=uQ_{u}+vQ_{v}+\frac{1}{2^{*}_{s}}(uK_{u}+vK_{v}) \nonumber \\
&\leq \frac{\alpha}{2}(u^{2}+v^{2}) \mbox{ for any } x\in \R^{N}, |(u, v)|\leq \bar{a}.
\end{align}
Arguing as before, we can find $R>0$ such that 
\begin{equation}\label{HZnnew}
\|(u_{\e_{n}}, v_{\e_{n}})\|_{L^{\infty}(B_{R}^{c}(\tilde{y}_{n}))}< \bar{a}.
\end{equation}
Up to a subsequence, we may assume that 
\begin{equation}\label{TV1}
\|(u_{\e_{n}}, v_{\e_{n}})\|_{L^{\infty}(B_{R}(\tilde{y}_{n}))}\geq \bar{a}.
\end{equation}
Otherwise, we can deduce that $\|(u_{\e_{n}}, v_{\e_{n}})\|_{L^{\infty}(\R^{N})}< \bar{a}$, and by using the facts $\langle \J'_{\e_{n}}(u_{\e_{n}}, v_{\e_{n}}),(u_{\e_{n}}, v_{\e_{n}}) \rangle=0$ and \eqref{HZnew}, we obtain
\begin{align*}
\|(u_{\e_{n}}, v_{\e_{n}}) \|^{2}_{\e_{n}} &=\int_{\R^{N}} u_{\e_{n}} H_{u}(\e_{n} x, u_{\e_{n}}, v_{\e_{n}})+v_{\e_{n}} H_{v}(\e_{n} x, u_{\e_{n}}, v_{\e_{n}}) \, dx\\
&\leq \frac{\alpha}{2}\int_{\R^{N}} (u_{\e_{n}}^{2}+ v_{\e_{n}}^{2})\, dx,
\end{align*}
which gives $\| (u_{\e_{n}}, v_{\e_{n}})\|_{\e_{n}}= 0$, that is a contradiction. Then, \eqref{HZnnew} holds.\\
Now, we denote by $x_{n}$ and $\bar{x}_{n}$ the  maximum points of $u_{\e_{n}}$ and $v_{\e_{n}}$ respectively. From \eqref{HZnnew} and \eqref{TV1}, it follows that $x_{n}=\tilde{y}_{n}+p_{n}$ and $\bar{x}_{n}=\tilde{y}_{n}+q_{n}$ for some $p_{n}, q_{n}\in B_{R}(0)$. \\
Set $\hat{u}_{n}(x)=u_{\e_{n}}(x/\e_{n})$ and $\hat{v}_{n}(x)=v_{\e_{n}}(x/\e_{n})$.
Then $\hat{u}_{n}$ and $\hat{v}_{n}$ are solutions to \eqref{P} with maximum points $P_{n}:=\e_{n}\tilde{y}_{n}+\e_{n}p_{n}$ and $Q_{n}:=\e_{n}\tilde{y}_{n}+\e_{n}q_{n}$ respectively. Since $|p_{n}|, |q_{n}|<R$ for all $n\in \mathbb{N}$ and $\e_{n}\tilde{y}_{n}\rightarrow y\in M$, we can deduce that $P_{n}, Q_{n} \rightarrow y$, and by using Lemma \ref{C0}, we can see that
$$
\lim_{n\rightarrow \infty} C(P_{n})=\lim_{n\rightarrow \infty} C(Q_{n})=C(y)=C^{*}=C(x_{0}).
$$
Now, we study the decay  properties of $(\hat{u}_{n}, \hat{v}_{n})$ and we show that \eqref{DEuv} holds. 

Let $\tilde{z}_{n}(x)=\tilde{u}_{n}(x)+\tilde{v}_{n}(x)$.
In view of (\ref{AM3}), we know that $\tilde{z}_{n}\rightarrow 0$ as $|x|\rightarrow \infty$ uniformly in $n$.
On the other hand, taking into account $(Q2)$ and $(K3)$, we have
$$
|H_{u}|+|H_{v}|+\frac{1}{2^{*}_{s}} (|K_{u}|+|K_{v}|)= o(|(u, v)|) \mbox{ as } |(u, v)|\rightarrow 0.
$$
Therefore, by using $(H3)$, $\alpha=\min\{V(x_{0}), W(x_{0})\}$ and $\sqrt{x^{2}+y^{2}}\leq x+y$ for any $x, y\geq 0$, we can find $R_{1}>0$ sufficiently large such that
\begin{align}\label{HZ3}
&(-\Delta)^{s} \tilde{z}_{n}+\frac{\alpha}{2} \tilde{z}_{n}\nonumber \\
&=(-\Delta)^{s} \tilde{u}_{n}+(-\Delta)^{s} \tilde{v}_{n}+V \tilde{u}_{n}+W \tilde{v}_{n}-\left(V \tilde{u}_{n}+W \tilde{v}_{n}-\frac{\alpha}{2} \tilde{z}_{n} \right) \nonumber \\
&=H_{u}(\e_{n}x+\e_{n}\tilde{y}_{n}, \tilde{u}_{n}, \tilde{v}_{n})+H_{v}(\e_{n}x+\e_{n}\tilde{y}_{n}, \tilde{u}_{n}, \tilde{v}_{n}) \nonumber \\
&+\frac{1}{2^{*}_{s}} \left(K_{u}(\e_{n}x+\e_{n}\tilde{y}_{n}, \tilde{u}_{n}, \tilde{v}_{n})+K_{v}(\e_{n}x+\e_{n}\tilde{y}_{n}, \tilde{u}_{n}, \tilde{v}_{n})\right) 
-\left(V \tilde{u}_{n}+W \tilde{v}_{n}-\frac{\alpha}{2} \tilde{z}_{n} \right) \nonumber \\
&\leq H_{u}(\e_{n}x+\e_{n}\tilde{y}_{n}, \tilde{u}_{n}, \tilde{v}_{n})+H_{v}(\e_{n}x+\e_{n}\tilde{y}_{n}, \tilde{u}_{n}, \tilde{v}_{n}) \nonumber \\
&+\frac{1}{2^{*}_{s}} \left(K_{u}(\e_{n}x+\e_{n}\tilde{y}_{n}, \tilde{u}_{n}, \tilde{v}_{n})+K_{v}(\e_{n}x+\e_{n}\tilde{y}_{n}, \tilde{u}_{n}, \tilde{v}_{n})\right) -\frac{\alpha}{2} \tilde{z}_{n} \nonumber \\
&\leq H_{u}(\e_{n}x+\e_{n}\tilde{y}_{n}, \tilde{u}_{n}, \tilde{v}_{n})+H_{v}(\e_{n}x+\e_{n}\tilde{y}_{n}, \tilde{u}_{n}, \tilde{v}_{n}) \nonumber \\
&+\frac{1}{2^{*}_{s}} \left(K_{u}(\e_{n}x+\e_{n}\tilde{y}_{n}, \tilde{u}_{n}, \tilde{v}_{n})+K_{v}(\e_{n}x+\e_{n}\tilde{y}_{n}, \tilde{u}_{n}, \tilde{v}_{n})\right) -\frac{\alpha}{2} |(\tilde{u}_{n}, \tilde{v}_{n})|  \nonumber \\
&\leq 0 \mbox{ in } \R^{N}\setminus B_{R_{1}}. 
\end{align}
In virtue of Lemma $4.3$ in \cite{FQT}, we know that there exists $w$ such that 
\begin{align}\label{HZ1}
0<w(x)\leq \frac{C}{1+|x|^{N+2s}},
\end{align}
and
\begin{align}\label{HZ2}
(-\Delta)^{s} w+\frac{\alpha}{2}w\geq 0 \mbox{ in } \R^{N}\setminus B_{R_{1}} 
\end{align}
for some suitable $R_{2}>0$.
Choose $R_{3}=\max\{R_{1}, R_{2}\}$, and we set 
\begin{align}\label{HZ4}
a=\inf_{B_{R_{3}}} w>0 \mbox{ and } \tilde{w}_{n}=(b+1)w-a\hat{z}_{n}.
\end{align}
where $b=\sup_{n\in \mathbb{N}} \|\tilde{z}_{n}\|_{L^{\infty}(\R^{N})}<\infty$. 
Our goal is to show that 
\begin{equation}\label{HZ5}
\tilde{w}_{n}\geq 0 \mbox{ in } \R^{N}.
\end{equation}
Firstly, we observe that
\begin{align}
&\tilde{w}_{n}\geq ba+w-ba>0 \mbox{ in } B_{R_{3}} \label{HZ0},\\
&(-\Delta)^{s} \tilde{w}_{n}+\frac{\alpha}{2}\tilde{w}_{n}\geq 0 \mbox{ in } \R^{N}\setminus B_{R_{3}} \label{HZ00}.
\end{align}
Now, we argue by contradiction, and we assume that there exists a sequence $\{\bar{x}_{j, n}\}\subset \R^{N}$ such that 
\begin{align}\label{HZ6}
\inf_{x\in \R^{N}} \tilde{w}_{n}(x)=\lim_{j\rightarrow \infty} \tilde{w}_{n}(\bar{x}_{j, n})<0. 
\end{align}
By using (\ref{AM3}) and the definition of $\tilde{w}_{n}$, it is clear that $\tilde{w}_{n}(x)\rightarrow 0$ as $|x|\rightarrow \infty$, uniformly in $n\in \mathbb{N}$. Thus, we can deduce that $\{\bar{x}_{j, n}\}$ is bounded, and, up to subsequence, we may assume that there exists $\bar{x}_{n}\in \R^{N}$ such that $\bar{x}_{j, n}\rightarrow \bar{x}_{n}$ as $j\rightarrow \infty$. 
Thus, from (\ref{HZ6}), we get
\begin{align}\label{HZ7}
\inf_{x\in \R^{N}} \tilde{w}_{n}(x)= \tilde{w}_{n}(\bar{x}_{n})<0.
\end{align}
By using the minimality of $\bar{x}_{n}$ and the representation formula for the fractional Laplacian \cite{DPV}, we can see that 
\begin{align}\label{HZ8}
(-\Delta)^{s}\tilde{w}_{n}(\bar{x}_{n})=\frac{C(N, s)}{2} \int_{\R^{N}} \frac{2\tilde{w}_{n}(\bar{x}_{n})-\tilde{w}_{n}(\bar{x}_{n}+\xi)-\tilde{w}_{n}(\bar{x}_{n}-\xi)}{|\xi|^{N+2s}} d\xi\leq 0.
\end{align}
Taking into account (\ref{HZ0}) and (\ref{HZ6}), we can infer that $\bar{x}_{n}\in \R^{N}\setminus B_{R_{3}}$.
This together with (\ref{HZ7}) and (\ref{HZ8}), yield 
$$
(-\Delta)^{s} \tilde{w}_{n}(\bar{x}_{n})+\frac{\alpha}{2}\tilde{w}_{n}(\bar{x}_{n})<0,
$$
which contradicts (\ref{HZ00}).
Thus (\ref{HZ5}) holds, and by using (\ref{HZ1}) we get
\begin{align*}
\tilde{z}_{n}(x)\leq \frac{\tilde{C}}{1+|x|^{N+2s}} \mbox{ for all } n\in \mathbb{N}, x\in \R^{N},
\end{align*}
for some constant $\tilde{C}>0$. \\
Hence, recalling the definition of $\tilde{z}_{n}$, we get  
\begin{align*}
\hat{u}_{n}(x)&=u_{\e_{n}}\left(\frac{x}{\e_{n}}\right)=\tilde{u}_{n}\left(\frac{x}{\e_{n}}-\tilde{y}_{n}\right) \\
&\leq \frac{\tilde{C}}{1+|\frac{x}{\e_{n}}-\tilde{y}_{\e_{n}}|^{N+2s}} \\
&=\frac{\tilde{C} \e_{n}^{N+2s}}{\e_{n}^{N+2s}+|x- \e_{n} \tilde{y}_{\e_{n}}|^{N+2s}} \\
&\leq \frac{\tilde{C} \e_{n}^{N+2s}}{\e_{n}^{N+2s}+|x-P_{\e_{n}}|^{N+2s}}.
\end{align*}
In similar fashion, we have the estimate for $\hat{v}_{n}$. This ends the proof of \eqref{DEuv}.

\end{proof}

\section{Appendix}

In this section we give the proof of Lemma \ref{CCL}. 
Firstly, we prove some technical lemmata.
\begin{lemma}\label{lem1FS}
For any $\e>0$ there exists $C_{\e}>0$ such that 
\begin{equation}\label{FSinequality}
|K(a+b, c+d)-K(a, b)|\leq \e(|a|^{2^{*}_{s}}+|c|^{2^{*}_{s}})+C_{\e}(|b|^{2^{*}_{s}}+|d|^{2^{*}_{s}}) \mbox{ for any } a, b, c, d\in \R.
\end{equation}
\end{lemma}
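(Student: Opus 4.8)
The plan is to obtain \eqref{FSinequality} from a first-order Taylor expansion of $K$ along the segment joining $(a,c)$ to $(a+b,c+d)$, combined with the growth condition $(K3)$ and Young's inequality. (The quantity to be estimated is $K(a+b,c+d)-K(a,c)$, i.e.\ we read the second term in \eqref{FSinequality} as $K(a,c)$; this is precisely the Brezis--Lieb type splitting that is used afterwards in the proof of Lemma \ref{CCL}.)

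First I would settle a regularity point: the zero extension of $K$ to all of $\R^{2}$ is locally Lipschitz, and
\begin{equation*}
|K_{u}(u,v)|+|K_{v}(u,v)|\le c\,\bigl(|u|^{2^{*}_{s}-1}+|v|^{2^{*}_{s}-1}\bigr)\qquad\text{for all }(u,v)\in\R^{2}.
\end{equation*}
Indeed, $(K1)$ gives $K(0,0)=0$, and the $(2^{*}_{s}-1)$-homogeneity of $K_{u},K_{v}$ together with $(K4)$ and $(K5)$ forces $K_{u}\equiv K_{v}\equiv 0$ on both coordinate half-axes, so that $K$ and $\nabla K$ vanish continuously on $\partial\R^{2}_{+}$; hence the displayed bound holds globally (it is $(K3)$ inside $\R^{2}_{+}$ and trivial outside) and, $\nabla K$ being locally bounded, $K$ is Lipschitz on bounded sets. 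Consequently $t\mapsto K(a+tb,c+td)$ is absolutely continuous on $[0,1]$ and, since the segment meets the coordinate axes in a set of parameters of measure zero,
\begin{equation*}
K(a+b,c+d)-K(a,c)=\int_{0}^{1}\bigl[K_{u}(a+tb,c+td)\,b+K_{v}(a+tb,c+td)\,d\bigr]\,dt .
\end{equation*}

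Then I would estimate the integrand: using the global gradient bound, $|a+tb|\le|a|+|b|$, $|c+td|\le|c|+|d|$, and $(x+y)^{r}\le 2^{r-1}(x^{r}+y^{r})$ with $r=2^{*}_{s}-1\ge1$ (note $2^{*}_{s}>2$ since $N>2s$), one arrives at
\begin{equation*}
\bigl|K(a+b,c+d)-K(a,c)\bigr|\le C\,\bigl(|a|^{2^{*}_{s}-1}+|b|^{2^{*}_{s}-1}+|c|^{2^{*}_{s}-1}+|d|^{2^{*}_{s}-1}\bigr)\bigl(|b|+|d|\bigr)
\end{equation*}
for a constant $C=C(N,s)$. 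Expanding this product and applying Young's inequality with conjugate exponents $\tfrac{2^{*}_{s}}{2^{*}_{s}-1}$ and $2^{*}_{s}$, every term carrying a factor $|a|^{2^{*}_{s}-1}$ or $|c|^{2^{*}_{s}-1}$ is bounded by $\tau\bigl(|a|^{2^{*}_{s}}+|c|^{2^{*}_{s}}\bigr)+C_{\tau}\bigl(|b|^{2^{*}_{s}}+|d|^{2^{*}_{s}}\bigr)$ for an arbitrary $\tau>0$, while the remaining terms, depending only on $b$ and $d$, are bounded by $C'\bigl(|b|^{2^{*}_{s}}+|d|^{2^{*}_{s}}\bigr)$; choosing $\tau=\tau(\e,C)$ small enough yields \eqref{FSinequality}. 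The only mildly delicate step is the Lipschitz/regularity claim for the zero extension of $K$ across the axes, which is why I dispose of it at the outset; everything after that is a routine use of the mean value theorem and Young's inequality.
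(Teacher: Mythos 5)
Your proof is correct and follows essentially the same route as the paper's: a first-order expansion of $K$ along the segment from $(a,c)$ to $(a+b,c+d)$, the growth bound $(K3)$ on $\nabla K$ (coming from the $(2^{*}_{s}-1)$-homogeneity of $K_{u},K_{v}$), and Young's inequality with exponents $\tfrac{2^{*}_{s}}{2^{*}_{s}-1}$ and $2^{*}_{s}$; you also correctly read the paper's $K(a,b)$ as a typo for $K(a,c)$. The only difference is that you explicitly justify the expansion when the segment leaves $\R^{2}_{+}$ (where $K$ is extended by zero), using $(K4)$--$(K5)$ and homogeneity to show $K$ and $\nabla K$ vanish on the coordinate half-axes — a point the paper's mean-value-theorem step passes over silently, so this is a welcome refinement rather than a different argument.
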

\begin{proof}
By using the mean value theorem, we know that there exists $\theta\in (0, 1)$ such that
\begin{equation*}
K(a+b, c+d)-K(a, b)=(\nabla K(a+\theta b, c+\theta d),(c, d)),
\end{equation*}
where $(\cdot, \cdot)$ is the inner product in $\R^{2}$.
By using the following elementary inequality
$$
|x+\theta y|^{2^{*}_{s}-1}\leq C_{2^{*}_{s}-1}(|x|^{2^{*}_{s}-1}+|y|^{2^{*}_{s}-1}) \mbox{ for all } x, y\in \R,
$$
and the fact that $K$ is $2^{*}_{s}$-homogeneous, we can see that
\begin{align}\label{ZIO}
|K(a+b, c+d)-K(a, b)|&\leq C(|a|^{2^{*}_{s}-1}|b|+|a|^{2^{*}_{s}-1}|d|+|c|^{2^{*}_{s}-1}|b| \nonumber \\
&+|c|^{2^{*}_{s}-1}|d|+|b|^{2^{*}_{s}}+|d|^{2^{*}_{s}}+|b|^{2^{*}_{s}-1}|d|+|d|^{2^{*}_{s}-1}|b|). 
\end{align}
By using the Young inequality 
$$
x y\leq \e x^{p}+c_{\e} y^{q} \mbox{ for any } x, y\geq 0, \mbox{ with } p, q\geq 1: \frac{1}{p}+\frac{1}{q}=1
$$
to the right hand side of \eqref{ZIO}, we can obtain \eqref{FSinequality}.
\end{proof}

\begin{lemma}\label{lem8BL}
Let $\mu$ be a measure on $\R^{N}$, and assume that 
\begin{compactenum}[$(i)$]
\item $u_{n}\rightarrow u, v_{n}\rightarrow v$ a.e. in $\R^{N}$; 
\item $\int_{\R^{N}} |u_{n}|^{2^{*}_{s}} d\mu, \int_{\R^{N}} |v_{n}|^{2^{*}_{s}} d\mu\leq C$ for any $n\in \mathbb{N}$.
\end{compactenum}
Then we have
\begin{align}
\int_{\R^{N}} K(u_{n}, v_{n}) d\mu-\int_{\R^{N}} K(u_{n}-u, v_{n}-v) d\mu=\int_{\R^{N}} K(u, v) d\mu+o_{n}(1).
\end{align}
\end{lemma}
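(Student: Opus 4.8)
The plan is to prove the Brezis--Lieb type splitting identity for the $2^{*}_{s}$-homogeneous function $K$ by adapting the classical argument of Brezis and Lieb. The key algebraic input is Lemma \ref{lem1FS}: for any $\e>0$ there is $C_{\e}>0$ with
\begin{equation*}
|K(a+b, c+d)-K(a, b)|\leq \e(|a|^{2^{*}_{s}}+|c|^{2^{*}_{s}})+C_{\e}(|b|^{2^{*}_{s}}+|d|^{2^{*}_{s}}) \quad\text{for all }a, b, c, d\in \R.
\end{equation*}
First I would apply this with $a=u_{n}-u$, $b=u$, $c=v_{n}-v$, $d=v$, so that $a+b=u_{n}$, $c+d=v_{n}$, obtaining the pointwise bound
\begin{equation*}
\bigl|K(u_{n}, v_{n})-K(u_{n}-u, v_{n}-v)\bigr|\leq \e\bigl(|u_{n}-u|^{2^{*}_{s}}+|v_{n}-v|^{2^{*}_{s}}\bigr)+C_{\e}\bigl(|u|^{2^{*}_{s}}+|v|^{2^{*}_{s}}\bigr).
\end{equation*}
Then I would introduce, for each $\e>0$,
\begin{equation*}
W^{\e}_{n}:=\Bigl(\bigl|K(u_{n}, v_{n})-K(u_{n}-u, v_{n}-v)-K(u, v)\bigr|-\e\bigl(|u_{n}-u|^{2^{*}_{s}}+|v_{n}-v|^{2^{*}_{s}}\bigr)\Bigr)^{+},
\end{equation*}
which is the standard trick to isolate the part of the integrand not controlled by the (uniformly bounded in $\mu$) tails $|u_{n}-u|^{2^{*}_{s}}+|v_{n}-v|^{2^{*}_{s}}$.

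Next I would verify the two properties needed for the Dominated Convergence Theorem applied to $W^{\e}_{n}$ with respect to $\mu$. Pointwise $\mu$-a.e., from $u_{n}\to u$, $v_{n}\to v$ and continuity of $K$ we get $K(u_{n}, v_{n})-K(u_{n}-u, v_{n}-v)-K(u, v)\to K(u, v)-K(0,0)-K(u, v)=0$ while the subtracted term is nonnegative, so $W^{\e}_{n}\to 0$ a.e.\ in $\mu$. For the domination, combining the displayed pointwise bound with the triangle inequality gives
\begin{equation*}
\bigl|K(u_{n}, v_{n})-K(u_{n}-u, v_{n}-v)-K(u, v)\bigr|\leq \e\bigl(|u_{n}-u|^{2^{*}_{s}}+|v_{n}-v|^{2^{*}_{s}}\bigr)+(C_{\e}+1)\bigl(|u|^{2^{*}_{s}}+|v|^{2^{*}_{s}}\bigr),
\end{equation*}
so $0\leq W^{\e}_{n}\leq (C_{\e}+1)(|u|^{2^{*}_{s}}+|v|^{2^{*}_{s}})$, which is $\mu$-integrable (it is the a.e.\ limit of $|u_{n}|^{2^{*}_{s}}+|v_{n}|^{2^{*}_{s}}$, hence in $L^{1}(\mu)$ by Fatou and hypothesis $(ii)$, or one can argue directly). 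Hence $\int_{\R^{N}} W^{\e}_{n}\, d\mu\to 0$ as $n\to\infty$, for each fixed $\e>0$.

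Finally I would put the pieces together: for each $\e>0$,
\begin{equation*}
\Bigl|\int_{\R^{N}} K(u_{n}, v_{n})\, d\mu-\int_{\R^{N}} K(u_{n}-u, v_{n}-v)\, d\mu-\int_{\R^{N}} K(u, v)\, d\mu\Bigr|\leq \int_{\R^{N}} W^{\e}_{n}\, d\mu+\e\int_{\R^{N}} \bigl(|u_{n}-u|^{2^{*}_{s}}+|v_{n}-v|^{2^{*}_{s}}\bigr)\, d\mu.
\end{equation*}
The last integral is bounded uniformly in $n$ by a constant $C'$ (from $(ii)$ and the inequality $|x-y|^{2^{*}_{s}}\leq 2^{2^{*}_{s}-1}(|x|^{2^{*}_{s}}+|y|^{2^{*}_{s}})$ together with Fatou giving $\int |u|^{2^{*}_{s}}d\mu, \int|v|^{2^{*}_{s}}d\mu\leq C$), so taking $\limsup_{n\to\infty}$ gives $\limsup_{n}|\cdots|\leq \e C'$; letting $\e\to 0$ yields the claim. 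The only mildly delicate point is keeping the dependence of constants straight — $\e$ must be chosen before $n\to\infty$ and only sent to zero afterwards — but there is no real obstacle; everything reduces to the pointwise inequality of Lemma \ref{lem1FS} plus a routine Dominated Convergence argument.
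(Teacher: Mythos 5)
Your proof is correct and follows essentially the same route as the paper: the same application of Lemma \ref{lem1FS} with $a=u_{n}-u$, $b=u$, $c=v_{n}-v$, $d=v$, the same auxiliary function $W^{\e}_{n}$ (the paper's $w_{n,\e}$), dominated convergence, and the final $\limsup\leq C\e$ step. The only cosmetic slip is the dominating constant, which should be $M_{K}+C_{\e}$ with $M_{K}=\max\{K(u,v):|u|^{2^{*}_{s}}+|v|^{2^{*}_{s}}=1\}$ rather than $C_{\e}+1$, since $K(u,v)\leq M_{K}(|u|^{2^{*}_{s}}+|v|^{2^{*}_{s}})$ by homogeneity; this does not affect the argument.
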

\begin{proof}
We follow the arguments in \cite{BL}. Fix $\e>0$ and we denote by $C_{\e}>0$ a constant such that \eqref{FSinequality} holds.
Let us consider the following sequence
$$
\xi_{n}:=|K(u_{n}, v_{n})- K(u_{n}-u, v_{n}-v)- K(u, v)|.
$$
and in view of \eqref{FSinequality} with $a=u_{n}-u$, $b=u$, $c=v_{n}-v$, $d=v$ we can see that
$$
0\leq\xi_{n}\leq K(u, v)+\e(|u_{n}-u|^{2^{*}_{s}}+|v_{n}-v|^{2^{*}_{s}})+C_{\e}(|u|^{2^{*}_{s}}+|v|^{2^{*}_{s}}).
$$
Now, we define
$$
w_{n, \e}:=(\xi_{n}-\e(|u_{n}-u|^{2^{*}_{s}}+|v_{n}-v|^{2^{*}_{s}}))^{+}.
$$
Let us observe that 
$$
0\leq w_{n, \e}\leq (M_{K}+C_{\e})(|u|^{2^{*}_{s}}+|v|^{2^{*}_{s}})\in L^{1}(\R^{N}, d\mu)
$$
where $M_{K}:=\max\{K(u, v): |u|^{2^{*}}+|v|^{2^{*}_{s}}=1\}$, and $w_{n, \e}\rightarrow 0$ a.e. in $\R^{N}$ in view of $(i)$.
Then, by using the Dominated Convergence Theorem, we can see that
$$
\lim_{n\rightarrow \infty} \int_{\R^{N}} w_{n, \e} d\mu=0.
$$
Recalling the definition of $\xi_{n}$ and by using $(ii)$, we can deduce that
$$
0\leq\limsup_{n\rightarrow \infty}\int_{\R^{N}} \xi_{n} d\mu\leq \e \limsup_{n\rightarrow \infty} \int_{\R^{N}} (|u_{n}-u|^{2^{*}_{s}}+|v_{n}-v|^{2^{*}_{s}}) d\mu\leq C\e.
$$
From the arbitrariness of $\e>0$ we deduce the thesis.
\end{proof}

\noindent
Now, we are ready to give the proof of Lemma \ref{CCL}.
\begin{proof}[Proof of Lemma \ref{CCL}]
In order to prove \eqref{47FS}, we aim to pass to the limit in the following relation which holds in view of Lemma \ref{lem8BL}:
\begin{align}\label{55fs}
&\int_{\R^{N}} |\psi|^{2^{*}_{s}} K(u_{n}, v_{n})\, dx \nonumber\\
&= \int_{\R^{N}} |\psi|^{2^{*}_{s}} K(u, v) \, dx+ \int_{\R^{N}} |\psi|^{2^{*}_{s}} K(u_{n}-u, v_{n}-v) \, dx + o_{n}(1), 
\end{align}
where $\psi\in C^{\infty}_{c}(\R^{N})$.\\
Set $\tilde{u}_{n}=u_{n}-u$, $\tilde{v}_{n}= v_{n}-v$. Then, by Theorem \ref{Sembedding}, we can see that $\tilde{u}_{n}, \tilde{v}_{n}\rightarrow 0$ in $L^{2}_{loc}(\R^{N})$ and a.e. on $\R^{N}$.\\
Fix $\psi\in C^{\infty}_{c}(\R^{N})$. By using the definition of $\widetilde{S}_{K}$ and $(K1)$, we get
\begin{align}\label{56fs}
&\left[ \int_{\R^{N}}  |\psi|^{2^{*}_{s}} K(u_{n}-u, v_{n}-v) \, dx\right]^{\frac{2}{2^{*}_{s}}} \nonumber \\
&= \left[ \int_{\R^{N}} |\psi|^{2^{*}_{s}} K(\tilde{u}_{n}, \tilde{v}_{n}) \, dx\right]^{\frac{2}{2^{*}_{s}}}\nonumber \\
&=\left[ \int_{\R^{N}} K(\psi \tilde{u}_{n}, \psi \tilde{v}_{n}) \, dx\right]^{\frac{2}{2^{*}_{s}}}\nonumber \\
&\leq \tilde{S}^{-1}_{K} \int_{\R^{N}} (|(-\Delta)^{\frac{s}{2}} (\psi\, \tilde{u}_{n})|^{2} +  |(-\Delta)^{\frac{s}{2}} (\psi \,\tilde{v}_{n})|^{2} )\, dx\nonumber \\
&=\tilde{S}^{-1}_{K} \left[\iint_{\R^{2N}} \frac{|(\psi\tilde{u}_{n})(x)-(\psi \tilde{u}_{n})(y)|^{2}}{|x-y|^{N+2s}}+\frac{|(\psi\tilde{v}_{n})(x)-(\psi \tilde{v}_{n})(y)|^{2}}{|x-y|^{N+2s}} \, dx dy\right].
\end{align}
Now, we observe that
\begin{align*}
&\iint_{\R^{2N}} \frac{|(\psi\tilde{u}_{n})(x)-(\psi \tilde{u}_{n})(y)|^{2}}{|x-y|^{N+2s}}+\frac{|(\psi\tilde{v}_{n})(x)-(\psi \tilde{v}_{n})(y)|^{2}}{|x-y|^{N+2s}} \, dx dy \nonumber\\
&\leq 2 \Biggl(\iint_{\R^{2N}}  |\psi(y)|^{2} \frac{|\tilde{u}_{n}(x)- \tilde{u}_{n}(y)|^{2}}{|x-y|^{N+2s}}  +|\tilde{u}_{n}(x)|^{2} \frac{|\psi(x)- \psi(y)|^{2}}{|x-y|^{N+2s}} \, dx dy \nonumber\\
&+ \iint_{\R^{2N}} |\psi(y)|^{2} \frac{|\tilde{v}_{n}(x)- \tilde{v}_{n}(x)|^{2}}{|x-y|^{N+2s}}  +|\tilde{v}_{n}(x)|^{2} \frac{|\psi(x)- \psi(y)|^{2}}{|x-y|^{N+2s}}\, dx dy\Biggr). 
\end{align*}
It is easy to show that
\begin{align*}
\iint_{\R^{2N}} \frac{|\psi(x)- \psi(y)|^{2}}{|x-y|^{N+2s}} (|\tilde{u}_{n}(x)|^{2}+ |\tilde{v}_{n}(x)|^{2})\, dxdy = o_{n}(1). 
\end{align*}
Indeed, arguing as in the proof of \eqref{NIOO}, if $\psi=1$ in $B_{1}$ and $\psi=0$ in $B_{2}^{c}$ we have
\begin{align*}
&\int_{\R^{N}} \,\int_{\R^{N}} \frac{|\psi(x)- \psi(y)|^{2}}{|x-y|^{N+2s}} |\tilde{u}_{n}(x)|^{2}\, dx dy \\
&=\int_{B_{2}} \int_{\R^{N}} \frac{|\psi(x)- \psi(y)|^{2}}{|x-y|^{N+2s}} |\tilde{u}_{n}(x)|^{2} dx dy+\int_{\R^{N}\setminus B_{2}} \int_{B_{2}} \frac{|\psi(x)- \psi(y)|^{2}}{|x-y|^{N+2s}} |\tilde{u}_{n}(x)|^{2}dx dy \\
&\leq C\int_{B_{K}}  |\tilde{u}_{n}(x)|^{2}\, dx +CK^{-N} \quad  \forall K>4,
\end{align*}
and taking the limit as $n\rightarrow \infty$ and then as $K\rightarrow \infty$ we get the thesis.\\
Therefore, if we assume that $|(-\Delta)^{\frac{s}{2}} \tilde{u}_{n}|^{2}\rightharpoonup \tilde{\mu}$, $|(-\Delta)^{\frac{s}{2}} \tilde{v}_{n}|^{2}\rightharpoonup \tilde{\sigma}$ and $|K(\tilde{u}_{n}, \tilde{v}_{n})|\rightharpoonup \tilde{\nu}$ in the sense of measures, from the above facts and by passing to the limit in \eqref{56fs} we have that
\begin{align*}
\left[ \int_{\R^{N}} |\psi|^{2^{*}_{s}} d\tilde{\nu}\right]^{\frac{1}{2^{*}_{s}}} \leq C \left[ \int_{\R^{N}} |\psi|^{2} (d\tilde{\mu} + d\tilde{\sigma})\right]^{\frac{1}{2^{*}_{s}}}, \, \mbox{ for all } \psi \in C^{\infty}_{c}(\R^{N}). 
\end{align*}
Then, by using Lemma $1.2$ in \cite{Lions}, there exist at most a countable set $I$, families $\{x_{i}\}_{i\in I}\subset \R^{N}$ and $\{\nu_{i}\}_{i\in I}\subset (0, \infty)$ such that
\begin{align}\label{57fs} 
\tilde{\nu} = \sum_{i\in I} \nu_{i} \delta_{x_{i}}.
\end{align}
In view of \eqref{55fs}, we deduce that $\nu= K(u, v) + \tilde{\nu}$ which together with \eqref{57fs}, implies that 
$$
\nu= K(u, v)+ \sum_{i\in I} \nu_{i} \delta_{x_{i}},
$$
that is \eqref{47FS} holds. \\
Now, we pass to prove \eqref{50FS}. Take $\psi_{\rho}= \eta(\frac{x-x_{i}}{\rho})$, where $\eta \in C^{\infty}_{c}(B_{1})$, $0\leq \eta \leq 1$ and $\eta(0)=1$. 
Then, recalling the definition of $\widetilde{S}_{K}$ and the inequality
\begin{align*}
(x+y)^{2}\leq x^{2} + C y^{2}, \, \mbox{ for all } x, y\geq 0,
\end{align*}
we can deduce that
\begin{align}\label{58fs}
&\tilde{S}_{K} \left[ \int_{\R^{N}} |\psi_{\rho}|^{2^{*}_{s}} K(u_{n}, v_{n})\, dx \right]^{\frac{2}{2^{*}_{s}}} \nonumber\\
&\leq \int_{\R^{N}} (|(-\Delta)^{\frac{s}{2}} (\psi_{\rho} \, u_{n})|^{2} + |(-\Delta)^{\frac{s}{2}} (\psi_{\rho} \, v_{n})|^{2} ) \, dx \nonumber \\
&\leq  C \Biggl( \iint_{\R^{2N}} |u_{n}(x)|^{2} \frac{|\psi_{\rho}(x)- \psi_{\rho}(y)|^{2}}{|x-y|^{N+2s}}+|v_{n}(x)|^{2} \frac{|\psi_{\rho}(x)- \psi_{\rho}(y)|^{2}}{|x-y|^{N+2s}} \, dxdy\Biggr) \nonumber\\
&+ \Biggl(\iint_{\R^{2N}} |\psi_{\rho}(y)|^{2} \frac{|u_{n}(x)- u_{n}(y)|^{2}}{|x-y|^{N+2s}}+  |\psi_{\rho}(y)|^{2} \frac{|v_{n}(x)- v_{n}(y)|^{2}}{|x-y|^{N+2s}} \,dxdy\Biggr).
\end{align}
Now, taking into account \eqref{47FS} and \eqref{46FS}, we have
\begin{align*}
\lim_{n\rightarrow \infty}\int_{\R^{N}} |\psi_{\rho}|^{2^{*}_{s}} K(u_{n}, v_{n})\, dx=\int_{B_{\rho}(x_{j})} |\psi_{\rho}|^{2^{*}_{s}} K(u, v)\, dx + \nu_{i}.
\end{align*}
Since $(K3)$ and $0\leq\psi_{\rho}\leq 1$ imply
\begin{equation*}
\left|\int_{B_{\rho}(x_{j})} |\psi_{\rho}|^{2^{*}_{s}} K(u, v)\, dx\right|\leq C\int_{B_{\rho}(x_{j})} (|u|^{2^{*}_{s}}+|v|^{2^{*}_{s}}) \, dx\rightarrow 0 \mbox{ as } \rho\rightarrow 0,
\end{equation*}
we can deduce that
\begin{align}\label{stef1}
\lim_{\rho\rightarrow 0}\lim_{n\rightarrow \infty}\int_{\R^{N}} |\psi_{\rho}|^{2^{*}_{s}} K(u_{n}, v_{n})\, dx= \nu_{i}.
\end{align}
On the other hand, \eqref{46FS} gives 
\begin{equation*}
\lim_{n\rightarrow \infty}\iint_{\R^{2N}} |\psi_{\rho}(y)|^{2} \frac{|u_{n}(x)- u_{n}(y)|^{2}}{|x-y|^{N+2s}}=\int_{\R^{N}} |\psi_{\rho}(y)|^{2} \, d\mu
\end{equation*}
and
\begin{equation*}
\lim_{n\rightarrow \infty}\iint_{\R^{2N}} |\psi_{\rho}(y)|^{2} \frac{|v_{n}(x)- v_{n}(y)|^{2}}{|x-y|^{N+2s}}=\int_{\R^{N}} |\psi_{\rho}(y)|^{2} \, d\sigma,
\end{equation*}
and arguing as in the proof of \eqref{NIOO} in Lemma \ref{lemma3.2-1}, we can see that
\begin{align}\label{stef3}
\lim_{\rho\rightarrow 0}\lim_{n\rightarrow \infty} &\iint_{\R^{2N}} |u_{n}(x)|^{2} \frac{|\psi_{\rho}(x)- \psi_{\rho}(y)|^{2}}{|x-y|^{N+2s}} \, dx dy=0 \nonumber\\
&=\lim_{\rho\rightarrow 0}\lim_{n\rightarrow \infty}\iint_{\R^{2N}} |v_{n}(x)|^{2} \frac{|\psi_{\rho}(x)- \psi_{\rho}(y)|^{2}}{|x-y|^{N+2s}} \, dx dy.
\end{align}
Then, putting together  \eqref{58fs}, \eqref{stef1} and \eqref{stef3}, we get
\begin{align*}
\tilde{S}_{K} \nu_{i}^{\frac{2}{2^{*}_{s}}} \leq \lim_{\rho\rightarrow 0} \mu(B_{\rho}(x_{i}))+ \lim_{\rho\rightarrow 0} \sigma(B_{\rho}(x_{i})). 
\end{align*}
Setting $\mu_{i}= \lim_{\rho\rightarrow 0} \mu(B_{\rho}(x_{i}))$ and $\sigma_{i}= \lim_{\rho\rightarrow 0} \sigma(B_{\rho}(x_{i}))$, we deduce that \eqref{50FS} holds. \\
Finally we can note that
\begin{align*}
\mu\geq \sum_{i\in I} \mu_{i} \delta_{x_{i}} \mbox{ and }  \sigma\geq \sum_{i\in I} \sigma_{i} \delta_{x_{i}}
\end{align*}
and that the weak convergences imply that $\mu \geq |(-\Delta)^{\frac{s}{2}}u|^{2}$ and $\sigma \geq |(-\Delta)^{\frac{s}{2}}v|^{2}$. Then, due to the fact that $|(-\Delta)^{\frac{s}{2}}u|^{2}$ and $|(-\Delta)^{\frac{s}{2}}v|^{2}$ are ortogonal to $\sum_{i\in I} \mu_{i} \delta_{x_{i}}$ and $\sum_{i\in I} \sigma_{i} \delta_{x_{i}}$ respectively, we can infer that \eqref{48FS} and \eqref{49FS} hold. 
\end{proof}

\noindent {\bf Acknowledgements.} 
The author would like to express his sincere thanks to the anonymous referee for his/her careful reading of the manuscript and valuable comments and suggestions.\\
The paper has been carried out under the auspices of the INdAM - GNAMPA Project 2017 titled: {\it Teoria e modelli per problemi non locali}.

\end{document}